\theoremstyle{definition}
\newtheorem{thm}{Theorem}
\newtheorem*{thm*}{Theorem}
\newtheorem{prop}[thm]{Proposition} 
\newtheorem{lem}[thm]{Lemma}
\newtheorem{defi}[thm]{Definition} 
\newtheorem{cor}[thm]{Corollary}
\newtheorem{rem}[thm]{Remark}
\numberwithin{equation}{section}
\numberwithin{thm}{section}
\newcommand\myshade{85}
\colorlet{mylinkcolor}{red}
\colorlet{mycitecolor}{blue}
\colorlet{myurlcolor}{Aquamarine}
\newcommand{\ZZ}{\mathbb{Z}}
\newcommand{\CC}{\mathbb{C}}
\DeclareMathOperator{\newb}{new}
\DeclareMathOperator{\sgn}{sgn}
\DeclareMathOperator{\ct}{CT}
\newcommand{\dn}{\mathscr{D}}
\begin{document}

\date{}

\title{On certain identities involving Nahm-type sums with double poles}
\author{Shashank Kanade}
\address{Department of Mathematics, University of Denver, Denver, CO 80208}
\email{shashank.kanade@du.edu}

\author{Antun Milas}
\address{Department of Mathematics and Statistics, University at Albany - SUNY, NY 12222}
\email{amilas@albany.edu}

\author{Matthew C.\ Russell}
\address{Department of Mathematics, 
Rutgers, The State University of New Jersey,
Piscataway, NJ 08854}
\email{russell2@math.rutgers.edu}

\begin{abstract}
We prove certain Nahm-type sum representations for 
the (odd modulus) Andrews-Gordon identities, the (even modulus) Andrews-Bressoud identities, and Rogers' false theta functions. These identities are motivated on one hand by a recent work of C. Jennings-Shaffer and one of us  \cite{JenMil-double1,JenMil-double2} on double pole series, and, on the other hand, by C\'ordova, Gaiotto and Shao's work  \cite{cordova2016infrared} on defect Schur's indices. 

\end{abstract}
\maketitle

\section{Introduction and Motivation}

Nahm sums are certain $q$-hypergeometric series which have appeared in many areas including combinatorics, number theory, quantum topology, representation theory, and theoretical physics. More recently, a version of Nahm sums with ``double poles" emerged in connection to wall-crossings phenomena and 4d/2d dualities in physics. The double pole series of interest here is
 \begin{equation} \label{Nahm:d}
 \sum_{n_1,...,n_k \geq 0} \frac{q^{n_1 + \cdots + n_k+  \frac12 {\bf n} \cdot C \cdot {\bf n}^T}}{(q)^2_{n_1} \cdots (q)^2_{n_k}},
 \end{equation}
where $C$ is the incidence matrix of a graph with $k$ vertices and ${\bf n}=(n_1,...,n_k)$.  If $C$ is of ADE type, then physicists predicted that this expression is essentially Schur's index (or coefficient thereof) of a certain {\em 4d} $N=2$ Argyres-Douglas theory \cite{cordova2016schur}.  As demonstrated in the same paper, this $q$-series can be interpreted as a quantum torus-valued trace of the Kontsevich-Soibelman operator $\mathcal{O}(q)$  \cite{cordova2016schur, GaiMooNei-WCF}. 
Then powerful 4d/2d dualities allow one to obtain a new representation of (\ref{Nahm:d}) in the form of a particular vertex algebra character ({\em 2d} object), with additional Euler factors. This has led to the discovery of a new family of $q$-series identities of sum=product type. The simplest identity of this type is (here $C=A_2$)
$${(q)^2_\infty} \sum_{n_1,n_2 \geq 0} \frac{ q^{n_1+n_2+n_1 n_2}}{(q)_{n_1}^2 (q)_{n_2}^2}=\frac{1}{(q^3,q^3;q^5)_\infty},$$
where the right-hand side is the product side of the second Rogers-Ramanujan identity, which is also the vacuum character of the $(2,5)$ Virasoro minimal model.

A mathematical study of Nahm sums with higher order poles was undertaken in \cite{JenMil-double1,JenMil-double2} for specific 
 Andrews-Gordon series,  Andrews-Bressoud series, and their false theta function counterparts. In particular,  a generalization of identities discovered in \cite{cordova2016schur} was obtained.  More recently, C\'ordova, Gaiotto and Shao pushed further their method to obtain a new identity for the first Rogers-Ramanujan series (this corresponds to the non-vacuum Virasoro $(2,5)$ minimal model):
\begin{equation} \label{RR:non-vacuum}
{(q)^2_\infty} \sum_{n_1,n_2 \geq 0} \frac{(2-q^{n_1}) q^{n_1+n_2+n_1 n_2}}{(q)_{n_1}^2 (q)_{n_2}^2}=\frac{1}{(q,q^4;q^5)_\infty}.
\end{equation}
by modifying the $\mathcal{O}(q)$ operator using surface defects \cite{CorGaiSha-surfacedef}. 

The aim of this  paper is to provide a conceptual explanation of this and more general Andrews-Gordon identities, thus generalizing (\ref{RR:non-vacuum}) and \cite[Theorem 5.2]{JenMil-double1}.  We also consider other closely related series 
such as Andrews-Bressoud series and their ``false" counterparts. This way, in particular, we obtain double pole representations of characters of {\em all} $(2,2k+3)$ Virasoro minimal models 
and {\em all} $(2,4k)$ $N=1$ superconformal minimal models. 
Our main objects of study are the double-pole Nahm-type sums $\dn_{t,s}$:
	$$\sum\limits_{n_1,\dots,n_t\geq 0}\dfrac{(-w)_{n_1}q^{n_1n_2+\cdots+n_{t-1}n_t + n_1 + \cdots + n_{s-1} + a n_s+n_{s+1}+\cdots +n_k} }{(q)_{n_1}^2\cdots (q)_{n_t}^2}$$
	where we allow $ a \in \{1,2 \}$.
All identities of interest in this paper follow by easy specialization of $w$. 
Our main tool is the machinery of Bailey pairs, substantially generalizing the techniques used in \cite{JenMil-double1,JenMil-double2}. 
We also present a new approach to double pole identities based on hypergeometric summation.
	
The paper is organized as follows. 
In Section \ref{sec:prelim} we gather auxiliary results on $q$-series and basic hypergeometric summations. 
We also recall the Andrews-Gordon identities and certain identities for the unary false theta functions, 
and related $q$-difference equations. 
Results on Bailey pairs needed in the paper are presented in Section \ref{sec:Bailey}. 
In Section \ref{sec:dpns}, we first introduce the $\dn_{t,s}$ series, which are the main objects of study. 
Using  the method of quantum dilogarithm, 
we find a representation of $\dn_{t,s}(w,q)$ in the form of an ordinary Nahm sum (with single poles!) 
with some extra signs (see Propositions \ref{prop:doublepoleBailey} and \ref{prop:doublepoleBailey1}). 
In Section \ref{sec:dpnsBailey}, equipped with results from Section \ref{sec:Bailey}, we now use the Bailey pair machinery  to 
obtain a theta series type representation for $\dn_{t,s}(w,q)$ (see Proposition \ref{thm:dp-fermionic}). 
In Section \ref{sec:sum=product} we begin the analysis of various specializations in Theorem \ref{thm:dp-fermionic}. 
For an even number of summation variables specialized at $w=0$,  we get $q$-series identities for the Andrews-Gordon series (Theorem \ref{thm:doublepoleGA}), and 
Andrews-Bressoud series for $w=1$ (Theorem \ref{thm:doublepoleAB}) and $w=q^{1/2}$ (Theorem \ref{thm:doublepoleAB2}). 
For an odd number of summation variables, we prove new $q$-series representations of all unary false theta functions  
in Theorems \ref{thm:doublepoleFalseAG}, \ref{thm:doublepoleFalseAB}, \ref{thm:doublepoleFalseAB2}.  
In Section \ref{sec:alt}, we present an alternative approach to double pole identities. As an illustration of the method, we reprove several special cases from Sections \ref{sec:sum=product} and \ref{sec:ft}. We finish by outlining an agenda for future research.

{\bf Acknowledgments:} 
S.K. is currently supported by the Collaboration Grant for Mathematicians \#636937 awarded by the Simons
Foundation. A.M. was partially supported by the Collaboration Grant for Mathematicians \#709563 awarded by the Simons
Foundation.

\section{Preliminary $q$-series identities}
\label{sec:prelim}
As usual, we let $(a)_n=(a;q)_n=\prod_{i=0}^{n-1} (1-aq^{i})$ and $(a_1,...,a_k;q)_n=(a_1)_n \cdots (a_k)_n$.
Throughout, we will use the fact that if $n<0$ then 
\begin{align}
	\label{eqn:pochneg0}
	\dfrac{1}{(q)_n}=0.
\end{align}

We have the following basic relations due to Euler, see \cite[Corollary 2.2]{And-book}:
\begin{align}
	\sum_{n\geq 0}\frac{z^n}{(q)_n}=\frac{1}{(z)_{\infty}},
	\label{eqn:euler1}\\
	\sum_{n\geq 0}\frac{(-1)^nz^nq^{\binom{n}{2}}}{(q)_n}={(z)_{\infty}}
	\label{eqn:euler2}.
\end{align}
The $q$-binomial theorem \cite[Equation (II.3)]{GasRah-book} states that:
\begin{align}
	\sum_{n\geq 0}\frac{(a)_nz^n}{(q)_n}=\frac{(az)_\infty}{(z)_{\infty}}.
	\label{eqn:qbin}
\end{align}
We will also require Heine's transformations \cite[Corollary 2.3]{And-book}
\begin{align}
	\label{eqn:Heine}
	\sum_{n\geq 0} \dfrac{(a)_n(b)_n}{(q)_n(c)_n}t^n 
	=\dfrac{(b)_\infty(at)_{\infty}}{(c)_\infty(t)_\infty}
	\sum_{n\geq 0}\dfrac{(c/b)_n(t)_n}{(q)_n(at)_n}b^n
\end{align}
\begin{equation} \label{Heine}
\sum_{n \geq 0} \frac{(a)_n (b)_n z^n}{(c)_n (q)_n}=\frac{(abz/c;q)_\infty}{(z;q)_\infty} \sum_{n \geq 0} \frac{(c/a)_n (c/b)_n \left(\frac{abz}{c}\right)^n}{(c)_n (q)_n}.
\end{equation}
and Jackson's summation formula
\begin{align}
	\label{Jackson}
	\sum_{n\geq 0} \dfrac{(a)_n(b)_n}{(q)_n(c)_n}z^n 
	=\dfrac{(az;q)_\infty}{(z;q)_\infty}
	\sum_{k \geq 0}\dfrac{(a)_k (c/b)_k}{(q)_k (c)_k (az)_k}(-bz)^k q^{k(k-1)/2}.
\end{align}

We will frequently need the Jacobi triple product identity:
\begin{align}
	\sum_{n\in\ZZ}a^{\frac{n(n+1)}{2}}b^{\frac{n(n-1)}{2}}
	=(-a;ab)_\infty (-b;ab)_\infty (ab;ab)_\infty \label{eqn:jtp}.
\end{align}
Recall also Rogers' false theta function \cite{Sil-book}:
\begin{align}
	\Psi(a,b)=
	\sum_{n\in\ZZ}\sgn^*(n) a^{\frac{n(n+1)}{2}}b^{\frac{n(n-1)}{2}}
	&=\sum_{n\geq 0} a^{\frac{n(n+1)}{2}}b^{\frac{n(n-1)}{2}}(1-b^{2n+1}),
	\label{eqn:Psi}
\end{align}
where 
\begin{align*}
	\sgn^*(n)=\begin{cases}
		1 & n\geq 0\\
		-1 & n<0
	\end{cases}.	
\end{align*}

We also need the following two slight modifications of a result of Andrews, \cite[Lemma 1]{And-HKP}:
\begin{align}
	\frac{1}{(q^{\frac12}\zeta)_{\infty}(q^{\frac12}\zeta^{-1})_{\infty}}
	&=\frac{1}{(q)_{\infty}^2}
	\sum_{
		\substack{m\geq n\\n\in\ZZ}
	}(-1)^{m+n}q^{\frac{m^2+m}{2} -\frac{n^2}{2}}\zeta^n,
	\label{eqn:and1}\\
	\frac{1}{(q\zeta)_{\infty}(q\zeta^{-1})_{\infty}}
	&=\frac{1}{(q)_{\infty}^2}
	\sum_{
		\substack{m\geq n\\n\in\ZZ}
	}(-1)^{m+n}q^{\frac{m^2+m}{2} -\frac{n^2-n}{2}}\zeta^n
	(1-\zeta^{-1}).
	\label{eqn:and2}
\end{align}

We finally need the pentagon relation for the quantum dilogarithm.
If $x, y$ are non-commuting variables with $xy=qyx$ then:
\begin{align}
	(y)_{\infty}(x)_{\infty}=(x)_{\infty}(-yx)_{\infty}(y)_{\infty}.
	\label{eqn:qdilogpentagon}
\end{align}

Recall also classical Andrews-Gordon identities:
\begin{thm} \label{AG-id} For $k \geq 1$ and $0 \leq i \leq k$, we have
\begin{equation} \label{MFI:AG}
\dfrac{(q^{k-i+1}, q^{k+i+2}, q^{2k+3}\,\,;\,\,q^{2k+3})_{\infty}}{(q)_{\infty}}
=\sum_{n_1,n_2,\dotsc,n_{k} \geq 0} 
\frac{q^{N_1^2+N_2^2+\cdots+N_{k}^2+N_{k-i+1}+N_{k-i+2}+\cdots + N_{k}}}
	{ (q)_{n_1} (q)_{n_2} \cdots (q)_{n_{k-1}} (q)_{n_{k}}},
\end{equation}
where $N_t=\sum_{j \geq t} n_j$.
\end{thm}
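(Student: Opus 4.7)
The plan is to derive the Andrews–Gordon identities via the Bailey chain, which fits naturally inside the Bailey-pair framework that the paper will develop in Section~\ref{sec:Bailey}.

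First, I would begin with the unit Bailey pair relative to $a=1$ (and, for odd $i$, also the one relative to $a=q$), given by $\alpha_0=1$, $\alpha_n=0$ for $n>0$, and $\beta_n=\delta_{n,0}$. Bailey's lemma produces from any Bailey pair $(\alpha_n,\beta_n)$ a new Bailey pair $(\alpha'_n,\beta'_n)$ relative to the same $a$; the explicit formula for $\beta'_n$ inserts a factor $1/(q)_{n-m}$ summed against $q^{m^2}\beta_m$ (and against $q^{m^2+m}\beta_m$ if the auxiliary parameters are chosen to insert a linear shift). Iterating this $k$ times produces a $\beta$-side that is precisely the $k$-fold sum on the right of \eqref{MFI:AG}, the summation variables $n_1,\dots,n_k$ arising as the successive Bailey indices and the quadratic form $N_1^2+\cdots+N_k^2$ together with the linear piece $N_{k-i+1}+\cdots+N_k$ emerging from the $k$-step recursion.

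Second, on the $\alpha$-side the iterated pair collapses to a single sum of the form $\sum_{n\in\ZZ}(-1)^n q^{Q(n)}$ with a quadratic $Q(n)$ whose discriminant equals $2k+3$; wrapping this through the final application of Bailey's transform yields the multi-sum on the right of \eqref{MFI:AG} as equal to $\frac{1}{(q)_\infty}\sum_{n\in\ZZ}(-1)^n q^{Q(n)}$. An application of the Jacobi triple product identity \eqref{eqn:jtp} then rewrites the signed theta series as the infinite product $(q^{k-i+1},q^{k+i+2},q^{2k+3};q^{2k+3})_\infty$, giving the left-hand side of \eqref{MFI:AG}.

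The main technical obstacle lies in handling the dependence on $i$: different values of $i$ correspond to inserting the linear shift $N_j$ at the correct positions during the iteration, which is most cleanly accomplished through the Bailey lattice of Agarwal–Andrews–Bressoud, allowing one to toggle between bases $a=1$ and $a=q$ mid-chain so that the exponent $Q(n)$ ends up being $(2k+3)\binom{n}{2}+(k+i+1)n$, matching the Jacobi triple product input needed for the advertised product. Since this is a classical result, in the paper it suffices to cite Andrews' original argument via $q$-difference equations (showing both sides satisfy the same Rogers–Ramanujan–type recursion with matching initial conditions); the Bailey-chain derivation above is included to foreshadow, and be consistent with, the techniques used throughout the later sections.
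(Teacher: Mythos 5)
The paper offers no proof of Theorem \ref{AG-id}: it is recalled as a classical result of Andrews and Gordon, so there is no internal argument to compare yours against. The Bailey-chain/Bailey-lattice route you sketch is the standard one in the literature and is in principle a correct way to establish \eqref{MFI:AG}; your closing observation, that for this paper it suffices to cite the classical proof, is exactly what the authors do.

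That said, your sketch as written would fail at the very first step. The pair $\alpha_n=\delta_{n,0}$, $\beta_n=\delta_{n,0}$ is \emph{not} a Bailey pair: the defining relation $\beta_n=\sum_{r=0}^{n}\alpha_r/\bigl((q)_{n-r}(aq)_{n+r}\bigr)$ forces $\beta_n=1/\bigl((q)_n(aq)_n\bigr)$ once $\alpha_n=\delta_{n,0}$. The unit Bailey pair has $\beta_n=\delta_{n,0}$ together with $\alpha_n=(-1)^nq^{\binom{n}{2}}\tfrac{(a)_n}{(q)_n}\tfrac{1-aq^{2n}}{1-a}$ (read off from \eqref{eqn:Linv}); at $a=1$ this becomes $\alpha_0=1$ and $\alpha_n=(-1)^nq^{\binom{n}{2}}(1+q^n)$ for $n\geq 1$. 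This is not a cosmetic point: with $\alpha_n=\delta_{n,0}$, each application of Bailey's lemma multiplies $\alpha_n$ by $a^nq^{n^2}$ and so keeps the $\alpha$-side supported at $n=0$, leaving no theta series to feed into the triple product — which contradicts your own second paragraph. With the correct unit pair the iteration does produce the claimed signed theta series. Separately, your exponent $Q(n)=(2k+3)\binom{n}{2}+(k+i+1)n$ is off by one in $i$: applying \eqref{eqn:jtp} to $\sum_{n\in\ZZ}(-1)^nq^{Q(n)}$ with that $Q$ yields $(q^{k-i+2},q^{k+i+1},q^{2k+3};q^{2k+3})_\infty$ rather than the product in \eqref{MFI:AG}; the correct exponent is $Q(n)=\tfrac{(2k+3)n^2+(2i+1)n}{2}=(2k+3)\binom{n}{2}+(k+i+2)n$, consistent with the theta series that appears in the proof of Theorem \ref{thm:doublepoleGA}.
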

We also have identities for false theta functions due to Bringmann and one of us \cite{BM} (essentially the same identities were discovered in the  
analysis of `tails' of colored Jones polynomials of $(2,2k)$ torus knots \cite{hajij}; see also \cite{BO,KO} for related identities). 
\begin{thm} \label{false-id} For $k \in \mathbb{N}$ and $1 \leq i \leq k$, we have 
\begin{equation} \label{MFI:FT}
\frac{1}{(q)_\infty} \sum_{n \in \mathbb{Z}} {\rm sgn}^*(n) q^{(k+1)n^2+ i n}
=\sum_{n_1,n_2,\dotsc,n_{k} \geq 0} \frac{q^{N_1^2+N_2^2+\cdots+N_{k}^2+N_{k-i+1}+N_{k-i+2}+\cdots + N_{k}}}
	{(q)_{n_{k}}^2 (q)_{n_1} (q)_{n_2} \cdots (q)_{n_{k-1}}},
\end{equation}
where $N_t=\sum_{j \geq t} n_j$ as above.
Moreover, for $i=0$, we have 
\begin{equation} \label{MFI:FT1}
\frac{1}{(q)_\infty} =\sum_{n_1,n_2,\dotsc,n_{k} \geq 0} \frac{q^{N_1^2+N_2^2+\cdots+N_{k}^2}}{(q)_{n_{k}}^2 (q)_{n_1} (q)_{n_2} \cdots (q)_{n_{k-1}}}.
\end{equation}
\end{thm}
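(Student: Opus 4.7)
The plan is to prove Theorem \ref{false-id} by Bailey-pair machinery, parallel to the way Andrews--Gordon (Theorem \ref{AG-id}) is obtained by iterating the Bailey chain from the unit seed $(\delta_{n,0},\delta_{n,0})$. In the false-theta/double-pole variant, the seed Bailey pair is modified to carry the factor $(q)_n^{-2}$, which propagates to the $(q)_{n_k}^{-2}$ on the right-hand side of \eqref{MFI:FT}.

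For the $i=0$ case \eqref{MFI:FT1}, I would take the seed Bailey pair $(\alpha^{[0]}_n,\beta^{[0]}_n)=(\delta_{n,0},1/(q)_n^2)$ relative to $a=1$; this is indeed a Bailey pair since $\sum_{r=0}^{n}\delta_{r,0}/((q)_{n-r}(q)_{n+r}) = 1/(q)_n^2$. Iterating the unshifted Bailey chain $(\alpha,\beta)\mapsto (q^{n^2}\alpha_n,\sum_{r\le n}q^{r^2}\beta_r/(q)_{n-r})$ a total of $k-1$ times and applying Bailey's limit
\[
\sum_{n\ge 0} q^{n^2}\beta^{[k-1]}_n \;=\; \frac{1}{(q)_\infty}\sum_{n\ge 0}q^{n^2}\alpha^{[k-1]}_n \;=\; \frac{1}{(q)_\infty}
\]
yields \eqref{MFI:FT1} after relabeling the summation indices $n_k = n,\, n_{k-1}=r_1,\dots$.

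For $1 \le i \le k$, the seed must be richer, so that the iterated $\alpha^{[k-1]}_n$ contributes a nontrivial partial theta rather than the constant $1$. Concretely, one needs a Bailey pair relative to $a=1$ whose $\beta^{[0]}_n$ retains the double pole and whose $\alpha^{[0]}_n$ is a short signed expression, such that, after the shifted Bailey chain (with shift $s_j=1$ iff $j\ge k-i+1$, implanting the linear offset $N_{k-i+1}+\cdots+N_k$) and the final limit, the $\alpha$-side collapses via the Jacobi triple product \eqref{eqn:jtp} to $\frac{1}{(q)_\infty}\sum_{m\in\mathbb{Z}}\sgn^*(m)q^{(k+1)m^2+im}$. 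Such a seed can be read off Andrews' identities \eqref{eqn:and1}--\eqref{eqn:and2}, which equate $\frac{1}{(q^{1/2}\zeta;q)_\infty(q^{1/2}\zeta^{-1};q)_\infty}$ with a signed half-lattice sum in $\zeta$: the $\zeta^n$-coefficients on the LHS give rise to the $\beta^{[0]}$-side with double pole, while the structured sum on the RHS produces an $\alpha^{[0]}_n$ carrying the $\sgn^*$ pattern.

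The main obstacle will be pinning down the precise seed Bailey pair for each $i$ so that the cumulative shifts and quadratic corrections push the resulting $\alpha^{[0]}_n$ to the partial theta with the correct modulus $(k+1)$ and offset $i$, rather than to a neighboring one. Identities \eqref{eqn:and1}--\eqref{eqn:and2} and the Jacobi triple product \eqref{eqn:jtp} are the decisive technical inputs; a direct inductive $q$-series argument reducing the identity for $k$ to that for $k-1$ (by summing out the outermost variable $n_1$ via Heine's transformation \eqref{eqn:Heine} or the $q$-binomial theorem \eqref{eqn:qbin}) would serve as a useful cross-check.
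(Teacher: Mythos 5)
Note first that the paper contains no proof of Theorem \ref{false-id}: it is quoted as a known result, attributed to \cite{BM} and \cite{hajij}, and used later as an input (e.g.\ at the end of Section \ref{sec:alt}). So your argument has to stand on its own, and only part of it does. The $i=0$ case \eqref{MFI:FT1} you prove completely and correctly: $(\delta_{n,0},1/(q)_n^2)$ is indeed a Bailey pair relative to $a=1$, the $k-1$ unshifted iterations build exactly the multisum on the right of \eqref{MFI:FT1}, and the limiting form of Bailey's lemma closes the argument. This is a clean and legitimate proof of that (degenerate) case.

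For $1\leq i\leq k$, where the actual content of the theorem lies, there is a genuine gap. First, you never produce the seed Bailey pair; ``read it off \eqref{eqn:and1}--\eqref{eqn:and2}'' is not a construction, and identifying that pair and verifying that the iterated $\alpha$-sum equals $\sum_{n\in\ZZ}\sgn^*(n)q^{(k+1)n^2+in}$ is essentially the entire proof. (For the record, for $i=k$ the pair $\beta_n=1/(q)_n^2$, $\alpha_n=(1-q^{2n+1})q^{n^2}/(1-q)$ relative to $a=q$ does the job in a single chain.) Second, the shift pattern is not something you can impose freely within one chain: the linear terms $N_{k-i+1}+\cdots+N_k$ sit on the \emph{innermost} variables, i.e.\ on the \emph{first} $i$ steps of the iteration, and the weight $q^{r^2+r}$ corresponds to base $a=q$ while $q^{r^2}$ corresponds to base $a=1$; mixing them for $1\leq i\leq k-1$ requires a base change mid-chain (the Bailey lattice of \cite{AgaAndBre-lattice}), which modifies $\alpha$ nontrivially at the transition --- exactly the step your plan omits. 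Third, the proposed closing move via the Jacobi triple product \eqref{eqn:jtp} cannot work: a false theta function is precisely one that does \emph{not} factor as a triple product (the $\sgn^*$ obstructs it), which is why the paper invokes \eqref{eqn:jtp} only in the theta-function Theorems \ref{thm:doublepoleGA}--\ref{thm:doublepoleAB2} and not in the false-theta Theorems \ref{thm:doublepoleFalseAG}--\ref{thm:doublepoleFalseAB2}. The $\alpha$-side must be arranged to \emph{equal} the signed partial theta sum directly. A viable alternative, consistent with tools already in the paper, is induction on the $q$-difference system \eqref{false-q-diff} together with the corresponding recursions for the false theta functions, reducing everything to the $k=1$ identities of Rogers.
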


\subsection{$q$-difference equations}
Denote for $0 \leq i \leq k$, 
$$\theta_{k,i}(x,q)=\sum_{n_1, n_2, ... , n_k \geq 0} \frac{x^{N_1+\cdots + N_k} q^{N_1^2+ \cdots N_k^2+N_{k-i+1}+ \cdots N_k }}{(q)_{n_1}(q)_{n_2} \cdots (q)_{n_k}}.$$

Then we have a well-known system of $q$-difference equations
\begin{equation} \label{AG-q-diff}
	{\boldsymbol\theta}(x)=A(x,q) {\boldsymbol\theta}(qx)
\end{equation}
where
$$
{\boldsymbol \theta}(x)=\begin{pmatrix} \theta_{k,0}(x,q) \\ \theta_{k,1} (x,q) \\ ... \\ ...  \\ \theta_{k,k}(x,q) \end{pmatrix},
\quad
A(x,q)=\begin{pmatrix} 1 & xq & ... & .... & (xq)^k \\ 1 & xq & ... & ... & ...   \\  ... & ... & ... & ... & ... \\ 1 & xq & 0 & ... & .... \\   1  & 0 &0  & ... & ...& \end{pmatrix}.$$
Proof of these recursions, more precisely their inverse relations, can be found in \cite[Theorem 7.8]{And-book},  where $\theta_{k,i}(x;q)$ are denoted by $J_{k+1,k-i+1}(0;x;q)$.

For instance, for $k=1$, with $\theta_{1,0}(x,q)=\sum_{n \geq 0} \frac{x^n q^{n^2}}{(q)_n}$ and $\theta_{1,1}(x,q)=\sum_{n \geq 0} \frac{x^n q^{n^2+n}}{(q)_n}$, we get the famous
Rogers-Ramanujan recursions:
\begin{equation} \label{RR-rec}
\begin{pmatrix} \theta_{1,0}(x,q)  \\ \theta_{1,1}(x,q) \end{pmatrix}=\begin{pmatrix} 1 & xq \\ 1 & 0 \end{pmatrix} \begin{pmatrix} \theta_{1,0}(qx,q)  \\ \theta_{1,1}(qx,q) \end{pmatrix}.
\end{equation}
We also discuss $q$-difference equations for series relevant to false theta functions. For a fixed $k$, with $0 \leq i \leq k$, we let
$$\phi_{k,i}(x,q)=\sum_{n_1,n_2,\dotsc,n_{k} \geq 0} \frac{x^{N_1+\cdots + N_k} q^{N_1^2+N_2^2+\cdots+N_{k}^2+N_{k-i+1}+ \cdots + N_{k}}}
	{(q)_{n_{k}}^2 (q)_{n_1} (q)_{n_2} \cdots (q)_{n_{k-1}}}.$$
\begin{prop} We have
\begin{equation} \label{false-q-diff}
{\boldsymbol\phi}(x)=B(x,q) {\boldsymbol\phi}(qx),
\end{equation}
where
$$
{\boldsymbol\phi}(x):=\begin{pmatrix} \phi_{k,0}(x,q) \\ \phi_{k,1} (x,q) \\ ... \\ ...  \\ \phi_{k,k}(x,q) \end{pmatrix},\quad 
B(x,q):=\begin{pmatrix} k+1 & -k(1-xq) & ... & .... & -(1-xq)(xq)^{k-1} \\ k & -(k-1)(1-xq) & ... & ... & 0  \\  ... & ... & ... & ... & ... \\ 2 & -(1-xq) & 0 & ... & 0 \\   1  & 0 &0  & ... & 0 &  \end{pmatrix}.$$
\end{prop}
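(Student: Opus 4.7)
The plan is to prove the $q$-difference equation by direct manipulation, mirroring the classical proof of the Andrews-Gordon system (\ref{AG-q-diff}) while tracking the doubled Pochhammer $(q)_{n_k}^2$. The argument reduces to careful bookkeeping of signed shifts in the summation indices.

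First I would peel off the variable $m := n_k$. Setting $N'_t = n_t + \cdots + n_{k-1}$ (so $N_t = N'_t + m$ and $N'_k = 0$), a direct expansion of the quadratic and linear parts of the exponent yields
$$\phi_{k, i}(x, q) \;=\; \sum_{m \geq 0} \frac{x^{km}\,q^{km^2 + im}}{(q)_m^2}\,\theta_{k-1, \bar i}(xq^{2m}, q), \qquad \bar i := \max(i - 1,\, 0).$$
The last row $\phi_{k, k}(x, q) = \phi_{k, 0}(qx, q)$ of $B(x, q)$ follows at once from this factorization together with $\theta_{k-1, k-1}(y, q) = \theta_{k-1, 0}(qy, q)$, which is the last row of (\ref{AG-q-diff}) for $\theta_{k-1}$. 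This gives the base case and simultaneously suggests the general strategy: propagate upward in $i$ using (\ref{AG-q-diff}) on $\theta_{k-1,\bar i}(xq^{2m},q)$, keeping track of how each resulting $\theta_{k-1, j}(xq^{2m+1}, q)$ reassembles into $\phi_{k, j'}(qx, q)$.

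I would then proceed by descending induction on $i$. As a sanity warm-up, for $k = 1$, $i = 0$ the claim $\phi_{1,0}(x) = 2\phi_{1,0}(qx) - (1-xq)\phi_{1,1}(qx)$ reduces, after moving all terms to one side and shifting $n \mapsto n - 1$ in the $xq$ piece, to the term-by-term identity
$$\sum_{n\geq 0}\frac{x^n q^{n^2}(1-q^n)^2}{(q)_n^2} \;=\; \sum_{n\geq 1}\frac{x^n q^{n^2}}{(q)_{n-1}^2},$$
which holds because $(1-q^n)^2/(q)_n^2 = 1/(q)_{n-1}^2$. Crucially, the integer coefficient $2 = k + 1 - 0$ on $\phi_{1,0}(qx)$ is precisely what makes the perfect square $(1-q^n)^2$ appear. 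For general $k$ and $i$, the analogous mechanism is: applying (\ref{AG-q-diff}) with $k$ replaced by $k-1$ to $\theta_{k-1,\bar i}(xq^{2m},q)$ introduces $\theta_{k-1,j}(xq^{2m+1},q)$ factors with coefficients $(xq)^j q^{2mj}$, and the prescribed coefficients $B_{i,j}(x,q)$ conspire so that, upon substituting back into the $m$-series, all summands recombine via a $(1-q^m)^2$-style cancellation that absorbs cleanly into the doubled Pochhammer $(q)_m^2$.

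The main obstacle is the combinatorial bookkeeping that justifies the specific integer coefficient $k + 1 - i$ in the first column of $B(x,q)$: this integer must be exactly right to create the squared factor $(1-q^m)^2$ which absorbs the doubled Pochhammer, while the entries $-(k+1-i-j)(1-xq)(xq)^{j-1}$ in the remaining columns correspond to single index shifts governed by the Andrews-Gordon matrix $A(x,q)$ of (\ref{AG-q-diff}). I expect the hardest step to be verifying that, after expanding all $(1-xq)$ factors and regrouping the $m$-series by the argument $xq^{2m+s}$ inside $\theta_{k-1,\cdot}$, the resulting coefficient of each $\theta_{k-1,j}(xq^{2m+1},q)$ vanishes term-by-term in $m$. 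An alternative and likely cleaner route is to reinterpret $B(x,q)\,\boldsymbol\phi(qx)$ as the image of $\boldsymbol\phi(x)$ under a Bailey-type transform and deduce (\ref{false-q-diff}) as a formal consequence of the Bailey-pair machinery developed later in the paper.
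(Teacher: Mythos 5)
Your proposal follows essentially the same route as the paper: peel off $n_k$ to write $\phi_{k,i}(x,q)=\sum_{m\geq 0}x^{km}q^{km^2+im}(q)_m^{-2}\,\theta_{k-1,\cdot}(q^{2m}x,q)$ and then invoke the known $q$-difference system \eqref{AG-q-diff} for $\theta_{k-1}$; the only organizational difference is that you verify the rows of $B$ directly, whereas the paper verifies the equivalent ``inverse'' relations expressing ${\boldsymbol\phi}(qx)$ in terms of ${\boldsymbol\phi}(x)$ (and your index $\max(i-1,0)$ is the correct one --- the paper's displayed $\theta_{k-1,i}$ is off by one). One small caution: as your own $k=1$ computation already shows, the final cancellation is not literally ``term-by-term in $m$'' for fixed $\theta_{k-1,j}(xq^{2m+1},q)$, since the $(1-xq)$ factors shift the power of $x$; one must absorb $(1-q^m)^2$ into $(q)_m^2$, reindex $m\mapsto m-1$, and reapply the $\theta$ recursion to recombine arguments, so the bookkeeping you defer is real but routine.
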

\begin{proof} Our proof is only a slight modification of the inductive proof of (\ref{AG-q-diff}) given in \cite[Theorem 7.8]{And-book} so we omit most details. 
In order to prove (\ref{false-q-diff}), it suffices to check ``inverse" $q$-difference equations: 
\begin{align*}
 \phi_{k,0}(qx,q) & =\phi_{k,k}(x,q) \\ 
 (1-xq) \phi_{k,1}(qx,q) & =-\phi_{k,k-1}(x,q)+2\phi_{k,k}(x,q), \\
 (1-xq)(xq)\phi_{k,2}(xq,q) & =-\phi_{k,k-2}(x,q)+2\phi_{k,k-1}(x,q)-\phi_k(x,q) \\ 
 & \ldots  \ldots  \\
 (1-xq)(xq)^{k-1} \phi_{k,k}(xq,x) & =-\phi_{k,0}(x,q)+2\phi_{k,1}(x,q)-\phi_{k,2}(x,q) .
\end{align*}
The first equation obviously holds. It is convenient to write
$$\phi_{k,i}(x,q)=\sum_{n \geq 0} \frac{x^{k n} q^{k n^2+ i n}}{(q)_n^2} \theta_{k-1,i}(q^{2n}x,q).$$
Using this form, as in loc.cit., we check the remaining difference equations using the difference relations satisfied by $\theta_{k-1,i}$.
\end{proof}

For example, for $k=2$, we get 
\begin{equation} \label{false-rec}
\begin{pmatrix} \phi_0(x,q) \\ \phi_1(x,q) \\ \phi_2(x,q) \end{pmatrix}=\begin{pmatrix} 3 & -2(1-xq) & -xq(1-xq) \\ 2 & -(1-xq) & 0 \\ 1 & 0 & 0 \end{pmatrix} \begin{pmatrix} \phi_0(qx,q)  \\ \phi_1(qx,q) \\ \phi_2(qx,q)  \end{pmatrix}.
\end{equation}

\section{Results on Bailey pairs}
\label{sec:Bailey}

In this section, we recall various known results on Bailey pairs and 
also provide a few new ones.
All matrices we consider will be infinite matrices with row and column indices being $0,1,2,\dots$.

Keeping in mind \eqref{eqn:pochneg0}, we let $L(a)$ be the lower triangular Bailey matrix:
\begin{align*}
	[L(a)]_{r,c} = \dfrac{1}{(q)_{r-c}(aq)_{r+c}}.
\end{align*}
When we have $a=q$, we will simply write $L$ instead of $L(q)$.
This matrix is invertible \cite{AgaAndBre-lattice}:
\begin{align}\label{eqn:Linv}
	[L(a)^{-1}]_{r,c} = (-1)^{r-c}q^{\binom{r-c}{2}}\frac{(a)_{r+c}}{(q)_{r-c}}\frac{(1-aq^{2r})}{(1-a)}.
\end{align}

\begin{defi}
	We say that two sequences $\alpha_n$, $\beta_n$ $(n\in\ZZ_{\geq 0})$ form a Bailey pair with respect 
	to  $a$, if for all $n\in\ZZ_{\geq 0}$, 
	\begin{align*}
		\beta_n=\sum_{r=0}^n \dfrac{\alpha_r}{(q)_{n-r}(aq)_{n+r}}.
	\end{align*}
	In matrix notation, we have:
	\begin{align*}
		\beta = L(a)\cdot \alpha
	\end{align*}
	where $\alpha$ and $\beta$ are the infinite column vectors:
	\begin{align*}
		\alpha &= [\alpha_0,\alpha_1,\alpha_2,\dots]^T,\quad 
		\beta = [\beta_0,\beta_1,\beta_2,\dots]^T.
	\end{align*}
\end{defi}	

In order to deduce $q$-series identities, we shall employ the following well-established strategy.
We shall start with an initial (well-known) Bailey pair. This pair is then modified appropriately to arrive at the final Bailey pair.
Then, a requisite limit of the equation asserting that this final Bailey pair is indeed a Bailey pair will give us our desired 
$q$-series identities.

To achieve this, we will need to change $\beta$ vector so that:
\begin{align*}
	\beta^{\newb} = [\beta_0^{\newb},\beta_1^{\newb},\beta_2^{\newb},\dots]^T = M\cdot \beta,
\end{align*}
for suitable matrices $M$. 
We will always assume that in each row of $M$ there are only finitely many non-zero entries, so that products like $ML(a)$, $L(a)^{-1}ML(a)$, etc. make sense.
In this case, it is easy to see that we have:
\begin{align*}
	\beta^{\newb} = L(a)\cdot\alpha^{\newb},
\end{align*}
i.e., $\alpha^{\newb}$ and $\beta^{\newb}$ form a Bailey pair with respect to $a$, where:
\begin{align*}
	\alpha^{\newb} = [\alpha_0^{\newb},\alpha_1^{\newb},\alpha_2^{\newb},\dots]^T = L(a)^{-1}ML(a)\cdot\alpha.
\end{align*}

\begin{defi}
	Whenever $L(a)^{-1}ML(a)$ is well-defined, we will denote
it by $\widetilde{M}$. The choice of $a$ will be clear from context.
\end{defi}	

We have the following standard modifications, which we call ``forward moves''.
\begin{prop}
	If $\alpha_n$, $\beta_n$ $(n\in\ZZ_{\geq 0})$ form a Bailey pair with respect to $a=q$, then so do:
	\begin{align}
		\alpha_n^{\newb} &= 
		(-1)^nq^{\frac{n(n+1)}{2}}\alpha_n\tag{F $\alpha$} \label{move:Fa}\\
		\beta_n^{\newb} &= \sum_{r=0}^n  (-1)^r\dfrac{q^{\frac{r(r+1)}{2}}(q)_r}{(q)_n(q)_{n-r}}\beta_r.\tag{F $\beta$}\label{move:Fb}
	\end{align}
\end{prop}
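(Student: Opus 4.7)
The plan is to verify the claim using the matrix formalism set up just above the proposition. Assertion \eqref{move:Fa} simply says $\alpha^{\newb}=D\alpha$, where $D$ is the diagonal matrix with $D_{n,n}=(-1)^n q^{n(n+1)/2}$, while \eqref{move:Fb} says $\beta^{\newb}=M\beta$ with $M$ the lower-triangular matrix
\[
M_{n,r}=(-1)^r\,\frac{q^{r(r+1)/2}(q)_r}{(q)_n(q)_{n-r}}\quad (0\le r\le n),
\]
and $M_{n,r}=0$ for $r>n$. Since $\beta=L(q)\alpha$, the claim that $(\alpha^{\newb},\beta^{\newb})$ is a Bailey pair with respect to $a=q$ is equivalent to the single matrix identity $M\,L(q)=L(q)\,D$.

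Comparing $(n,s)$-entries reduces the whole proposition to the finite $q$-series identity
\[
\sum_{r=s}^{n}(-1)^r\,\frac{q^{r(r+1)/2}(q)_r}{(q)_n(q)_{n-r}(q)_{r-s}(q^2)_{r+s}}\;=\;\frac{(-1)^s q^{s(s+1)/2}}{(q)_{n-s}(q^2)_{n+s}}.
\]
I would substitute $r=s+k$, set $N:=n-s$, factor out the overall constant $(-1)^s q^{s(s+1)/2}(q)_s/[(q)_n(q)_{N}(q^2)_{2s}]$, and then use the routine identities $(q)_{N}/(q)_{N-k}=(-1)^k q^{Nk-\binom{k}{2}}(q^{-N})_k$, $(q)_{s+k}=(q)_s(q^{s+1})_k$, and $(q^2)_{2s+k}=(q^2)_{2s}(q^{2s+2})_k$ to recast the inner sum as the terminating basic hypergeometric series ${}_2\phi_1(q^{s+1},\,q^{-N};\,q^{2s+2};\,q,\,q^{n+1})$.

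The central step is then a direct application of the $q$-Chu--Vandermonde summation \cite[(II.7)]{GasRah-book},
\[
{}_2\phi_1(a,q^{-m};c;q,cq^m/a)=\frac{(c/a;q)_m}{(c;q)_m},
\]
with $a=q^{s+1}$, $c=q^{2s+2}$, $m=N$ (for which indeed $cq^m/a=q^{n+1}$). The evaluation $(q^{s+1})_N/(q^{2s+2})_N=(q)_n(q^2)_{2s}/[(q)_s(q^2)_{n+s}]$ combined with the factored-out prefactor produces precisely the right-hand side, and the identity follows. The only real obstacle is the bookkeeping of signs and $q$-powers when reducing to the standard $_2\phi_1$ form; after that, $q$-Chu--Vandermonde finishes the job. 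Alternatively, this could simply be cited as the classical Bressoud ``forward'' move on the Bailey chain (cf.\ \cite{AgaAndBre-lattice}).
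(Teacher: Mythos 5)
Your argument is correct, but it takes a genuinely different route from the paper. The paper disposes of this proposition in one line by specializing the Bailey lemma (\cite[Theorem 3.3]{And-qserbook}) at $a=q$, $\rho_1\to\infty$, $\rho_2\to q$; your proof instead re-derives this special case from scratch by reducing the claim to the matrix identity $M\,L(q)=L(q)\,D$ and evaluating the resulting terminating sum. I checked the reduction: the $(n,s)$ entry comparison does give the stated finite identity, the substitution $r=s+k$ with the three Pochhammer manipulations yields the prefactor $(-1)^sq^{s(s+1)/2}(q)_s/[(q)_n(q)_{N}(q^2)_{2s}]$ times ${}_2\phi_1(q^{s+1},q^{-N};q^{2s+2};q,q^{n+1})$ (the exponent of $q$ collapses to $k(1+s+N)=k(n+1)$ as needed), the argument $cq^m/a=q^{n+1}$ matches the $q$-Chu--Vandermonde template (II.7), and the evaluation $(q^{s+1})_N/(q^{2s+2})_N=(q)_n(q^2)_{2s}/[(q)_s(q^2)_{n+s}]$ cancels the prefactor to give exactly $[L(q)D]_{n,s}$. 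One small logical point worth making explicit: the proposition quantifies over all Bailey pairs, and since every sequence $\alpha$ yields one via $\beta=L(q)\alpha$, proving the full matrix identity $ML(q)=L(q)D$ is indeed what is required (and suffices). What your approach buys is a self-contained verification in precisely the matrix formalism the paper sets up for its other (genuinely new) transformations $\widetilde{U}$, $\widetilde{D}$, $\widetilde{S}$ — it makes the ``forward move'' uniform with those computations — at the cost of redoing a classical summation that the Bailey lemma already packages; the underlying special function input ($q$-Chu--Vandermonde versus the $q$-Pfaff--Saalsch\"utz evaluation behind Andrews' theorem) is of the same nature, so neither route is deeper than the other.
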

\begin{proof}
	We let $a=q$, $\rho_1\rightarrow\infty, \rho_2\rightarrow q$ in \cite[Theorem 3.3]{And-qserbook}.
\end{proof}

\begin{lem} Let $w\in\CC$ or be a formal variable.
	If $\alpha_n$ and $\beta_n$ form a Bailey pair with respect to $a=q$, then so do:
	\begin{align}
		\alpha^{\newb}_n &= \frac{q^{\frac{n^2+n}{2}} w^n(-w^{-1}q)_n}{(-wq)_n}\alpha_n
		\tag{Fw $\alpha$}\label{move:Fwa}\\
		\beta^{\newb}_n &= \sum_{r=0}^n\frac{(-w^{-1}q)_r w^rq^{\frac{r^2+r}{2}}}{(-wq)_n(q)_{n-r}}\beta_r
		\tag{Fw $\beta$}\label{move:Fwb}.
	\end{align}
\end{lem}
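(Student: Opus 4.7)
The plan is to apply Bailey's lemma in exactly the form invoked in the proof of the preceding proposition (namely \cite[Theorem 3.3]{And-qserbook}), but with a different choice of the two auxiliary parameters $\rho_1,\rho_2$. Recall that Bailey's lemma in this form takes as input a Bailey pair $(\alpha_n,\beta_n)$ with respect to $a$ together with $\rho_1,\rho_2$, and produces a new Bailey pair with respect to the same $a$ whose entries are explicit rational functions of $\rho_1, \rho_2, a$ and the original pair, with shifts $aq/\rho_1$, $aq/\rho_2$, $aq/(\rho_1\rho_2)$ playing the central role.

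First I would specialize $a=q$ and let $\rho_1\to\infty$, precisely as in the proof of \eqref{move:Fa}--\eqref{move:Fb}. Instead of the choice $\rho_2\to q$ used there, however, I would set $\rho_2=-w^{-1}q$. This choice is dictated by matching the target formula: with $a=q$ one has $aq/\rho_2=-wq$, which produces the desired denominator $(-wq)_n$, while simultaneously $(\rho_2)_n=(-w^{-1}q)_n$ supplies the numerator appearing in \eqref{move:Fwa}--\eqref{move:Fwb}.

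Next I would carry out the limits. Using the standard asymptotic $(\rho_1)_n\sim(-\rho_1)^n q^{\binom{n}{2}}$ as $\rho_1\to\infty$, together with $aq/(\rho_1\rho_2)=-wq/\rho_1$, a short computation gives
\[
(\rho_1)_n\,(aq/\rho_1\rho_2)^n\;\longrightarrow\; w^n q^{n(n+1)/2},
\]
while $(aq/\rho_1)_n$ and $(aq/\rho_1\rho_2)_{n-r}$ each tend to $1$ term-by-term. Substituting these limits into the two Bailey formulas, and combining with the already-identified factors $(\rho_2)_n=(-w^{-1}q)_n$ and $(aq/\rho_2)_n=(-wq)_n$, produces exactly the pair $(\alpha^{\newb}_n,\beta^{\newb}_n)$ asserted by the lemma.

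There is no serious obstacle here; the only thing that requires a bit of care is the bookkeeping of signs and powers of $q$ when combining $(-\rho_1)^n$ from $(\rho_1)_n$ with $(-wq/\rho_1)^n$ in the $\rho_1\to\infty$ limit, so that the leading power $q^n$ absorbs into $q^{\binom{n}{2}}$ to give $q^{n(n+1)/2}$. Since the statement allows $w$ to be a formal variable, one could alternatively derive the identity directly by verifying $L(q)\alpha^{\newb}=\beta^{\newb}$ using the $q$-binomial theorem \eqref{eqn:qbin}, but the limiting argument from Bailey's lemma is by far the most transparent route.
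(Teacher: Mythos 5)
Your proposal is correct and is exactly the paper's proof: the authors also obtain this lemma by taking $a=q$, $\rho_1\to\infty$, $\rho_2\to -w^{-1}q$ in \cite[Theorem 3.3]{And-qserbook}, with the same limit bookkeeping you describe.
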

\begin{proof}
	We let $a=q$, $\rho_1\rightarrow\infty, \rho_2\rightarrow -w^{-1}q$ in \cite[Theorem 3.3]{And-qserbook}.
\end{proof}

Now we shall deduce a few new transformations.

\begin{prop}\label{prop:upshift}
	Let $U$ be the up-shift matrix:
	\begin{align*}
		[U]_{r,c} =
		\begin{cases}
			1 & c=r+1, r\geq 0\\
			0 & \mathrm{otherwise}
		\end{cases}.
	\end{align*}
	We have the following formula for the entries of the matrix $\widetilde{U}$.
	For the zeroth column:
	\begin{align*}
		[\widetilde{U}]_{n,0}
		&=\begin{cases}
			\dfrac{1}{(1-q)(1-aq)} & n=0\\
			&\\
			\dfrac{-aq-q+aq^3+aq^2}{(1-q)(1-q^2)(1-aq)} & n=1\\
			&\\
			(-1)^nq^{\binom{n+1}{2}}\dfrac{(1-aq^{2n})(aq)_{n-2}}{(q)_{n+1}} & n\geq 2
		\end{cases}		
	\end{align*}
	Moreover,
	\begin{align*}
		[\widetilde{U}]_{n,n+1}
		&=\dfrac{1}{(1-aq^{2n+1})(1-aq^{2n+2})}\quad (n\geq 0)\\
		[\widetilde{U}]_{n,n}
		&=\dfrac{-(1+q)aq^{2n-1}}{(1-aq^{2n-1})(1-aq^{2n+1})}\quad (n\geq 1)\\
		[\widetilde{U}]_{n,n-1}
		&=\dfrac{a^2q^{4n-3}}{(1-aq^{2n-2})(1-aq^{2n-1})}\quad (n\geq 2).
	\end{align*}
	All other entries of $\widetilde{U}$ are zero.
\end{prop}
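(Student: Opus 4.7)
The plan is to compute $\widetilde U_{n,c} = \sum_{r} L(a)^{-1}_{n,r}\,L(a)_{r+1,c}$ directly. Substituting \eqref{eqn:Linv} and reindexing via $k = n-r$, the entry becomes
\[
\widetilde U_{n,c} = \frac{1-aq^{2n}}{1-a}\sum_{k=0}^{M}(-1)^{k} q^{\binom{k}{2}}\,\frac{(a)_{2n-k}}{(q)_{k}(q)_{n+1-c-k}(aq)_{n+1+c-k}},
\]
with $M = n-c+1$ for $c\ge 1$ and $M = n$ for $c = 0$. For the three bands $c\in\{n-1,n,n+1\}$ the sum has at most three terms, and a direct common-denominator calculation using $(a)_{2n} = (1-a)(aq)_{2n-1}$ and $(aq)_{m+1} = (aq)_m(1-aq^{m+1})$ produces the stated super-, main-, and sub-diagonal formulas.

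For the remaining entries, the key idea is to apply the Pochhammer reflections
\[
(x)_{N-k} = \frac{(x)_N}{(xq^{N-k};q)_k},\qquad (xq^{j};q)_k = (-x)^k q^{(j+k-1)k-\binom{k}{2}}(x^{-1}q^{1-j-k};q)_k
\]
to each of $(a)_{2n-k}$, $(q)_{n+1-c-k}$ and $(aq)_{n+1+c-k}$. All signs, powers of $a$, and $q^{\pm\binom{k}{2}}$ factors cancel, leaving a balanced terminating ${}_2\phi_1$ with upper parameters $q^{-(n-c+1)}$, $a^{-1}q^{-n-c-1}$, lower parameter $a^{-1}q^{1-2n}$, and argument exactly $q^{3}$. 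This value of the argument coincides with the $q$-Chu--Vandermonde value $Cq^N/B$ in the standard notation, so the sum evaluates (via a terminating consequence of \eqref{Heine} together with \eqref{eqn:qbin}) to $(q^{c-n+2};q)_{n-c+1}/(a^{-1}q^{1-2n};q)_{n-c+1}$. For $1\le c\le n-2$, the factor $(q^{c-n+2};q)_{n-c+1}$ contains $(1-q^0)=0$ (at index $j=n-c-2$), forcing $\widetilde U_{n,c}=0$.

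For the zeroth column, the summation stops one step short of the $q$-Chu--Vandermonde range ($M=n$ instead of $n+1$). We write the truncated sum as the extended ${}_2\phi_1$ sum (over $0\le k\le n+1$) minus its $k=n+1$ boundary term. For $n\ge 2$ the extended sum still vanishes (the factor $(q^{2-n};q)_{n+1}$ contains $(1-q^0)=0$), so $\widetilde U_{n,0}$ equals the negative of the boundary term times the prefactor. Evaluating this term via the same Pochhammer reflections collapses the result to $(-1)^n q^{\binom{n+1}{2}}(1-aq^{2n})(aq)_{n-2}/(q)_{n+1}$. The residual cases $n\in\{0,1\}$ are verified by direct substitution.

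The main obstacle is the careful bookkeeping of signs and $q$-powers in the Pochhammer reflections to ensure that the ${}_2\phi_1$ argument lands exactly at $q^{3}$; once this is secured, a single application of $q$-Chu--Vandermonde handles all remaining entries uniformly.
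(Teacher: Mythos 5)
Your proposal is correct and follows essentially the same route as the paper's proof: the same reindexing $k=n-r$ and Pochhammer reflections reduce each entry to a terminating ${}_2\phi_1$ with argument $q^3$, which the paper evaluates via Heine's transformation \eqref{eqn:Heine} followed by the $q$-binomial theorem \eqref{eqn:qbin} --- the same evaluation as your $q$-Chu--Vandermonde step. The treatment of the zeroth column by extending the sum one term and subtracting the boundary term is also exactly the paper's argument, so the only (cosmetic) difference is that you compute the three near-diagonal bands by hand rather than reading them off the uniform product formula.
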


\begin{proof}
In what follows, we will use the following equality which is easy to establish. For $A\geq B\geq 0$,
\begin{align}
	(a)_{A-B} = (-1)^Ba^{-B}q^{-AB+\binom{B+1}{2}}\frac{(a)_A}{(a^{-1}q^{-A+1})_B}.
	\label{eqn:pochA-B}
\end{align}

Clearly, we have, using \eqref{eqn:Linv}:
\begin{align*}
	[\widetilde{U}]_{n,c} = \frac{1-aq^{2n}}{1-a}\sum_{r\geq 0}(-1)^{n-r}q^{\binom{n-r}{2}}\frac{(a)_{n+r}}{(q)_{n-r}}\frac{1}{(q)_{r+1-c}(aq)_{r+1+c}}.
\end{align*}

For the zeroth column $c=0$, we have:
\begin{align*}
	[\widetilde{U}]_{n,0} &= \frac{1-aq^{2n}}{1-a}\sum_{r\geq 0}(-1)^{n-r}q^{\binom{n-r}{2}}\frac{(a)_{n+r}}{(q)_{n-r}}\frac{1}{(q)_{r+1}(aq)_{r+1}}
	=\frac{1-aq^{2n}}{1-a}\sum_{r=0}^n(-1)^{r}\frac{(a)_{2n-r}q^{\binom{r}{2}}}{(q)_{r}(q)_{n+1-r}(aq)_{n+1-r}},
\end{align*}
where in the second equality we have made the change $r\mapsto n-r$.
Now we have, using \eqref{eqn:pochA-B}:
\begin{align*}
	(a)_{2n-r}&=(-1)^r a^{-r}q^{-2nr+\binom{r+1}{2}}\frac{(a)_{2n}}{(a^{-1}q^{-2n+1})_r}\\
	\frac{1}{(q)_{n+1-r}}&=(-1)^r q^{r}q^{(n+1)r-\binom{r+1}{2}}\frac{(q^{-1}q^{-n-1+1})_{r}}{(q)_{n+1}}\\
	\frac{1}{(aq)_{n+1-r}}&=(-1)^r a^rq^{r}q^{(n+1)r-\binom{r+1}{2}}\frac{(a^{-1}q^{-1}q^{-n-1+1})_{r}}{(aq)_{n+1}}.
\end{align*}
Combining, we get:
\begin{align*}
	[\widetilde{U}]_{n,0} 
	&=\frac{1-aq^{2n}}{1-a}\frac{(a)_{2n}}{(q)_{n+1}(aq)_{n+1}}\sum_{r=0}^n q^{3r}\frac{(q^{-n-1})_r(a^{-1}q^{-n-1})_r}{(q)_r(a^{-1}q^{-2n+1})_r}\nonumber\\
	&=\frac{(a)_{2n+1}}{(q)_{n+1}(a)_{n+2}}
	\left(-q^{3(n+1)}\frac{(q^{-n-1})_{n+1}(a^{-1}q^{-n-1})_{n+1}}{(q)_{n+1}(a^{-1}q^{-2n+1})_{n+1}}+\sum_{r=0}^{n+1} q^{3r}\frac{(q^{-n-1})_r(a^{-1}q^{-n-1})_r}{(q)_r(a^{-1}q^{-2n+1})_r} \right)\nonumber\\
	&=\frac{(a)_{2n+1}}{(q)_{n+1}(a)_{n+2}}
	\left(-q^{3(n+1)}\frac{(q^{-n-1})_{n+1}(a^{-1}q^{-n-1})_{n+1}}{(q)_{n+1}(a^{-1}q^{-2n+1})_{n+1}}+
	\frac{(a^{-1}q^{-n+2})_{\infty} (q^{-n+2})_{\infty} }{ (a^{-1}q^{-2n+1})_{\infty} (q^3)_{\infty}}
	\right)\nonumber\\
	&=\begin{cases}
		\dfrac{1}{(1-q)(1-aq)} & n=0\\
		&\\
		\dfrac{-aq-q+aq^3+aq^2}{(1-q)(1-q^2)(1-aq)} & n=1\\
		&\\
		(-1)^nq^{\binom{n+1}{2}}\dfrac{(1-aq^{2n})(aq)_{n-2}}{(q)_{n+1}} & n\geq 2
	\end{cases}		
\end{align*}
Where in the third equality, we have used \eqref{eqn:Heine} followed by \eqref{eqn:qbin}.
For columns $c\geq 1$, the procedure is similar. 
\begin{align*}
	[\widetilde{U}]_{n,c} &= \frac{1-aq^{2n}}{1-a}\sum_{r\geq 0}(-1)^{n-r}q^{\binom{n-r}{2}}\frac{(a)_{n+r}}{(q)_{n-r}}\frac{1}{(q)_{r+1-c}(aq)_{r+1+c}}\nonumber\\
	&=\frac{1-aq^{2n}}{1-a}\sum_{r=0}^{n+1-c}(-1)^{r}q^{\binom{r}{2}}\frac{(a)_{2n-r}}{(q)_{r}}\frac{1}{(q)_{n+1-c-r}(aq)_{n+1+c-r}}\nonumber\\
	&=\frac{1-aq^{2n}}{1-a}\frac{(a)_{2n}}{(q)_{n+1-c}(aq)_{n+1+c}}\sum_{r=0}^{n+1-c}q^{3r}\frac{(q^{-n+c-1})_r(a^{-1}q^{-n-c-1})_r}{(q)_r(a^{-1}q^{-2n+1})_r}\nonumber\\
	&=\frac{(a)_{2n+1}}{(q)_{n+1-c}(a)_{n+2+c}} \frac{(a^{-1}q^{-n-c+2})_{\infty}(q^{-n+c+2})_{\infty}}{(a^{-1}q^{-2n+1})_{\infty}(q^3)_{\infty}}\nonumber\\
	&=
	\begin{cases}
		\dfrac{1}{(1-aq^{2n+1})(1-aq^{2n+2})} & c=n+1\\
		&\\
		\dfrac{-(1+q)aq^{2n-1}}{(1-aq^{2n-1})(1-aq^{2n+1})} & c=n\\
		& \\
		\dfrac{a^2q^{4n-3}}{(1-aq^{2n-2})(1-aq^{2n-1})} & c=n-1\\
		& \\
		0 & \mathrm{otherwise.}
	\end{cases}		
\end{align*}
\end{proof}

The following transformation of the Bailey pairs relative to $a=q$ 
is implied by the above proposition.
\begin{cor}
	If $\alpha_n$ and $\beta_n$ ($n\in\ZZ_{\geq 0}$) form a Bailey pair relative to $a=q$, then so do:
	\begin{align}
		\tag{U $\alpha$} \label{move:Ua}
		\alpha^{\newb}_n &= f(n) \cdot \alpha_0 + 
		g(n)\cdot  \alpha_{n-1} + 
		h(n)\cdot \alpha_{n} + 
		k(n)\cdot  \alpha_{n+1},\\
		\tag{U $\beta$} \label{move:Ub}
		\beta^{\newb}_n &= \beta_{n+1},
	\end{align}
	where:
	\begin{align*}
		\alpha_{-1} =0,
	\end{align*}
	\begin{align*}
		f(n) = 
		\begin{cases}
			\dfrac{1}{(1-q)(1-q^2)} & n=0\\
			\dfrac{-q}{(1-q)^2} & n=1\\
			(-1)^n \dfrac{q^{n(n+1)/2}(1-q^{2n+1})}{(1-q)(1-q^n)(1-q^{n+1})} & n>1
		\end{cases},
		\qquad\qquad
		g(n)  = 
		\begin{cases}
			0 & n=0,1\\
			\dfrac{q^{4n-1}}{(1-q^{2n})(1-q^{2n-1})} & n>1
		\end{cases},
	\end{align*}
	\begin{align*}
		h(n)  = 
		\begin{cases}
			0 & n=0\\
		\dfrac{-q^{2n}(1+q)}{(1-q^{2n})(1-q^{2n+2})} & n>0
		\end{cases},
		\qquad\qquad
		k(n)  = \dfrac{1}{(1-q^{2n+2})(1-q^{2n+3})}.
	\end{align*}
\end{cor}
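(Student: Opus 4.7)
The plan is to apply the general framework set up just before Proposition \ref{prop:upshift}: if $\beta^{\newb} = M \cdot \beta$ for a matrix $M$ with only finitely many nonzero entries in each row, then $(\alpha^{\newb}, \beta^{\newb})$ is a Bailey pair with respect to $a$ where $\alpha^{\newb} = \widetilde{M} \cdot \alpha = L(a)^{-1} M L(a) \cdot \alpha$. The desired move is precisely the choice $M = U$ (the up-shift matrix of Proposition \ref{prop:upshift}) and $a = q$. Indeed, $U \cdot \beta = [\beta_1, \beta_2, \beta_3, \dots]^T$, which is exactly the claim $\beta^{\newb}_n = \beta_{n+1}$ in \eqref{move:Ub}.

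It remains to identify the entries of $\widetilde{U}$ with $a=q$. By Proposition \ref{prop:upshift}, the only nonzero columns of $\widetilde{U}$ in row $n$ are $c=0$ and (for $n\geq 1,2$ respectively) $c = n-1, n, n+1$. Consequently,
\[
\alpha^{\newb}_n = [\widetilde{U}]_{n,0}\,\alpha_0 + [\widetilde{U}]_{n,n-1}\,\alpha_{n-1} + [\widetilde{U}]_{n,n}\,\alpha_n + [\widetilde{U}]_{n,n+1}\,\alpha_{n+1},
\]
which has precisely the shape of \eqref{move:Ua} (with the convention $\alpha_{-1}=0$). So the content of the corollary is simply that, under $a = q$, the four entries read off from Proposition \ref{prop:upshift} coincide with $f(n), g(n), h(n), k(n)$ respectively.

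The verification is then routine algebra. Setting $a=q$:
$[\widetilde{U}]_{n,n+1} = \frac{1}{(1-q^{2n+2})(1-q^{2n+3})} = k(n)$; $[\widetilde{U}]_{n,n} = \frac{-(1+q)q^{2n}}{(1-q^{2n})(1-q^{2n+2})} = h(n)$ for $n\geq 1$; $[\widetilde{U}]_{n,n-1} = \frac{q^{4n-1}}{(1-q^{2n-1})(1-q^{2n})} = g(n)$ for $n\geq 2$. For the zeroth column the cases $n=0$ and $n=1$ are plain substitution: the $n=1$ numerator $-q^2 - q + q^4 + q^3$ factors as $-q(1-q)(1+q)^2$ and cancels against $(1-q)(1-q^2)^2$ in the denominator to give $\frac{-q}{(1-q)^2} = f(1)$. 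For $n\geq 2$, using $(q^2;q)_{n-2} = (q;q)_{n-1}/(1-q)$, one rewrites $(-1)^n q^{\binom{n+1}{2}}\frac{(1-q^{2n+1})(q^2)_{n-2}}{(q)_{n+1}}$ as $(-1)^n \frac{q^{n(n+1)/2}(1-q^{2n+1})}{(1-q)(1-q^n)(1-q^{n+1})} = f(n)$.

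The main obstacle, to the extent there is one, is bookkeeping of the special small-$n$ cases for the zeroth column of $\widetilde{U}$, since the proposition gives three separate formulas for $n=0,1,\geq 2$ and these must be matched with the piecewise definition of $f(n)$. But each case is a one-line $q$-algebra simplification of the kind shown above, so there is no conceptual obstruction beyond invoking Proposition \ref{prop:upshift} at $a=q$.
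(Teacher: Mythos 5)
Your proposal is correct and is exactly the paper's route: the corollary is stated as being ``implied by'' Proposition \ref{prop:upshift}, i.e.\ one sets $a=q$ in $\widetilde{U}=L(q)^{-1}UL(q)$ and reads off the entries, with the piecewise definitions of $f,g,h,k$ absorbing the small-$n$ special cases. Your algebraic verifications (including the factorization of the $n=1$ numerator and the simplification $(q^2;q)_{n-2}=(q)_{n-1}/(1-q)$) check out.
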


\begin{rem}\label{rem:LovjoyOsburn-upshift}
	A transformation very similar to $(\alpha_n^{\newb},\beta_n^{\newb})$ was found in \cite[Theorem 1.2]{LovOsb-mockdouble}.
	This transformation says that if $\alpha,\beta$ form a Bailey pair relative to $a=1$ such that $\alpha_0=\beta_0=0$,
	then, the following form a
	Bailey pair relative to $a=q$:
	\begin{align*}
	 \alpha^{\newb}_n = \dfrac{1}{1-q}	\left( \dfrac{\alpha_{n+1}}{1-q^{2n+2}} - \dfrac{q^{2n}\alpha_n}{1-q^{2n}} \right),\quad \beta^{\newb}_n &= \beta_{n+1}.
	\end{align*}
\end{rem}

\begin{prop}
	Let $D$ be the infinite diagonal matrix:
	\begin{align*}
		D=\mathrm{Diag}\{q^n\,\vert\, n\geq 0\}.
	\end{align*}		
	Then, we have that:
	\begin{align}
		[\widetilde{D}]_{r,c}
		=
		\begin{cases}
			q^r & r=c\\
			a^{r-c-1}q^{r^2-r-c^2}(aq^{2r}-1) & r>c \\
			0 & \mathrm{otherwise}.
		\end{cases}\label{eqn:Dtilde}
	\end{align}
\end{prop}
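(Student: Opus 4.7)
The plan is to compute the entries of $\widetilde{D}=L(a)^{-1}DL(a)$ directly. Since both $L(a)$ and $L(a)^{-1}$ are lower triangular,
$$[\widetilde{D}]_{r,c}=\sum_{k=c}^{r}[L(a)^{-1}]_{r,k}\,q^k\,[L(a)]_{k,c},$$
which vanishes automatically when $r<c$. For the diagonal case $r=c$, only the single term $k=c$ contributes, and using $[L(a)^{-1}]_{r,r}=(aq)_{2r}$ together with $[L(a)]_{r,r}=1/(aq)_{2r}$ gives $[\widetilde{D}]_{r,r}=q^r$ as claimed.

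The substantial case is $r>c$. Setting $N=r-c$ and substituting $k=c+j$ with $0\le j\le N$, I would pull out the $j$-independent Pochhammer factors via $(a)_{r+c+j}=(a)_{r+c}(aq^{r+c})_j$ and $(aq)_{2c+j}=(aq)_{2c}(aq^{2c+1})_j$, then use the reflection identity $1/(q)_{N-j}=(-1)^jq^{Nj-\binom{j}{2}}(q^{-N})_j/(q)_N$ (a direct consequence of \eqref{eqn:pochA-B}, applied similarly in the proof of Proposition \ref{prop:upshift}). Gathering $q$-exponents via $\binom{N-j}{2}-\binom{j}{2}+j+Nj=\binom{N}{2}+2j$, the $j$-sum reduces to the basic hypergeometric series
$${}_2\phi_1\!\left(q^{-N},\,aq^{r+c};\,aq^{2c+1};\,q,\,q^2\right).$$

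To evaluate this series, apply Euler's transformation \eqref{Heine}: with parameters corresponding to $(q^{-N},aq^{r+c},aq^{2c+1},q^2)$, one computes $abz/c=q^{r-c+1-N}=q$, so the prefactor $(abz/c;q)_\infty/(z;q)_\infty$ equals $(q;q)_\infty/(q^2;q)_\infty=1-q$, and the transformed series is the terminating
$${}_2\phi_1\!\left(aq^{2c+N+1},\,q^{1-N};\,aq^{2c+1};\,q,\,q\right),$$
which is summable by the terminating $q$-Chu--Vandermonde identity ${}_2\phi_1(q^{-n},b;c;q,q)=(c/b)_nb^n/(c)_n$ with $n=N-1$, giving $(q^{-N})_{N-1}(aq^{2c+N+1})^{N-1}/(aq^{2c+1})_{N-1}$.

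The main technical task is exponent bookkeeping. Using once more the reflection $(q^{-N})_{N-1}=(-1)^{N-1}q^{-(N-1)(N+2)/2}(q)_N/(1-q)$ and the simplification $(a)_{r+c}/(aq)_{2c}=(1-a)(aq^{2c+1})_{N-1}$ (valid for $r>c$), the Pochhammer factors cancel cleanly, the two copies of $(q)_N$ cancel, and the remaining $q$-exponent
$$c+\binom{N}{2}-\frac{(N-1)(N+2)}{2}+(N-1)(2c+N+1)=c+(N-1)(2c+N)=r^2-r-c^2,$$
combined with the powers of $a$ from $(aq^{2c+N+1})^{N-1}$ and an overall sign from $(-1)^N(-1)^{N-1}=-1$ matched against the factor $(1-aq^{2r})$, produces $[\widetilde{D}]_{r,c}=a^{r-c-1}q^{r^2-r-c^2}(aq^{2r}-1)$. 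The architecture parallels that of the proof of Proposition \ref{prop:upshift}, with $q^2$ in place of the argument $q^3$ there.
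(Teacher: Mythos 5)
Your computation is correct, and it reaches \eqref{eqn:Dtilde} by a genuinely different route than the paper. The paper does not evaluate $L(a)^{-1}DL(a)$ directly: it first identifies $L(aq)^{-1}D^{-1}L(a)$ and $L(aq)^{-1}L(a)$ with explicit triangular matrices by specializing two Bailey-lattice transformations of Lovejoy (letting $b\to\infty$ in his equations (2.4)--(2.5), and $k=1$, $d_1\to 0$ in his Theorem 2.3), then inverts and multiplies to obtain $L(a)^{-1}D^{-1}L(a)$, and finally checks that the matrix in \eqref{eqn:Dtilde} is its inverse. You instead compute $[\widetilde{D}]_{r,c}=\sum_k [L(a)^{-1}]_{r,k}q^k[L(a)]_{k,c}$ head-on, reducing the $r>c$ case via \eqref{eqn:pochA-B} to the terminating series ${}_2\phi_1(q^{-N},aq^{r+c};aq^{2c+1};q,q^2)$ with $N=r-c$, transforming it by \eqref{Heine} (the key observation being $abz/c=q$, so the prefactor degenerates to $1-q$), and summing by $q$-Chu--Vandermonde; I have checked the bookkeeping (the exponent $c+(N-1)(2c+N)=r^2-r-c^2$, the sign $(-1)^{2N-1}=-1$ against $1-aq^{2r}$, and the cancellation $(a)_{r+c}/(aq)_{2c}=(1-a)(aq^{2c+1})_{N-1}$) and it is all correct, including the boundary case $N=1$ where the transformed series collapses to its $n=0$ term. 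What your approach buys is self-containedness and uniformity: it runs in exact parallel with the paper's proof of Proposition \ref{prop:upshift} (with argument $q^2$ in place of $q^3$) and needs no external Bailey-lattice input, only the standard $q$-Chu--Vandermonde summation (Gasper--Rahman (II.6)), which is not among the paper's listed preliminaries but is elementary. What the paper's approach buys is that it sidesteps the hypergeometric evaluation entirely by quoting known Bailey-pair transformations, at the cost of an extra inversion step and reliance on \cite{Lov-lattice}.
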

\begin{proof}
	Letting $b \to \infty$ in \cite[Equations (2.4)-(2.5)]{Lov-lattice}
	we see that if $\alpha_n$, $\beta_n$ $(n\in\ZZ_{\geq 0})$
	form a Bailey pair with respect to base $a$ then the following form a Bailey pair with respect to base $aq$:
	\begin{align*}
		\alpha^\ast_n =\frac{1-aq^{2n+1}}{1-aq}q^{-n}\sum_{r=0}^n\alpha_r, \quad \beta^\ast_n = q^{-n}\beta_n.
	\end{align*}
	In matrix notation, this means that for all vectors $\alpha$,
	\begin{align*}
		L(aq)^{-1}D^{-1}L(a)\cdot \alpha = M\cdot \alpha
	\end{align*}
	where 
	\begin{align*}
		[M]_{r,c}=
		\begin{cases}
			\dfrac{1-aq^{2r+1}}{1-aq}q^{-r} & 0\leq r\leq c,\\
			&\\
			0 & \mathrm{otherwise}.
		\end{cases}			
	\end{align*}
	However, since $L(aq)^{-1}D^{-1}L(a)$ is an invertible matrix (a product of three lower triangular matrices each having non-zero diagonal entries),
	we may in fact conclude that $L(aq)^{-1}D^{-1}L(a)=M$.

	Further, letting $k=1$ and $d_1\rightarrow 0$ in \cite[Theorem 2.3]{Lov-lattice} we see that if $\alpha_n$, $\beta_n$ $(n\in\ZZ_{\geq 0})$
	form a Bailey pair with respect to base $a$ then the following form a Bailey pair with respect to base $aq$:
	\begin{align*}
		\alpha^\ast_n &=\sum_{r=0}^na^{n-r}q^{n^2-r^2}\alpha_r,
		\quad 
		\beta^\ast_n = \beta_n.
	\end{align*}
	By the same logic as above, this implies that:
	\begin{align*}
		[L(aq)^{-1}L(a)]_{r,c} = 
		\begin{cases}
			\dfrac{1-aq^{2r+1}}{1-a}a^{r-c}q^{r^2-c^2} & 0\leq r\leq c \\
			&\\
			0 & \mathrm{otherwise}.
		\end{cases}
	\end{align*}
	It can now be checked by direct multiplication that
	we have the following inverse of $L(aq)^{-1}L(a)$:
	\begin{align*}
		[L(a)^{-1}L(aq)]_{r,c}=
		\begin{cases}
			\dfrac{1-aq}{1-aq^{2r+1}} &  r=c\,\, (c\geq 0)\\
			&\\
			-\dfrac{(1-aq)aq^{2r-1}}{1-aq^{2r-1}} &  r=c+1\,\, (c\geq 0)\\
			&\\
			0 &\mathrm{otherwise}
		\end{cases}			
	\end{align*}
	We may now find by direct multiplication that:
	\begin{align*}
		[L(a)^{-1}D^{-1}L(a)]_{r,c}
		&=[L(a)^{-1}L(aq)\cdot L(aq)^{-1}D^{-1}L(a)]_{r,c}\nonumber\\
		&=\sum_{j=c}^r[L(a)^{-1}L(aq)]_{r,j}\cdot [L(aq)^{-1}D^{-1}L(a)]_{j,c}\nonumber\\
		&=\begin{cases}
			q^{-r} & 0\leq r=c\\
			q^{-r}(1-aq^{2r}) & r>c\\
			0 &\mathrm{otherwise}
		\end{cases}			
	\end{align*}
	One may again verify by direct calculation that this is inverse to the matrix given in 
	\eqref{eqn:Dtilde}.
\end{proof}

\begin{rem}
	Clearly, the result above implies a certain transformation of Bailey pairs relative to $a$
	where $\beta$ and $\beta^{\newb}$ are related by the diagonal matrix $D$.
	A transformation of a very similar flavour, where $\beta^{\newb}$ and $\beta$ are again related by a diagonal matrix, namely,
	$\beta^{\newb}_n = \beta_n/(1-q^{2n+1})$, was discovered in \cite[Theorem 1.3]{LovOsb-mockdouble}.
\end{rem}	

\begin{prop}
	Let $I$ be the infinite Identity matrix. Define:
	\begin{align*}
		S = U\cdot(I-D)^2=(I-qD)^2\cdot U=(I-qD)\cdot U\cdot (I-D).
	\end{align*}
	Equivalently,
	\begin{align*}
		[S]_{r,c} =
		\begin{cases}
			(1-q^{r+1})^2 & c=r+1, r\geq 0\\
			0 & \mathrm{otherwise}
		\end{cases}.
	\end{align*}
	Then, $\widetilde{S}$ is the tri-diagonal matrix given by:
	\begin{align*}
		[\widetilde{S}]_{r,c}=
		\begin{cases}
			\dfrac{q^{2r-1}(1-aq^{r-1})^2}{(1-aq^{2r-2})(1-aq^{2r-1})} & c=r-1, r\geq 1  \\
			& \\
			\dfrac{q^r(2aq^{2r}-aq^r-q^{r+1}-aq^{r-1}-q^{r}+2)}{(1-aq^{2r-1})(1-aq^{2r+1})} & c=r, r\geq 0 \\
			&\\
			\dfrac{(1-q^{r+1})^2}{(1-aq^{2r+1})(1-aq^{2r+2})} & c=r+1, r\geq 0\\
			&\\
			0 & \mathrm{otherwise}.
		\end{cases}			
	\end{align*}
\end{prop}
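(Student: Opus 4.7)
The plan is to exploit the fact that the tilde operation $M \mapsto \widetilde{M} = L(a)^{-1}\,M\,L(a)$ is similarity conjugation, hence both additive and multiplicative: $\widetilde{MN} = \widetilde{M}\,\widetilde{N}$ and $\widetilde{M+N} = \widetilde{M} + \widetilde{N}$ whenever the relevant products are well-defined. Applying this to the factorization $S = (I - qD)\,U\,(I - D)$ (an elementary check from the defining entries of $U$ and $D$) gives
\[
\widetilde{S} \;=\; (I - q\widetilde{D})\,\widetilde{U}\,(I - \widetilde{D}),
\]
and the entries of $\widetilde{U}$ and $\widetilde{D}$ are already recorded in Proposition~\ref{prop:upshift} and equation~\eqref{eqn:Dtilde}.

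To read off $[\widetilde{S}]_{r, c}$, expand
\[
[\widetilde{S}]_{r, c} \;=\; \sum_{j \le r,\; k \ge c} [I - q\widetilde{D}]_{r, j}\,[\widetilde{U}]_{j, k}\,[I - \widetilde{D}]_{k, c},
\]
using that $\widetilde{D}$ is lower triangular with diagonal $q^r$. For $c \ge 1$ the restriction $k \ge c \ge 1$ excludes the anomalous column $k = 0$ of $\widetilde{U}$, so only the tri-diagonal part $|k - j| \le 1$ of $\widetilde{U}$ contributes; combined with $j \le r$, $k \ge c$, this forces $|r - c| \le 1$, and the three claimed tri-diagonal entries follow from tabulating the at-most-six surviving $(j, k)$ pairs together with the formula $[\widetilde{D}]_{k+1, k} = q^k(aq^{2k+2} - 1)$.

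For $c = 0$ every contribution from the anomalous column of $\widetilde{U}$ is annihilated by the identity $[I - \widetilde{D}]_{0, 0} = 1 - q^0 = 0$, so once again only the tri-diagonal part of $\widetilde{U}$ enters. A direct calculation, or the telescoping intermediate $\widetilde{U}\,(I - \widetilde{D}) = \widetilde{U(I - D)}$, then produces the correct entries at $r = 0, 1$ and vanishing at $r \ge 2$; one verifies in particular that the tri-diagonal formula at $r = 0$ simplifies to $(1 - q)/(1 - aq)$ after elementary algebra.

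The main obstacle is the vanishing $[\widetilde{S}]_{r, 0} = 0$ for $r \ge 2$: the sum over $j \le r$ and $k \ge 1$ must cancel identically, which is not apparent from the multiplicative decomposition alone. One route is a hypergeometric simplification in the spirit of Proposition~\ref{prop:upshift}. A second, probably cleaner, route is to bypass multiplicativity entirely and compute $[\widetilde{S}]_{r, c}$ directly from $[\widetilde{S}]_{r, c} = \sum_k [L(a)^{-1}]_{r, k}\,(1 - q^{k+1})^2/\bigl((q)_{k+1-c}(aq)_{k+1+c}\bigr)$, evaluating the resulting series by Heine's transformation~\eqref{eqn:Heine} followed by the $q$-binomial theorem~\eqref{eqn:qbin}; the extra factor $(1 - q^{k+1})^2$, relative to the analogous sum in the proof of Proposition~\ref{prop:upshift}, is exactly what causes all outside-tridiagonal entries to collapse to zero.
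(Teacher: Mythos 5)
Your overall strategy --- conjugating the factorization $S=(I-qD)\,U\,(I-D)$ and assembling $\widetilde{S}=(I-q\widetilde{D})\,\widetilde{U}\,(I-\widetilde{D})$ from the already-computed $\widetilde{U}$ and $\widetilde{D}$ --- is exactly the paper's, and your handling of the anomalous zeroth column of $\widetilde{U}$ (annihilated because $[I-\widetilde{D}]_{0,0}=0$) matches the paper's observation. The genuine gap is your claim that for $c\ge 1$ the constraints $j\le r$, $|j-k|\le 1$, $k\ge c$ force $|r-c|\le 1$. They only force $c\le k\le j+1\le r+1$, i.e.\ $c\le r+1$; they place no upper bound on $r-c$. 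For instance, with $r=3$, $c=1$ the pair $(j,k)=(2,1)$ contributes $[I-q\widetilde{D}]_{3,2}\,[\widetilde{U}]_{2,1}\,[I-\widetilde{D}]_{1,1}$, a product of three nonzero quantities, and many other pairs contribute as well; the entry $[\widetilde{S}]_{3,1}$ vanishes only because the whole sum cancels. This below-the-subdiagonal cancellation is precisely where the paper does its real work: it first shows $\widetilde{U}\cdot(I-\widetilde{D})$ is genuinely tri-diagonal via an explicit three-term cancellation in the case $c<r-1$, and then verifies a second three-term cancellation for $r>c+1$ after multiplying on the left by the fully lower-triangular factor $I-q\widetilde{D}$. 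As written, your argument establishes the three claimed nonzero bands and the vanishing above the superdiagonal, but not the vanishing below the subdiagonal for $c\ge 1$; you flag the analogous issue only for the column $c=0$.

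A smaller point: your fallback for $c=0$ (evaluating $\sum_k [L(a)^{-1}]_{r,k}(1-q^{k+1})^2/\bigl((q)_{k+1}(aq)_{k+1}\bigr)$ by Heine's transformation and the $q$-binomial theorem) is plausible and in the spirit of the proof of Proposition~\ref{prop:upshift}, but it is only a sketch: the extra factor $(1-q^{k+1})^2$ means the sum is no longer of the ${}_2\phi_1$ shape used there, so it would have to be absorbed into the Pochhammer symbols or split off before those transformations apply. If you either carry out the two cancellations for $r>c+1$ (as the paper does) or execute the direct hypergeometric computation for all columns, the proof goes through.
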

\begin{proof}
	For convenience, let us denote 
	\begin{align*}
		U_1 = U\cdot(I-D).
	\end{align*}
	We first calculate $\widetilde{U_1}=
	\widetilde{U}\cdot (I-\widetilde{D})$.
	Now, note that $\widetilde{U}$ has non-zero entries in the first column, but it is otherwise a tri-diagonal matrix.
	However, note that the first row of $I-\widetilde{D}$ is entirely $0$. 
	Thus, we may completely ignore the non-zero entries in the first column of $\widetilde{U}$.
	
	Let $r\geq 2$.
	Then we have:
	\begin{align*}
		&[\widetilde{U_1}]_{r,c} 
		= [\widetilde{U}]_{r,r-1}\cdot  [I-\widetilde{D}]_{r-1,c}
		+[\widetilde{U}]_{r,r}\cdot  [I-\widetilde{D}]_{r,c} 
		+[\widetilde{U}]_{r,r+1}\cdot  [I-\widetilde{D}]_{r+1,c}.
	\end{align*}
	Clearly, this expression is $0$ if $c>r+1$.
	If $c=r+1$, we may directly substitute various formulas deduced above to get:
	\begin{align*}
		[\widetilde{U_1}]_{r,r+1} &= [\widetilde{U}]_{r,r+1}\cdot  [I-\widetilde{D}]_{r+1,r+1}
		=\dfrac{1-q^{r+1}}{(1-aq^{2r+1})(1-aq^{2r+2})}.
	\end{align*}		
	Similarly, for $c=r$ we get:
	\begin{align*}
		[\widetilde{U_1}]_{r,r} 
		&= [\widetilde{U}]_{r,r}\cdot  [I-\widetilde{D}]_{r,r}
		+[\widetilde{U}]_{r,r+1}\cdot  [I-\widetilde{D}]_{r+1,r}\nonumber\\
		&=\dfrac{-(1+q)aq^{2r-1}}{(1-aq^{2r-1})(1-aq^{2r+1})} (1-q^r) + 
		\dfrac{q^{r}(1-aq^{2r+2})}{(1-aq^{2r+1})(1-aq^{2r+2})}\nonumber\\
		&=\dfrac{aq^{3r}-aq^{2r}-aq^{2r-1}+q^r}{(1-aq^{2r-1})(1-aq^{2r+1})}.
	\end{align*}		
	For $c=r-1$ we get:
	\begin{align*}
		[\widetilde{U_1}]_{r,r-1} &= [\widetilde{U}]_{r,r-1}\cdot  [I-\widetilde{D}]_{r-1,r-1}
		+[\widetilde{U}]_{r,r}\cdot  [I-\widetilde{D}]_{r,r-1}
		+[\widetilde{U}]_{r,r+1}\cdot  [I-\widetilde{D}]_{r+1,r-1}\nonumber\\
		&=\frac{a^2q^{4r-3}(1-q^{r-1})}{(1-aq^{2r-2})(1-aq^{2r-1})}
		-\frac{(1+q)aq^{2r-1}(1-aq^{2r})q^{r-1}}{(1-aq^{2r-1})(1-aq^{2r+1})}
		+\frac{aq^{3r-1}(1-aq^{2r+2})}{(1-aq^{2r+1})(1-aq^{2r+2})}\nonumber\\
		&=\frac{aq^{3r-2}(aq^{r-1}-1)}{(1-aq^{2r-2})(1-aq^{2r-1})}.
	\end{align*}	
	If $c<r-1$ we have:
	\begin{align*}
		[\widetilde{U_1}]_{r,c} 
		&= [\widetilde{U}]_{r,r-1}\cdot  [I-\widetilde{D}]_{r-1,c}
		+[\widetilde{U}]_{r,r}\cdot  [I-\widetilde{D}]_{r,c}
		+[\widetilde{U}]_{r,r+1}\cdot  [I-\widetilde{D}]_{r+1,c}\nonumber\\
		&= \frac{a^2q^{4r-3}a^{r-c-2}q^{r^2-3r+2-c^2}(1-aq^{2r-2})}{(1-aq^{2r-2})(1-aq^{2r-1})}
		- \frac{(1+q)aq^{2r-1}a^{r-c-1}q^{r^2-r-c^2}(1-aq^{2r})}{(1-aq^{2r-1})(1-aq^{2r+1})}\nonumber\\
		&\quad+ \frac{a^{r-c}q^{r^2+r-c^2}(1-aq^{2r+2})}{(1-aq^{2r+1})(1-aq^{2r+2})}\nonumber\\
		&=0.
	\end{align*}	

	One may now explicitly calculate the entries for rows $r=0, 1$ and see that they follow the same pattern.
	
	Concluding, we have that the matrix $\widetilde{U_1}$ is a tri-diagonal matrix with:
	\begin{align*}
		[\widetilde{U_1}]_{r,c}
		=\begin{cases}
			\dfrac{aq^{3r-2}(aq^{r-1}-1)}{(1-aq^{2r-2})(1-aq^{2r-1})} & c=r-1, r\geq 1,\\
			& \\
			\dfrac{aq^{3r}-aq^{2r}-aq^{2r-1}+q^r}{(1-aq^{2r-1})(1-aq^{2r+1})} & c=r, r\geq 0,\\		
			& \\
			\dfrac{1-q^{r+1}}{(1-aq^{2r+1})(1-aq^{2r+2})} & c=r+1, r\geq 0,\\
			& \\
			0 & \mathrm{otherwise}.
		\end{cases}			
	\end{align*}

	Now, $\widetilde{S}=(I-q\widetilde{D})\cdot \widetilde{U_1}$.
	
	It is easy to figure out the zeroth column of $\widetilde{S}$ by a direct calculation.
	So, let $c\geq 1$.
	We have:
	\begin{align*}
		[\widetilde{S}]_{r,c}=
		[I-q\widetilde{D}]_{r,c+1}\cdot [\widetilde{U_1}]_{c+1,c}
		+[I-q\widetilde{D}]_{r,c}\cdot [\widetilde{U_1}]_{c,c}
		+[I-q\widetilde{D}]_{r,c-1}\cdot [\widetilde{U_1}]_{c-1,c}.
	\end{align*}
	This expression is clearly $0$ if $r<c-1$.
	If $r=c-1$, we get:
	\begin{align*}
		[\widetilde{S}]_{c-1,c}
		&=[I-q\widetilde{D}]_{c-1,c-1}\cdot [\widetilde{U_1}]_{c-1,c}
		= \frac{(1-q^c)^2}{(1-q^{2c-1})(1-aq^{2c})}.
	\end{align*}
	If $r=c$, we get:
	\begin{align*}
	[\widetilde{S}]_{c,c}
	&=[I-q\widetilde{D}]_{c,c}\cdot [\widetilde{U_1}]_{c,c}
	+[I-q\widetilde{D}]_{c,c-1}\cdot [\widetilde{U_1}]_{c-1,c}\nonumber\\
	&={\frac { \left( 1-{q}^{c+1} \right)  \left( a{q}^{3c}-a{q}^{2c}-a{
		q}^{2c-1}+{q}^{c} \right) }{ \left( 1-a{q}^{2c-1} \right)  \left( 
		1-a{q}^{2c+1} \right) }}+{\frac {{q}^{{c}^{2}-c- \left( c-1 \right) 
		^{2}+1} \left( 1-{q}^{c} \right) }{1-a{q}^{2c-1}}}\nonumber\\
	&={\frac {{q}^{c}(2a{q}^{2c}-a{q}^{c}-{q}^{c+1}-a{q}^{c-1}-{q}^{
		c}+2)}{ 
		\left( 1-a{q}^{2c-1} \right)  
		\left( 1-a{q}^{2c+1} \right)}
	}.
	\end{align*}
	If $r=c+1$, we get:
	\begin{align*}
	[\widetilde{S}]_{c+1,c}
	&=[I-q\widetilde{D}]_{c+1,c+1}\cdot [\widetilde{U_1}]_{c+1,c}
	+[I-q\widetilde{D}]_{c+1,c}\cdot [\widetilde{U_1}]_{c,c}
	+[I-q\widetilde{D}]_{c+1,c-1}\cdot [\widetilde{U_1}]_{c-1,c}\nonumber\\
	&={\frac { \left( 1-{q}^{c+2} \right) a{q}^{3c+1} \left( a{q}^{c}-1
	\right) }{ \left( 1-a{q}^{2c} \right)  \left( 1-a{q}^{2c+1}
	\right) }}+
	{\frac {{q}^{ \left( c+1 \right) ^{2}-c-{c}^{2}} \left( 1-
   a{q}^{2c+2} \right)  \left( a{q}^{3c}-a{q}^{2c}-a{q}^{2c-1}+{q
   }^{c} \right) }{ \left( 1-a{q}^{2c-1} \right)  \left( 1-a{q}^{2c+1
   } \right) }}\nonumber\\
   &\qquad +{\frac {a{q}^{ \left( c+1 \right) ^{2}-c- \left( c-1
	\right) ^{2}} \left( 1-a{q}^{2c+2} \right)  \left( 1-{q}^{c}
	\right) }{ \left( 1-a{q}^{2c-1} \right)  \left( 1-a{q}^{2c}
	\right) }}\nonumber\\
   &=\dfrac{q^{2c+1}(1-aq^{c})^2}{(1-aq^{2c})(1-aq^{2c+1})}.
	\end{align*}
	If $r>c+1$, we get:
	\begin{align*}
		[\widetilde{S}]_{r,c}
		&=[I-q\widetilde{D}]_{r,c+1}\cdot [\widetilde{U_1}]_{c+1,c}
		+[I-q\widetilde{D}]_{r,c}\cdot [\widetilde{U_1}]_{c,c}
		+[I-q\widetilde{D}]_{r,c-1}\cdot [\widetilde{U_1}]_{c-1,c}\nonumber\\
		&={\frac {{a}^{r-c-2}{q}^{{r}^{2}-r- \left( c+1 \right) ^{2}+1} \left( 1
		-a{q}^{2r} \right) a{q}^{3c+1} \left( a{q}^{c}-1 \right) }{
		 \left( 1-a{q}^{2c} \right)  \left( 1-a{q}^{2c+1} \right) }}\nonumber\\
		 &+{\frac {{a}^{r-c-1}{q}^{-{c}^{2}+{r}^{2}-r+1} \left( 1-a{q}^{2r}
		 \right)  \left( a{q}^{3c}-a{q}^{2c}-a{q}^{2c-1}+{q}^{c}
		 \right) }{ \left( 1-a{q}^{2c-1} \right)  \left( 1-a{q}^{2c+1}
		 \right) }}+{\frac {{a}^{r-c}{q}^{{r}^{2}-r- \left( c-1 \right) ^{2}+1
		} \left( 1-a{q}^{2r} \right)  \left( 1-{q}^{c} \right) }{ \left( 1-a
		{q}^{2c-1} \right)  \left( 1-a{q}^{2c} \right) }}\nonumber \\
		&=0.
	\end{align*}	
\end{proof}	

We now record the transformation of Bailey pairs relative to $a=q$ implied by the above.
\begin{cor}
	If $\alpha_n$ and $\beta_n$ ($n\in\ZZ_{\geq 0}$) form a Bailey pair w.r.t. $a=q$, then so do:
	\begin{align}
		\alpha^{\newb}_0&=
		\frac{\alpha_0}{1+q} + \frac{1-q}{(1+q)(1-q^3)}\alpha_1 \nonumber\\
	\alpha^{\newb}_n&=			
		\dfrac{q^{2n-1}(1-q^n)\cdot \alpha_{n-1}}{(1-q^{2n-1})(1+q^{n})}  +
		\dfrac{2q^n\cdot  \alpha_n}{(1+q^{n})(1+q^{n+1})} +
		\dfrac{(1-q^{n+1})\cdot \alpha_{n+1}}{(1+q^{n+1})(1-q^{2n+3})},
		\qquad (n>0) \tag{S $\alpha$}\label{move:Sa}\\
		\tag{S $\beta$} \label{move:Sb}
	 	\beta^{\newb}_n &= (1-q^{n+1})^2\beta_{n+1}.
	\end{align}
\end{cor}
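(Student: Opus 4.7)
The plan is to observe that this corollary is merely the Bailey-pair reformulation of the preceding proposition specialized to $a=q$. Recall from the discussion before the definition of $\widetilde{M}$ that the operation $\beta \mapsto M\beta$ on Bailey $\beta$-vectors corresponds to $\alpha \mapsto \widetilde{M}\alpha = L(a)^{-1}ML(a)\alpha$ on the paired $\alpha$-vectors. Applying this with $M = S$ and $a=q$, the $\beta$-side formula is immediate: since the only nonzero entries of $S$ are $[S]_{r,r+1} = (1-q^{r+1})^2$, we read off $\beta^{\newb}_n = (1-q^{n+1})^2 \beta_{n+1}$.

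For the $\alpha$-side, I would substitute $a=q$ into the three non-zero diagonals of $\widetilde{S}$ computed in the proposition and simplify using the factorizations $1-q^{2r} = (1-q^r)(1+q^r)$ and $1-q^{2r+2} = (1-q^{r+1})(1+q^{r+1})$. The superdiagonal entry becomes
\[
[\widetilde{S}]_{r,r+1}\big|_{a=q} = \frac{(1-q^{r+1})^2}{(1-q^{2r+2})(1-q^{2r+3})} = \frac{1-q^{r+1}}{(1+q^{r+1})(1-q^{2r+3})},
\]
and the subdiagonal one becomes
\[
[\widetilde{S}]_{r,r-1}\big|_{a=q} = \frac{q^{2r-1}(1-q^r)^2}{(1-q^{2r-1})(1-q^{2r})} = \frac{q^{2r-1}(1-q^r)}{(1-q^{2r-1})(1+q^r)},
\]
matching the coefficients of $\alpha_{n+1}$ and $\alpha_{n-1}$ in the statement.

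The diagonal entry requires the only non-trivial simplification step: setting $a=q$ in the numerator gives
\[
q^r\bigl(2q^{2r+1} - q^{r+1} - q^{r+1} - q^r - q^r + 2\bigr) = 2q^r(1 - q^r)(1-q^{r+1}),
\]
as one verifies by expanding the right side. Dividing by $(1-q^{2r})(1-q^{2r+2}) = (1-q^r)(1+q^r)(1-q^{r+1})(1+q^{r+1})$ collapses this to $2q^r/((1+q^r)(1+q^{r+1}))$, which is the coefficient of $\alpha_n$ for $n > 0$.

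Finally I would handle $n=0$ separately, as $\widetilde{S}$ has no entry in column $-1$: only the diagonal and superdiagonal entries contribute. Evaluating the simplified diagonal formula at $r=0$ gives $2/((1+1)(1+q)) = 1/(1+q)$, and the superdiagonal formula gives $(1-q)/((1+q)(1-q^3))$, reproducing the stated $\alpha^{\newb}_0$. There is no real conceptual obstacle here — the work has already been done in the proposition — the only mild pitfall is the algebraic identity for the diagonal numerator and checking that the $r=0$ value of the diagonal expression is finite and equals $1/(1+q)$ despite the apparent $(1-q^{2r-1})$ factor, which is absent because the diagonal formula after simplification no longer involves it.
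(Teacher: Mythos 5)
Your proposal is correct and is exactly the argument the paper intends: the corollary is recorded as the $a=q$ specialization of the tridiagonal formula for $\widetilde{S}$ from the preceding proposition, with the $\beta$-side read off from $S$ itself. Your simplifications of the three diagonals (including the factorization $2q^{2r+1}-2q^{r+1}-2q^r+2=2(1-q^r)(1-q^{r+1})$ and the separate check at $n=0$) all verify.
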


\section{Nahm-type sums with double poles}
\label{sec:dpns}
We shall work with the following Nahm-type sums involving squares of the Pochhammer symbols that appear in the 
denominators of their summands.

\begin{defi}
For $t\geq 1, 1\leq s\leq t+1$ we define
\begin{align}
\dn_{t,s}(w,q)=	
	\begin{cases}
	\sum\limits_{n_1,\dots,n_t\geq 0}\dfrac{(-w)_{n_1}q^{n_1n_2+\cdots+n_{t-1}n_t + n_1 + \cdots + n_{s-1} + 2n_s+n_{s+1}+\cdots +n_t} }{(q)_{n_1}^2\cdots (q)_{n_t}^2}
	& 1\leq s\leq t \\
	& \\
	\sum\limits_{n_1,\dots,n_t\geq 0}\dfrac{(-w)_{n_1}q^{n_1n_2+\cdots+n_{t-1}n_t + n_1 + \cdots + n_t} }{(q)_{n_1}^2\cdots (q)_{n_t}^2}
	& s=t+1.
	\end{cases}
\end{align}
\end{defi}

Our first aim is to now rewrite these expressions so that they become amenable to the Bailey machinery.
Note that the case $\dn_{t,t+1}(w,q)$ ($t\geq 2$) was already handled in \cite{JenMil-double2}.  Observe also for $1 \leq s \leq t$, $ \dn_{t,s}(0,q)=\dn_{t,t+1-s}(0,q)$.

\begin{prop}
	\label{prop:doublepoleBailey}
	We have the following equality for $t\geq 2, 2\leq s\leq t$:
	\begin{align}
		&\dn_{t,s}(w,q)	\nonumber\\
		&=\sum_{
			\substack{
				m_1,\cdots,m_{t-1}\geq 0
			}
			}
		\dfrac{(-1)^{\sum_{j=2}^{t-1} m_j}
			\cdot (1-q^{m_{s-1}+1})\cdot 
			q^{\sum_{j=1}^{t-1} \frac{m_j^2+m_j}{2}}
			\cdot w^{m_1}(-w^{-1}q)_{m_1}}
			{(q)_{\infty}^{t}\cdot \,\,(q)_{m_1}\cdot 
			(q)_{m_1-m_2}\cdots (q)_{m_{s-2}-m_{s-1}}(q)_{m_{s-1}-m_{s}+1}(q)_{m_{s}-m_{s+1}}\cdots (q)_{m_{t-2}-m_{t-1}} }.
		\label{eqn:DBailey}
	\end{align}	
	Here, if $s=t-1$, the final Pochhammer in the denominator of the right-hand side is $(q)_{m_{t-2}-m_{t-1}+1}$.
	If $s=t$, the denominator on the right-hand side is simply $(q)^t_\infty(q)_{m_1}(q)_{m_1-m_2}\cdots(q)_{m_{t-2}-m_{t-1}}$.

	For $s=t+1$, we have 
	\begin{align}
		&\dn_{t,t+1}(w,q)	
		=\sum_{
			\substack{
				m_1,\cdots,m_{t-1}\geq 0
			}
			}
		\dfrac{(-1)^{\sum_{j=2}^{t-1} m_j}
			\cdot 
			q^{\sum_{j=1}^{t-1} \frac{m_j^2+m_j}{2}}
			\cdot w^{m_1}(-w^{-1}q)_{m_1}}
			{(q)_{\infty}^{t}\cdot \,\,(q)_{m_1}\cdot 
			(q)_{m_1-m_2}\cdots (q)_{m_{t-2}-m_{t-1}} }.
		\label{eqn:DBaileylast}
	\end{align}	
\end{prop}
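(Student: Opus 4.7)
The strategy is to convert the double-pole denominators $(q)_{n_j}^{-2}$ into a single-pole form by iterative application of the pentagon identity \eqref{eqn:qdilogpentagon} for the quantum dilogarithm, realised at the level of $q$-series via Euler's identities \eqref{eqn:euler1}, \eqref{eqn:euler2}, Heine's transformation \eqref{eqn:Heine}, and the $q$-binomial theorem \eqref{eqn:qbin}.

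The first step handles the $(-w)_{n_1}$ weight together with the $(q)_{n_1}^{-2}$ factor via a single well-chosen application of Heine \eqref{eqn:Heine} (taking the limit $a\to 0$, with $b=-w$, $c=q$, and the free base variable set to the appropriate $q$-power of $n_1$). This introduces the new summation variable $m_1$, converts $(-w)_{n_1}$ into the dual factor $w^{m_1}(-w^{-1}q)_{m_1}$, and trades the squared Pochhammer for a single $(q)_{m_1}$ in the denominator together with a coupling factor $(q^{n_2+1})_{m_1}$ and a pair of $(q)_\infty$-type prefactors. For the remaining $(q)_{n_j}^{-2}$ with $j\geq 2$, I would use the factorisation $\frac{1}{(q)_{n_j}^2}=\frac{(q^{n_j+1})_\infty}{(q)_\infty(q)_{n_j}}$ and expand each $(q^{n_j+1})_\infty$ by Euler \eqref{eqn:euler2}, producing the new summation variables $m_2,\dots,m_{t-1}$ together with signs $(-1)^{m_j}$ and weights $q^{\binom{m_j}{2}}$. (The $j=t$ factor needs no fresh $m$-variable; it is absorbed when summing over $n_t$ below.)

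Next, sum out the original variables $n_j$ in the order $n_t,n_{t-1},\dots,n_2$. After the substitutions above, the exponent of $q$ is linear in each $n_j$, combining the couplings $q^{n_{j-1}n_j+n_jn_{j+1}}$, the original weights from $\dn_{t,s}$, and the $q^{m_{j-1}n_j}$ produced by the preceding Euler expansion. Each inner sum $\sum_{n_j\geq 0}q^{(\text{linear})n_j}/(q)_{n_j}$ then evaluates by Euler \eqref{eqn:euler1} to $(q^{\alpha_j})_\infty^{-1}=(q)_{\alpha_j-1}/(q)_\infty$; the finite piece is precisely the Pochhammer $(q)_{m_{j-1}-m_j}$ (or $(q)_{m_{j-1}-m_j+1}$) appearing in the target denominator, and the $(q)_\infty$'s accumulate to the overall prefactor $(q)_\infty^{-t}$.

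The main obstacle is the asymmetric weight $q^{2n_s}$ present in the case $2\leq s\leq t$: at the $j=s$ summation, the argument of $(q^{\alpha_s})_\infty^{-1}$ is shifted by one, producing the distinguished factor $(1-q^{m_{s-1}+1})$ in the numerator along with the shifted denominator $(q)_{m_{s-1}-m_s+1}$. The boundary subcases $s=t-1$ and $s=t$ must be treated separately, since the shifted Pochhammer then interacts with the end of the chain (which explains the explicit statements for these cases); the case $s=t+1$ carries no such shift and is already in \cite{JenMil-double2}. Finally, the $(-1)^{m_j}$ signs from \eqref{eqn:euler2} accumulate cleanly to $(-1)^{\sum_{j=2}^{t-1}m_j}$, with $m_1$ carrying no sign precisely because it was introduced via Heine \eqref{eqn:Heine} rather than \eqref{eqn:euler2}.
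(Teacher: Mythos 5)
Your overall flavour is right --- the paper's proof does run through the quantum-dilogarithm/constant-term picture, uses Heine's transformation to dualize the $(-w)_{n_1}$ factor into $w^{m_1}(-w^{-1}q)_{m_1}$, and closes inner sums with Euler's identities --- but the bookkeeping in your sketch breaks down at precisely the step that makes the proposition nontrivial: the resolution of the squared Pochhammers. Two concrete problems. First, you assign no new variable to the $j=t$ node and assert that $(q)_{n_t}^{-2}$ ``is absorbed when summing over $n_t$''; but after your substitutions that sum is $\sum_{n_t\geq 0}q^{(\mathrm{linear})n_t}/(q)_{n_t}^2$, which is not an infinite product and cannot be evaluated by \eqref{eqn:euler1} --- compare \eqref{false}, where closing exactly such a sum forces a new variable carrying a quadratic weight. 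Second, and more structurally, \eqref{eqn:euler1} applied to a linear exponent yields $1/(q^{\alpha_j})_\infty=(q)_{\alpha_j-1}/(q)_\infty$, a Pochhammer in the \emph{numerator}, whereas the target \eqref{eqn:DBailey} requires the difference Pochhammers $(q)_{m_{j-1}-m_j}$ in the \emph{denominator} (these are what truncate the sum to $m_1\geq m_2\geq\cdots$). Denominator differences of that shape come from alternating sums $\sum_{\ell}(-1)^{\ell}q^{\ell(1+m_{j-1}-m_j)+\binom{\ell}{2}}/(q)_{\ell}=(q^{1+m_{j-1}-m_j})_\infty=(q)_\infty/(q)_{m_{j-1}-m_j}$ via \eqref{eqn:euler2}, i.e.\ from auxiliary variables with quadratic weights that your scheme never introduces.

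The paper's argument supplies exactly this missing machinery: $\dn_{t,s}$ is written as a constant term in noncommuting variables $\zeta_j$; the pentagon identity \eqref{eqn:qdilogpentagon} creates coupling factors $(-q^{h_{j-1}+h_j-1}\zeta_j\zeta_{j-1}^{-1})_\infty^{-1}$ whose expansion variables $\ell_j$ pick up quadratic weights upon normal ordering and later produce the $(q)_{m_{j-1}-m_j}$'s; and, crucially, each remaining pair $(q^{h_j}\zeta_j)_\infty^{-1}(q^{h_j}\zeta_j^{-1})_\infty^{-1}$ --- the avatar of a double pole --- is expanded by Andrews' Hecke-type lemmas \eqref{eqn:and1}--\eqref{eqn:and2} into a genuinely two-dimensional sum over $m_j\geq n_j$, $n_j\in\ZZ$. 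The distinguished factor $(1-q^{m_{s-1}+1})$ comes from the $(1-\zeta_s^{-1})$ in \eqref{eqn:and2} (used at position $s$ because $h_s=1$), not from a unit shift in an Euler sum. So roughly twice as many auxiliary variables are needed as you allot, and it is the constant-term conditions together with the $\ell_j$- and $r$-summations that collapse them back down to $m_1,\dots,m_{t-1}$. A smaller issue: Heine with $b=-w$ produces $(-w)^{m_1}$ and a stray $(-w)_\infty$ prefactor; in the paper both disappear only because the $r$-sum $\sum_r(-1)^rx^rw^r(-w^{-1}q)_r/(q)_{r-m_2}$ is evaluated in closed form against a pre-existing denominator $(q)_{r-m_2}$, a cancellation for which your sketch has no counterpart.
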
	
\begin{proof}
The $s=t+1$ case is already handled in \cite[Lemma 4.2]{JenMil-double2}. 
The proof for $2\leq s\leq t$ proceeds similarly, with appropriate adjustments.

Let $\zeta_j$ be non-commuting variables such that $\zeta_j\zeta_{j+1}=q\zeta_{j+1}\zeta_j$ 
and $\zeta_i\zeta_j=\zeta_j\zeta_i$ whenever $|i-j|>1$.
We have, analogously to \cite[Proposition 4.1]{JenMil-double2}, 
\begin{align*}
	&\sum_{n_1,\dots,n_t\geq 0}\dfrac{(-w)_{n_1}q^{n_1n_2+\cdots+n_{t-1}n_t + n_1 + \cdots + n_{s-1} + 2n_s+n_{s+1}+\cdots +n_t} }{(q)_{n_1}^2\cdots (q)_{n_t}^2}
	\nonumber\\
	&=\ct_{\zeta_1,\cdots,\zeta_t} \dfrac{(-wq^{h_1}\zeta_1)_{\infty}}{(q^{h_1}\zeta_1)_{\infty}}
	\left(\prod_{j=2}^t{(q^{h_j}\zeta_j)_\infty^{-1}}{(q^{h_{j-1}}\zeta_{j-1}^{-1})^{-1}_\infty}\right)
	\frac{1}{(q^{h_{t}}\zeta_{t}^{-1})_\infty}
\end{align*}
where $h_j=\frac{1}{2}$ for $j\neq s$ and $h_s=1$. 
Further, $\ct_{\zeta_1,\cdots,\zeta_t}$ denotes the constant term with respect to $\zeta_1,\dots,\zeta_t$.
Now we manipulate the right-hand side expression (without the constant term operation).

For $j=2,\cdots,k$, we change:
\begin{align*}
	{(q^{h_j}\zeta_j)^{-1}_\infty}{(q^{h_{j-1}}\zeta_{j-1}^{-1})^{-1}_\infty}	
	&=(q^{h_{j-1}}\zeta_{j-1}^{-1})_\infty^{-1}
	(-q^{h_{j-1}+h_j}\zeta_{j-1}^{-1}\zeta_j)_\infty^{-1}
	(q^{h_{j}}\zeta_{j})_\infty^{-1}\\
	&=(q^{h_{j-1}}\zeta_{j-1}^{-1})_\infty^{-1}
	(-q^{h_{j-1}+h_j-1}\zeta_j\zeta_{j-1}^{-1})_\infty^{-1}
	(q^{h_{j}}\zeta_{j})_\infty^{-1}.
\end{align*}

We see:
\begin{align*}
	&\dfrac{(-wq^{h_1}\zeta_1)_{\infty}}{(q^{h_1}\zeta_1)_{\infty}}
	\left(\prod_{j=2}^t{(q^{h_j}\zeta_j)_\infty^{-1}}{(q^{h_{j-1}}\zeta_{j-1}^{-1})^{-1}_\infty}\right)
	\frac{1}{(q^{h_{t}}\zeta_{t}^{-1})_\infty}\\
	&=\dfrac{(-wq^{h_1}\zeta_1)_{\infty}}{(q^{h_1}\zeta_1)_{\infty}}
	\frac{1}{(q^{h_1}\zeta_1^{-1})_{\infty}}
	\left(\prod_{j=2}^{t}
	(-q^{h_{j-1}+h_j-1}\zeta_j\zeta_{j-1}^{-1})_\infty^{-1}
	(q^{h_{j}}\zeta_{j})_\infty^{-1}
	(q^{h_{j}}\zeta_{j}^{-1})_\infty^{-1}
	\right).
\end{align*}

Now we use the following expansions.
We expand $\frac{(-wq^{h_1}\zeta_1)_{\infty}}{(q^{h_1}\zeta_1)_{\infty}}$
factor with the summation variable $r_1\geq 0$ using \eqref{eqn:qbin}.
We expand $\frac{1}{(q^{h_1}\zeta_1)_{\infty}}$ using variable $r_2\geq 0$ using \eqref{eqn:euler1}.
Each $(-q^{h_{j-1}+h_j-1}\zeta_j\zeta_{j-1}^{-1})_\infty^{-1}$ is also expanded using \eqref{eqn:euler1}, and 
we use summation variable $\ell_j\geq 0$.
Factors $(q^{h_{j}}\zeta_{j})_\infty^{-1}
(q^{h_{j}}\zeta_{j}^{-1})_\infty^{-1}$ are expanded using \eqref{eqn:and1} and \eqref{eqn:and2} as appropriate
and using expansion variables $m_j\geq n_j \in \ZZ$.
We get that the previous expression equals: \begin{align*}
\sum_{
	\substack{
		r\geq 0\\
		\forall j,\,\, \ell_j\geq 0,\\
		\forall j,\,\, m_j\geq n_j\in\ZZ
	}
	}
	&
	\frac{1}{(q)^{2k-2}_{\infty}}
	\dfrac{(-1)^{\sum_{j=2}^t(\ell_j+m_j+n_j)}
	\cdot 
	q^{\frac{r_1+r_2}{2}+\frac{n_s}{2}+\sum_{j=2}^t\left( \frac{m_j^2+m_j}{2} - \frac{n_j^2}{2}
	 + \ell_j(h_{j-1}+h_j-1)\right) }\cdot (-w)_{r_1}}
	{(q)_{r_1}(q)_{r_2}\,\,(q)_{\ell_2}\cdots(q)_{\ell_t}}
	\nonumber\\
	&\times \zeta_1^{r_1-r_2}
	\left(\prod_{j=2}^t (\zeta_j\zeta_{j-1}^{-1})^{\ell_j}\zeta_j^{n_j}(1-\zeta_s^{-1})^{\delta_{j=s}}\right),
\end{align*}	
where $\delta_{j=s}$ is $1$ if $j=s$ and $0$ otherwise.
Let us denote $\ell_{t+1}=0$.

Note that the term $h_{j-1}+h_j-1$ is $0$ for $j\neq s, s+1$ ($2\leq j\leq t$),
is $\frac{1}{2}$ for $j=s$ and also for $j=s+1$ whenever $s+1\leq k$.
Thus we see that 
\begin{align*}
	\sum_{j=2}^t \ell_j(h_{j-1}+h_j-1) = \dfrac{\ell_s+\ell_{s+1}}{2}
\end{align*}
with the convention that $\ell_{t+1}=0$, which comes into play when $s=k$.

Additionally, 
\begin{align*}
	\zeta_1^{r_1-r_2}
	\left(\prod_{j=2}^t (\zeta_j\zeta_{j-1}^{-1})^{\ell_j}\zeta_j^{n_j}(1-\zeta_s^{-1})^{\delta_{j=s}}\right)
	&=q^{\sum_{j=2}^t \frac{\ell_j(\ell_j+1)}{2}}\zeta_1^{r_1-r_2}
	\left(\prod_{j=2}^t (\zeta_{j-1}^{-1}\zeta_j)^{\ell_j}\zeta_j^{n_j}(1-\zeta_s^{-1})^{\delta_{j=s}}\right)\\
	&=q^{\sum_{j=2}^t \frac{\ell_j(\ell_j+1)}{2}}
	\zeta_1^{r_1-r_2-l_1}
	\left(\prod_{j=2}^t \zeta_j^{n_j+\ell_j-\ell_{j+1}}(1-\zeta_s^{-1})^{\delta_{j=s}}\right)
\end{align*}
where we again follow the convention that $\ell_{t+1}=0$.
We thus get:
\begin{align*}
	&\frac{1}{(q)^{2k-2}_{\infty}}
	\sum_{
		\substack{
			r_1, r_2\geq 0\\
			\forall j,\,\, \ell_j\geq 0,\\
			\forall j,\,\, m_j\geq n_j\in\ZZ
		}
		}
		\dfrac{(-1)^{\sum_{j=2}^t(\ell_j+m_j+n_j)}
		\cdot 
		q^{\frac{r_1+r_2}{2}+\frac{n_s}{2}+\frac{\ell_s+\ell_{s+1}}{2}+\sum_{j=2}^t\left( \frac{m_j^2+m_j}{2} - \frac{n_j^2}{2}
		 + \frac{\ell_j(\ell_j+1)}{2}\right) }\cdot (-w)_{r_1}}
		{(q)_{r_1}(q)_{r_2}\,\,(q)_{\ell_2}\cdots(q)_{\ell_t}}
		\nonumber\\
		&\qquad\qquad\qquad\times \zeta_1^{r_1-r_2-\ell_1}
		\left(\prod_{j=2}^t \zeta_j^{n_j+\ell_j-\ell_{j+1}}(1-\zeta_s^{-1})^{\delta_{j=s}}\right)
		\\
	&=\frac{1}{(q)^{2k-2}_{\infty}}
	\sum_{
		\substack{
			r_1, r_2\geq 0\\
			\forall j,\,\, \ell_j, m_j\geq 0,\\
			\forall j,\,\, n_j\in\ZZ
		}
		}
		\dfrac{(-1)^{\sum_{j=2}^t(\ell_j+m_j)}
		\cdot 
		q^{\frac{r_1+r_2}{2}+\frac{n_s}{2}+\frac{\ell_s+\ell_{s+1}}{2}+\sum_{j=2}^t\left( \frac{m_j^2+m_j}{2} + n_jm_j + \frac{n_j}{2}
		 + \frac{\ell_j(\ell_j+1)}{2}\right) }\cdot (-w)_{r_1}}
		{(q)_{r_1}(q)_{r_2}\,\,(q)_{\ell_2}\cdots(q)_{\ell_t}}
		\nonumber\\
		&\qquad\qquad\qquad\times \zeta_1^{r_1-r_2-\ell_1}
		\left(\prod_{j=2}^t \zeta_j^{n_j+\ell_j-\ell_{j+1}}(1-\zeta_s^{-1})^{\delta_{j=s}}\right)
\end{align*}	

Now we distribute the term $(1-\zeta_s^{-1})$ and get two summations.
For the first summation, we get the constant term by setting
$2\leq j\leq k$:
\begin{align*}
	r_1&=r_2+\ell_2\\
	n_j&=\ell_{j+1}-\ell_j
\end{align*}
For the second term, the only change is:
\begin{align*}
	n_s&=\ell_{s+1}-\ell_s+1
\end{align*}
For convenience, in both cases, we also replace $r_2$ by $r$.

The first term arising from the $1$ in $(1-\zeta_s^{-1})$ becomes:
\begin{align*}
		&\sum_{
		\substack{
			r\geq 0\\
			\forall j,\,\, \ell_j,m_j\geq 0
		}
		}
		\dfrac{(-1)^{\sum_{j=2}^t(\ell_{j}+m_j)}
		\cdot 
		q^{r+\frac{\ell_2}{2}+\ell_{s+1}+\sum_{j=2}^t\left( (\ell_{j+1}-\ell_j)m_j+\frac{m_j^2+m_j}{2}+\frac{\ell_{j+1}-\ell_j}{2}
		 + \frac{\ell_j(\ell_j+1)}{2}\right) }\cdot (-w)_{r+\ell_2}}
		{(q)^{2k-2}_{\infty}\cdot (q)_{r}(q)_{r+\ell_2}\cdot (q)_{\ell_2}\cdots(q)_{\ell_t}}\\
		&=
		\sum_{
		\substack{
			r\geq 0\\
			\forall j,\,\, \ell_j,m_j\geq 0
		}
		}
		\dfrac{(-1)^{\sum_{j=2}^t(\ell_{j}+m_j)}
		\cdot 
		q^{r+\ell_{s+1}+\sum_{j=2}^t\left( (\ell_{j+1}-\ell_j)m_j+\frac{m_j^2+m_j}{2}	
		 + \frac{\ell_j(\ell_j+1)}{2}\right) }\cdot (-w)_{r+\ell_2}}
		{(q)^{2k-2}_{\infty}\cdot (q)_{r}(q)_{r+\ell_2}\cdot (q)_{\ell_2}\cdots(q)_{\ell_t}}\\
\end{align*}	
For the second term arising from $-\zeta_s^{-1}$ of $(1-\zeta_s^{-1})$, we similarly get:
\begin{align*}
	&\frac{1}{(q)^{2k-2}_{\infty}}
		\sum_{
			\substack{
				r\geq 0\\
				\forall j,\,\, \ell_j,m_j\geq 0
			}
			}
			\dfrac{-(-1)^{\sum_{j=2}^t(\ell_{j}+m_j)}
			\cdot 
			q^{r+\ell_{s+1}+m_s+1+\sum_{j=2}^t\left( (\ell_{j+1}-\ell_j)m_j+\frac{m_j^2+m_j}{2} 
			 + \frac{\ell_j(\ell_j+1)}{2}\right) }\cdot (-w)_{r+\ell_2}}
			{(q)_{r}(q)_{r+\ell_2}\,\,(q)_{\ell_2}\cdots(q)_{\ell_t}}		
\end{align*}	
Combining the two terms, we get:
\begin{align*}
&
\sum_{
	\substack{
		r\geq 0\\
		\forall j,\,\, \ell_j,m_j\geq 0
	}
	}
	\dfrac{(-1)^{\sum_{j=2}^t(\ell_{j}+m_j)}
	\cdot (1-q^{m_s+1})\cdot 
	q^{r+\ell_{s+1}+\sum_{j=2}^t\left( (\ell_{j+1}-\ell_j)m_j+\frac{m_j^2+m_j}{2} 
	 + \frac{\ell_j(\ell_j+1)}{2}\right) }\cdot (-w)_{r+\ell_2}}
	{(q)_{\infty}^{2k-2}\cdot (q)_{r}(q)_{r+\ell_2}\,\,(q)_{\ell_2}\cdots(q)_{\ell_t}}		
\end{align*}	
At this point, exactly as in \cite{JenMil-double2}, 
we view this as a $x\mapsto 1$ value of:
\begin{align*}
	&
	\sum_{
		\substack{
			r\geq 0\\
			\forall j,\,\, \ell_j,m_j\geq 0
		}
		}
	\dfrac{(-1)^{\sum_{j=2}^t(\ell_{j}+m_j)}
		\cdot (1-q^{m_s+1})\cdot 
		q^{r+\ell_{s+1}+\sum_{j=2}^t\left( (\ell_{j+1}-\ell_j)m_j+\frac{m_j^2+m_j}{2} 
		 + \frac{\ell_j(\ell_j+1)}{2}\right) }\cdot (-xw)_{r+\ell_2}}
		{(q)_{\infty}^{2k-2}\cdot (q)_{r}(xq)_{r+\ell_2}\,\,(q)_{\ell_2}\cdots(q)_{\ell_t}}		
\end{align*}	
Using Heine's theorem \eqref{eqn:Heine} with $a=0$, $b=-xwq^{\ell_2}$, $c=xq^{\ell_2+1}$, $t=q$ to rewrite the inner sum over $r$, that is,
\begin{align*}
	\sum_{r\geq 0}\frac{(-xw)_{r+\ell_2}q^{r}}{(q)_r(xq)_{r+\ell_2}}
	=
	\dfrac{(-xw)_{\ell_2}}{(xq)_{\ell_2}}\sum_{r\geq 0}
	\dfrac{(-xwq^{\ell_2})q^{r}}{(q)_r(xq^{\ell_2+1})_r}
	=
	\frac{(-xw)_{\infty}}{(xq)_{\infty}(q)_\infty}
	\sum_{r\geq 0}(-w^{-1}q)_{r}(-1)^rx^rw^rq^{\ell_2r},
\end{align*}	
we arrive at:
\begin{align*}
	&
	\sum_{
		\substack{
			r\geq 0\\
			\forall j,\,\, \ell_j,m_j\geq 0
		}
		}
	\dfrac{(-xw)_{\infty}x^rw^r	(-1)^{r+\sum_{j=2}^t(\ell_{j}+m_j)}
		\cdot (1-q^{m_s+1})\cdot 
		q^{\ell_2r+\ell_{s+1}+\sum_{j=2}^t\left( (\ell_{j+1}-\ell_j)m_j+\frac{m_j^2+m_j}{2} 
		 + \frac{\ell_j(\ell_j+1)}{2}\right) }\cdot (-w^{-1}q)_{r}}
		{(xq)_{\infty}(q)_{\infty}^{2k-1}\cdot \,\,(q)_{\ell_2}\cdots(q)_{\ell_t}}.		
\end{align*}	
Now we evaluate inner sums on $\ell_j$'s:
\begin{align*}
	\sum_{\ell_2\geq 0}\frac{(-1)^{\ell_2}q^{\ell_2(r-m_2+1)}q^{\frac{\ell_2(\ell_2-1)}{2}}}{(q)_{\ell_2}}
	= (q^{r-m_2+1})_{\infty}=\frac{(q)_{\infty}}{(q)_{r-m_2}}.
\end{align*}
For $j\neq 2, s+1$:
\begin{align*}
	\sum_{\ell_j\geq 0}\frac{(-1)^{\ell_j}q^{\ell_j(1+m_{j-1}-m_j)}q^{\frac{\ell_j(\ell_j-1)}{2}}}{(q)_{\ell_j}}
	=(q^{1+m_{j-1}-m_j})_{\infty}
	=\frac{(q)_{\infty}}{(q)_{m_{j-1}-m_j}}.
\end{align*}
If $j=s+1$ and $s\neq k$:
\begin{align*}
	\sum_{\ell_{s+1}\geq 0}\frac{(-1)^{\ell_{s+1}}q^{\ell_{s+1}(2+m_{s}-m_{s+1})}q^{\frac{\ell_j(\ell_j-1)}{2}}}{(q)_{\ell_{s+1}}}
	=(q^{2+m_{s}-m_{s+1}})_{\infty}
	=\frac{(q)_{\infty}}{(q)_{m_{s}-m_{s+1}+1}}.
\end{align*}
If $s=k$, then by convention, $\ell_{s+1}=\ell_{t+1}=0$, and we may simply ignore that term; all of the inner sums over $\ell_j$ are already accounted for.
We thus reach:
\begin{align*}
	&
	\sum_{
		\substack{
			r\geq 0\\
			\forall j,\,\, m_j\geq 0
		}
		}
	\dfrac{(-xw)_{\infty}x^rw^r	(-1)^{r+\sum_{j=2}^t m_j}
		\cdot (1-q^{m_s+1})\cdot 
		q^{\sum_{j=2}^t \frac{m_j^2+m_j}{2} 
		  }\cdot (-w^{-1}q)_{r}}
		{(xq)_{\infty}(q)_{\infty}^{t}\cdot \,\,(q)_{r-m_2}\,\,
		(q)_{m_2-m_3}\cdots (q)_{m_{s-1}-m_s}(q)_{m_{s}-m_{s+1}+1}(q)_{m_{s+1}-m_{s+2}}\cdots (q)_{m_{t-1}-m_t} }.
\end{align*}	
Note that if $s=k$, then the denominator is simply 
$(xq)_{\infty}(q)_{\infty}^{t}\cdot \,\,(q)_{r-m_2}\,\,
(q)_{m_2-m_3}\cdots (q)_{m_{t-1}-m_t}$.
Now, the inner sum over $r$ is:
\begin{align*}
	\sum_{r\geq 0} \frac{(-1)^rx^rw^r(-w^{-1}q)_r}{(q)_{r-m_2}}
	=(-1)^{m_2}x^{m_2}w^{m_2}(-w^{-1}q)_{m_2}\frac{(xq)_{\infty}}{(xq)_{m_2}(-xw)_{\infty}}.
\end{align*}
Combining everything and letting $x \to 1$ we finally get:
\begin{align*}
	&
	\sum_{
		\substack{
			\forall j,\,\, m_j\geq 0
		}
		}
	\dfrac{(-1)^{\sum_{j=3}^t m_j}\cdot 
		w^{m_2}(-w^{-1}q)_{m_2}
		\cdot (1-q^{m_s+1})\cdot 
		q^{\sum_{j=2}^t \frac{m_j^2+m_j}{2}}}
		{(q)_{\infty}^{t}\cdot \,\,(q)_{m_2}\cdot 
		(q)_{m_2-m_3}\cdots (q)_{m_{s-1}-m_s}(q)_{m_{s}-m_{s+1}+1}(q)_{m_{s+1}-m_{s+2}}\cdots (q)_{m_{t-1}-m_t} }.		
\end{align*}	
Now changing the indices, we have the required sum:
\begin{align*}
	&
	\sum_{
		\substack{
			m_1,\cdots,m_{t-1}\geq 0
		}
		}
	\dfrac{(-1)^{\sum_{j=2}^{t-1} m_j}
		\cdot (1-q^{m_{s-1}+1})\cdot 
		q^{\sum_{j=1}^{t-1} \frac{m_j^2+m_j}{2}}
		\cdot w^{m_1}(-w^{-1}q)_{m_1}}
		{(q)_{\infty}^{t}\cdot \,\,(q)_{m_1}\cdot 
		(q)_{m_1-m_2}\cdots (q)_{m_{s-2}-m_{s-1}}(q)_{m_{s-1}-m_{s}+1}(q)_{m_{s}-m_{s+1}}\cdots (q)_{m_{t-2}-m_{t-1}} }.
\end{align*}	
\end{proof}

The proposition above handles the case when the number of variables, $t$, in $\dn_{t,s}$ is at least $2$.
We have the following for the case $t=1$.

\begin{prop}
	\label{prop:doublepoleBailey1}
	The following holds.
	\begin{align*}
		\sum_{n\geq 0}\frac{(-w)_nq^{an}}{(q)_n^2}
		=\frac{1}{(q)_\infty}\sum_{n\geq 0} \frac{(-1)^nq^{\frac{n^2+n}{2}}(-wq^{a+n})_\infty}{(q)_n(q^{a+n})_\infty}.
	\end{align*}
	In particular, $a=1$ gives $\dn_{1,2}(w,q)$ and $a=2$ gives $\dn_{1,1}(w,q)$.
\end{prop}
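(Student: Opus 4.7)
The plan is to work backwards from the right-hand side, unwinding it via two classical identities already recalled in Section~\ref{sec:prelim}: the $q$-binomial theorem \eqref{eqn:qbin} and Euler's identity \eqref{eqn:euler2}. The key observation is that the quotient $(-wq^{a+n})_\infty/(q^{a+n})_\infty$ appearing on the right-hand side is already in the exact form produced by the $q$-binomial theorem.

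More precisely, I would first apply \eqref{eqn:qbin} with the pair $(a,z)\mapsto(-w, q^{a+n})$ to write
\begin{equation*}
\frac{(-wq^{a+n})_\infty}{(q^{a+n})_\infty}=\sum_{k\geq 0}\frac{(-w)_k\, q^{(a+n)k}}{(q)_k}.
\end{equation*}
Substituting this into the right-hand side and interchanging the order of summation (justified formally as $q$-series, or analytically for $|q|<1$), we are reduced to
\begin{equation*}
\frac{1}{(q)_\infty}\sum_{k\geq 0}\frac{(-w)_k\, q^{ak}}{(q)_k}\sum_{n\geq 0}\frac{(-1)^n q^{\frac{n(n+1)}{2}+kn}}{(q)_n}.
\end{equation*}
The inner sum is handled by Euler's identity \eqref{eqn:euler2} with $z=q^{k+1}$, since $\binom{n}{2}+(k+1)n=\frac{n(n+1)}{2}+kn$. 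This gives $(q^{k+1})_\infty=(q)_\infty/(q)_k$.

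After this substitution the factor $(q)_\infty$ cancels, each $k$-summand picks up a second factor of $1/(q)_k$, and we obtain exactly the left-hand side. I expect no real obstacle here — the only thing to watch is the formal/analytic justification for swapping the two sums, which is routine because the double series is absolutely convergent for $|q|<1$ and the required specializations $a\in\{1,2\}$ keep all Pochhammer factors well-defined. The claim about $a=1$ giving $\dn_{1,2}(w,q)$ and $a=2$ giving $\dn_{1,1}(w,q)$ then follows immediately from the defining formula for $\dn_{t,s}(w,q)$ with $t=1$.
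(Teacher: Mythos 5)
Your proof is correct and is essentially the paper's own argument run in reverse: the paper starts from the left-hand side, writes $1/(q)_n^2=(q^{n+1})_\infty/((q)_\infty(q)_n)$, expands via Euler's identity \eqref{eqn:euler2}, swaps sums, and finishes with the $q$-binomial theorem \eqref{eqn:qbin}, which are exactly your two steps in the opposite order. No substantive difference.
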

\begin{proof}
	We have:
	\begin{align*}
		\sum_{n\geq 0}&\frac{(-w)_nq^{an}}{(q)_n^2}
		=\frac{1}{(q)_\infty}\sum_{n\geq 0}\frac{(-w)_nq^{an}(q^{n+1})_{\infty}}{(q)_n}
		=\frac{1}{(q)_\infty}\sum_{n\geq 0}\frac{(-w)_nq^{an}}{(q)_n}\sum_{j\geq 0}(-1)^j\frac{q^{j(n+1)+\frac{j^2-j}{2}}}{(q)_j}\nonumber\\
		&=\frac{1}{(q)_\infty}\sum_{j\geq 0}(-1)^j\frac{q^{\frac{j^2+j}{2}}}{(q)_j}\sum_{n\geq 0}\frac{(-w)_nq^{(a+j)n}}{(q)_n}
		=\frac{1}{(q)_\infty}\sum_{j\geq 0}(-1)^j\frac{q^{\frac{j^2+j}{2}}(-wq^{a+j})_\infty}{(q)_j(q^{a+j})_\infty},
	\end{align*}
	where in the second equality we have used \eqref{eqn:euler2} and in the last, we have used \eqref{eqn:qbin}.
\end{proof}

\section{Nahm-type sums with double poles via Bailey machinery}
\label{sec:dpnsBailey}

We will start with the Slater's Bailey pair $B(3)$ relative to $a=q$ \cite{Sla-pairs}: 
\begin{align}
	\beta_n=\frac{1}{(q)_n},\,\,\alpha_n=(-1)^nq^{\frac{3n^2+n}{2}}\frac{1-q^{2n+1}}{1-q}.
	\tag{B3}\label{eqn:b3}
\end{align}

The sequence of moves depends on the parity of the number of summation variables (denoted by $t$) used in $\dn_{t,s}$.
Suppose that $t\geq 2$.
We suppose
\begin{align*}
k&=\left\lceil\frac{t}{2}\right\rceil,\quad\quad 0\leq i\leq k.
\end{align*}
We denote:
\begin{align*}
	\lambda&=k-i-1, \quad\quad 
	\mu=t-2-\lambda=\begin{cases}
		k+i-1,& t=2k\\
		k+i-2,& t=2k-1.
	\end{cases}
\end{align*}

Now, 
\begin{enumerate}
\item If $i=k$, use the move \eqref{move:Fb} $t-2$ times, followed by \eqref{move:Fwb} once.
\item If $0\leq i\leq k-1$, we apply \eqref{move:Fb} $\lambda$ times, followed by \eqref{move:Sb} once, 
	followed by \eqref{move:Fb} $\mu$ times, and finally \eqref{move:Fwb} once.
\end{enumerate}
If $t$ is even (respectively, odd), we denote the final $\beta$ 
thus obtained by $\beta_n^{(k,i,0)}(w,q)$ (respectively, $\beta_n^{(k,i,1)}(w,q)$).

Using the explicit description of these moves along with Theorem 
\ref{prop:doublepoleBailey},
it can be seen without much effort that:
\begin{align}
	\beta_{\infty}^{(k,i,0)}(w,q)&=\lim_{n\rightarrow\infty}\beta_n^
	{(k,i,0)}(w,q)
	=\frac{(q)_{\infty}^{2k-1}}{(-wq)_\infty}\dn_{2k,k+i+1}(w,q), 
	\label{eqn:betakiinf_e}\\
	\beta_{\infty}^{(k,i,1)}(w,q)&=\lim_{n\rightarrow\infty}\beta_n^
	{(k,i,1)}(w,q)
	=\frac{(q)_{\infty}^{2k-2}}{(-wq)_\infty}\dn_{2k-1,k+i}(w,q).
	\label{eqn:betakiinf_o}
\end{align}

Now we find formulas for the corresponding $\alpha_n^{(k,i,p)}(w,q)$ for $p=0,1$.

For $i=k$, we have for all $n\geq 0$:
\begin{align}
	\alpha_n^{(k,k,0)}(w,q)=
	(-1)^n q^{(k+1)n^2+kn}\frac{1-q^{2n+1}}{1-q}\frac{w^n(-w^{-1}q)_n}{(-wq)_n},
	\label{eqn:alphakk_e}\\
	\alpha_n^{(k,k,1)}(w,q)=
	q^{(k+\frac{1}{2})n^2+(k-\frac{1}{2})n}\frac{1-q^{2n+1}}{1-q}\frac{w^n(-w^{-1}q)_n}{(-wq)_n}.
	\label{eqn:alphakk_o}
\end{align}

For $i\leq k-1$, one can directly see after a straight-forward calculation that for both $p=0,1$:
\begin{align}
	\alpha_0^{(k,i,p)}(w,q)&=\frac{1+(-1)^{k-i}q^{k-i+1}}{1+q}=1-q+q^2-\cdots +(-1)^{k-i}q^{k-i}.
	\label{eqn:GAalphaki0}
\end{align}

Now let $n>0$ and $p=0,1$.
We get:
\begin{align}
	\alpha_n^{(k,i,p)}&(w,q)=\frac{(-1)^{\mu n}q^{(\mu+1)\frac{n^2+n}{2}}w^n(-w^{-1}q)_n}{(1-q)(-wq)_n}
	\left( 
		\frac{q^{2n-1}(1-q^n)}{1+q^n}\cdot(-1)^{(\lambda+1)(n-1)}q^{\frac{(\lambda+3)(n-1)^2+(\lambda+1)(n-1)}{2}}
	\right.\nonumber\\
	&\,\, +\frac{2q^{n}(1-q^{2n+1})}{(1+q^n)(1+q^{n+1})}\cdot(-1)^{(\lambda+1)n}q^{\frac{(\lambda+3)n^2+(\lambda+1)n}{2}}
	 +
	\left.
		\frac{1-q^{n+1}}{1+q^{n+1}}\cdot(-1)^{(\lambda+1)(n+1)}q^{\frac{(\lambda+3)(n+1)^2+(\lambda+1)(n+1)}{2}}
	\right)\nonumber\\
	&=
	\frac{(-1)^{(\lambda+\mu+1)n+(\lambda-1)}q^{\frac{(\lambda+\mu+4)n^2+(\mu-\lambda)n}{2}}w^n(-w^{-1}q)_n}{(1-q)(-wq)_n}
	\left(
		\frac{1-q^n}{1+q^n} 
	\right.\nonumber\\
	& \qquad\qquad\left.
		+ \frac{2(1-q^{n+1})}{(1+q^n)(1+q^{n+1})}(-1)^{\lambda+1}q^{(\lambda+2)n}
		+ \frac{1-q^{n+1}}{1+q^{n+1}}q^{(2\lambda+4)n + (\lambda+2)}
		\right)\nonumber\\
	&=
	\frac{(-1)^{(t+1)n+(k-i)}q^{\frac{(t+2)n^2+(\mu-\lambda)n}{2}}w^n(-w^{-1}q)_n}{(1-q)(-wq)_n}
	\left(
		1-\frac{2q^n}{1+q^n} 
	\right.\nonumber\\
	& \qquad\qquad\left.
		+ \frac{2(-1)^{\lambda+1}q^{(\lambda+2)n}}{1+q^n}
		+ \frac{2(-1)^{\lambda}q^{(\lambda+3)n+1}}{1+q^{n+1}}
		- \frac{2q^{(2\lambda+5)n + (\lambda+3)}}{1+q^{n+1}}
		+ q^{(2\lambda+4)n + (\lambda+2)}
	\right)\nonumber\\
	&=\frac{(-1)^{(t+1)n+(k-i)}q^{\frac{(t+2)n^2+(\mu-\lambda-2i)n}{2}}w^n(-w^{-1}q)_n}{(1-q)(-wq)_n}
	\left(
		q^{in}-2q^{(i+1)n}+2q^{(i+2)n}-\cdots+2(-1)^{k-i}q^{kn}
	\right.\nonumber\\
	&\qquad\qquad \left.
		- 2(-1)^{k-i}q^{(k+2)n+1}+2(-1)^{k-i}q^{(k+3)n+2}
		-\cdots +2q^{(2k-i+1)n+k-i}-q^{(2k-i+2)n+(k-i+1)}
	\right),\label{eqn:GAalphaki}
\end{align}
where the last equality follows from easy geometric sum formulas analogous to \eqref{eqn:GAalphaki0}.

Note that $\mu-\lambda-2i=0$ if $t=2k$ (i.e., $p=0$) and $\mu-\lambda-2i=-1$ if $t=2k-1$ (i.e., $p=1$).
It is not hard to check that \eqref{eqn:GAalphaki} with $n=0$ exactly gives \eqref{eqn:GAalphaki0}.
Similarly, for $i=k$, we understand the term in parentheses of \eqref{eqn:GAalphaki} as
$q^{kn}-q^{(k+2)n+1}$, and with this, \eqref{eqn:GAalphaki} reproduces \eqref{eqn:alphakk_e} (or \eqref{eqn:alphakk_o}).
We may thus use \eqref{eqn:GAalphaki} for all $n\geq0$ and all $0\leq i \leq k$ uniformly.

Using the equation that asserts that $\alpha^{(k,i,p)}$ and $\beta^{(k,i,p)}$ indeed form a Bailey pair relative to $a=q$,
letting $n\rightarrow\infty$ in this equation and using \eqref{eqn:betakiinf_e}, \eqref{eqn:betakiinf_o}, we deduce:

\begin{thm}\label{thm:dp-fermionic}
	If $t=2k$ ($k\geq 1$) and $0\leq i\leq k$, we have:
	\begin{align*}
		&\frac{(q)_{\infty}^{2k-1}}{(-wq)_\infty}\dn_{2k,k+i+1}(w,q)\nonumber\\
		&=\dfrac{1}{(q)^2_{\infty}}
		\sum_{r\geq 0}
		\frac{(-1)^{r+(k-i)}q^{(k+1)r^2}w^r(-w^{-1}q)_r}{(-wq)_r}
	\left(
		q^{ir}-2q^{(i+1)r}+2q^{(i+2)r}-\cdots+2(-1)^{k-i}q^{kr}
	\right.\nonumber\\
	&\qquad\qquad \left.
		- 2(-1)^{k-i}q^{(k+2)r+1}+2(-1)^{k-i}q^{(k+3)r+2}
		-\cdots +2q^{(2k-i+1)r+k-i}-q^{(2k-i+2)r+(k-i+1)}
	\right).
	\end{align*}
	If $t=2k-1$ ($k\geq 2$) and $0\leq i\leq k$, we have:
	\begin{align*}
		&\frac{(q)_{\infty}^{2k-2}}{(-wq)_\infty}\dn_{2k-1,k+i}(w,q)\nonumber\\
		&=\dfrac{1}{(q)^2_{\infty}}
		\sum_{r\geq 0}
		\frac{(-1)^{(k-i)}q^{(k+\frac{1}{2})r^2-\frac{1}{2}r}w^r(-w^{-1}q)_r}{(-wq)_r}
	\left(
		q^{ir}-2q^{(i+1)r}+2q^{(i+2)r}-\cdots+2(-1)^{k-i}q^{kr}
	\right.\nonumber\\
	&\qquad\qquad \left.
		- 2(-1)^{k-i}q^{(k+2)r+1}+2(-1)^{k-i}q^{(k+3)r+2}
		-\cdots +2q^{(2k-i+1)r+k-i}-q^{(2k-i+2)r+(k-i+1)}
	\right).
	\end{align*}
\end{thm}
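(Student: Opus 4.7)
The plan is to apply the Bailey-pair defining relation $\beta_n^{(k,i,p)} = \sum_{r=0}^n \alpha_r^{(k,i,p)}/((q)_{n-r}(q^2)_{n+r})$ to the pair obtained from Slater's \eqref{eqn:b3} by the prescribed cascade of moves, and send $n\to\infty$. Since $(q)_{n-r}\to(q)_\infty$ and $(q^2)_{n+r}\to(q)_\infty/(1-q)$, this yields
\begin{equation*}
\beta_\infty^{(k,i,p)} \;=\; \frac{1-q}{(q)_\infty^{2}}\sum_{r\geq 0}\alpha_r^{(k,i,p)}.
\end{equation*}
Substituting \eqref{eqn:betakiinf_e}--\eqref{eqn:betakiinf_o} on the left and \eqref{eqn:GAalphaki} on the right (the $1/(1-q)$ sitting in $\alpha_r$ exactly cancels the leading $1-q$) produces both claimed identities after noting that $(-1)^{(t+1)r}$ equals $(-1)^r$ when $t=2k$ and $1$ when $t=2k-1$, and that the exponent $((t+2)r^2+(\mu-\lambda-2i)r)/2$ reduces to $(k+1)r^2$ in the even case and to $(k+\tfrac12)r^2-\tfrac12 r$ in the odd case. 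Thus the bulk of the work is verifying the intermediate formulas \eqref{eqn:betakiinf_e}--\eqref{eqn:GAalphaki}.

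For the $\beta$-side I would track the cascade step by step. Each \eqref{move:Fb} introduces one more nested summation index $m_j$ with factor $(-1)^{m_j}q^{(m_j^2+m_j)/2}/(q)_{m_{j-1}-m_j}$; the unique \eqref{move:Sb} (present only when $i<k$) multiplies the summand by $(1-q^{m_{s-1}+1})^2$, one copy of which is absorbed into the adjacent Pochhammer to yield $1/(q)_{m_{s-1}-m_s+1}$; and the final \eqref{move:Fwb} contributes the factor $w^{m_1}(-w^{-1}q)_{m_1}$ and, after $n\to\infty$, the prefactor $1/(-wq)_\infty$. The resulting nested sum matches Proposition~\ref{prop:doublepoleBailey} term-by-term and identifies $\beta_\infty^{(k,i,p)}$ with $\dn_{t,s}(w,q)$ up to the stated $(q)_\infty$-power prefactor; the parameters $s=k+i+1$ (when $t=2k$) and $s=k+i$ (when $t=2k-1$) come out of the count of \eqref{move:Fb}'s applied before versus after \eqref{move:Sb}.

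The main obstacle is the $\alpha$-side when $0\leq i\leq k-1$. The multiplicative moves \eqref{move:Fa} and \eqref{move:Fwa} are easy to compose, but \eqref{move:Sa} is tridiagonal and produces a linear combination of $\alpha_{n-1}$, $\alpha_n$, $\alpha_{n+1}$ with rational coefficients in $q$. My plan is: apply the $\lambda$ preceding \eqref{move:Fa}'s to \eqref{eqn:b3} to get a closed form; substitute into \eqref{move:Sa} to obtain the three-term combination; then apply the remaining $\mu$ copies of \eqref{move:Fa} and one \eqref{move:Fwa}, each of which only contributes an overall multiplicative factor depending on the index. After factoring out the common prefactor $(-1)^{(t+1)n+(k-i)}q^{((t+2)n^2+(\mu-\lambda)n)/2}w^n(-w^{-1}q)_n/((1-q)(-wq)_n)$ and putting the three rational pieces over the common denominator $(1+q^n)(1+q^{n+1})$, what remains is a Laurent polynomial in $q^n$ which I expect to telescope, via repeated applications of the geometric-series identity already seen in \eqref{eqn:GAalphaki0}, into the alternating expression $q^{in}-2q^{(i+1)n}+\dots\pm q^{(2k-i+2)n+(k-i+1)}$ of \eqref{eqn:GAalphaki}. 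The sign $(-1)^{k-i}$ and the shift $(\mu-\lambda-2i)n/2$ should emerge naturally from this combination, but careful bookkeeping of signs and exponents will be needed.

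Finally I would check the degenerate and boundary cases. For $i=k$ the whole \eqref{move:Sa} move is absent and a direct computation immediately gives \eqref{eqn:alphakk_e}--\eqref{eqn:alphakk_o}; one should verify that \eqref{eqn:GAalphaki}, with the inner polynomial read as $q^{kn}-q^{(k+2)n+1}$, also reproduces these. Likewise, substituting $n=0$ into \eqref{eqn:GAalphaki} must yield \eqref{eqn:GAalphaki0}, which is again a short telescoping check. Once these are in place, the identity chain $\beta_\infty^{(k,i,p)} = (1-q)(q)_\infty^{-2}\sum_r\alpha_r^{(k,i,p)}$ completes the proof.
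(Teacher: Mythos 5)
Your proposal follows essentially the same route as the paper: start from Slater's pair \eqref{eqn:b3}, track the cascade of moves on the $\beta$-side to match Proposition \ref{prop:doublepoleBailey} (yielding \eqref{eqn:betakiinf_e}--\eqref{eqn:betakiinf_o}), compute the composed $\alpha^{(k,i,p)}_n$ in closed form by telescoping geometric sums (yielding \eqref{eqn:GAalphaki}), and pass to the $n\to\infty$ limit of the Bailey relation with $a=q$. The limiting identity $\beta_\infty=\tfrac{1-q}{(q)_\infty^2}\sum_{r\geq 0}\alpha_r$, the sign and exponent reductions in the even/odd cases, and the boundary checks at $i=k$ and $n=0$ are all exactly as in the paper's argument.
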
	
We note that the $i=k$ cases of the two identities above were established in \cite{JenMil-double2}.

\section{Sum=Product identities}
\label{sec:sum=product}
We now deduce various ``sum=product'' identities with an even number of summation variables in $\dn_{t,s}$, i.e., with $t=2k$.
\subsection{Andrews-Gordon series with $w\rightarrow 0$} 
We now consider the case $w\rightarrow 0$ and deduce double-pole representations of the 
Gordon-Andrews (odd modulus) series. This generalizes the $i=k$ case established in \cite[Section 5]{JenMil-double1}.

\begin{thm}\label{thm:doublepoleGA}
	For $k\geq 1$, $0\leq i\leq k$, we have the following
	\begin{align*}
	 (-1)^{k-i}\dn_{2k,k+i+1}(0,q) + 2\sum_{j=i+1}^k (-1)^{k-j}\dn_{2k,k+j+1}(0,q) 
		&
		=\frac{(q^{k-i+1}, q^{k+i+2}, q^{2k+3}\,\,;\,\,q^{2k+3})_{\infty}}{(q)^{2k+1}_{\infty}}.
	\end{align*}
\end{thm}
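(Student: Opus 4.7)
The plan is to specialize Theorem \ref{thm:dp-fermionic} at $w = 0$ and take the prescribed alternating linear combination, reducing everything to a bilateral theta sum evaluated by the Jacobi triple product. As $w \to 0$, the identity $w^r(-w^{-1}q)_r = \prod_{j=0}^{r-1}(w+q^{j+1})$ gives $w^r(-w^{-1}q)_r \to q^{r(r+1)/2}$ and $(-wq)_r \to 1$, so the $t = 2k$ case of Theorem \ref{thm:dp-fermionic} specializes, for each $0 \leq j \leq k$, to
\begin{align*}
(q)_\infty^{2k-1}\,\dn_{2k,k+j+1}(0,q) = \frac{1}{(q)_\infty^2}\sum_{r \geq 0} (-1)^{r+(k-j)}\, q^{(k+1)r^2 + r(r+1)/2}\, P_j(q^r),
\end{align*}
where $P_j(x)$ denotes the bracketed polynomial in $x$ and $q$ appearing in that theorem with $i$ replaced by $j$. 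On forming the linear combination prescribed in the statement, the extra $(-1)^{k-j}$ cancels the $(-1)^{k-j}$ inside each Bailey output, so every contribution to the inner sum carries a uniform sign $(-1)^r$.

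The heart of the proof is the telescoping polynomial identity
\begin{align*}
P_i(x) + 2\sum_{j = i+1}^{k} P_j(x) = x^i - x^{2k-i+2}\, q^{k-i+1},
\end{align*}
which I would establish by downward induction on $i$. The base case $i = k$ is immediate since $P_k(x) = x^k - x^{k+2}q$. The inductive step reduces to
\begin{align*}
P_{i-1}(x) + P_i(x) = x^{i-1} - x^i + x^{2k-i+2}\, q^{k-i+1} - x^{2k-i+3}\, q^{k-i+2},
\end{align*}
which holds by direct inspection of the two alternating blocks of $P_{i-1}$ and $P_i$: in the low block, all coefficients of $x^{i+1},\dots,x^k$ cancel pairwise, and in the high block, all coefficients of $x^{k+2}q,\dots,x^{2k-i+1}q^{k-i}$ cancel pairwise, leaving only the four boundary contributions recorded on the right. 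This bookkeeping is the densest computation in the argument and is the main obstacle I anticipate, though the cancellations are mechanical once the coefficient patterns are written out.

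Substituting the telescoping identity back, the left-hand side of the theorem becomes $(q)_\infty^{-(2k+1)}$ times
\begin{align*}
\sum_{r \geq 0} (-1)^r q^{(k+1)r^2 + r(r+1)/2}\bigl(q^{ir} - q^{(2k-i+2)r + (k-i+1)}\bigr).
\end{align*}
The substitution $r \mapsto -r - 1$ in the second summand converts its $q$-exponent into $(k+1)r^2 + r(r+1)/2 + ir$ and its sign into $-(-1)^r$, so the two pieces combine into the bilateral sum
\begin{align*}
\sum_{r \in \ZZ} (-1)^r q^{((2k+3)r^2 + (2i+1)r)/2}.
\end{align*}
Applying the Jacobi triple product \eqref{eqn:jtp} with $a = -q^{k+i+2}$ and $b = -q^{k-i+1}$ (so $ab = q^{2k+3}$ and the signs combine as $(-1)^{n(n+1)/2 + n(n-1)/2} = (-1)^n$) identifies this bilateral sum with $(q^{k-i+1},\, q^{k+i+2},\, q^{2k+3};\, q^{2k+3})_\infty$, which yields the claimed right-hand side after dividing by $(q)_\infty^{2k-1}$.
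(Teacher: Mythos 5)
Your proposal is correct and follows essentially the same route as the paper: specializing the Bailey-pair output at $w=0$, telescoping the alternating combination of the bracketed polynomials down to $q^{ir}-q^{(2k-i+2)r+(k-i+1)}$, and finishing with the Jacobi triple product. The only cosmetic difference is that you organize the cancellation as a downward induction via $P_{i-1}+P_i$ starting from Theorem \ref{thm:dp-fermionic}, whereas the paper performs the equivalent cancellation as a triangular array of vanishing column sums at the level of the $\alpha_n$'s before summing.
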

\begin{proof} 
	We begin by analyzing $(-1)^{k-i}\alpha_{n}^{(k,i,0)}(0,q) + 2\sum_{j=i+1}^k (-1)^{k-j}\alpha_{n}^{(k,j,0)}(0,q)$.

	First we consider a fixed $n>0$.
	Taking the limit $w\rightarrow 0$ in \eqref{eqn:GAalphaki},
	the outer factor becomes:
	\begin{align}
		\frac{(-1)^{n+(k-i)} q^{\frac{(2k+3)n^2+n}{2}} }{(1-q)}.
		\label{eqn:AGout}
	\end{align}
	We tentatively keep this outer factor aside, remembering that 
	it depends solely on $k$ and $n$ (which we have fixed).
	Using again the formula \eqref{eqn:GAalphaki} for $(-1)^{k-i}\alpha^{(k,i,0)}_n(0,q)$, we observe that there are
	two strings of monomials -- one in which powers of $q$ advance by $n$ and the other where they advance by $n+1$.
	When we consider $(-1)^{k-i}\alpha_{n}^{(k,i,0)}(0,q) + 2\sum_{j=i+1}^k (-1)^{k-j}\alpha_{n}^{(k,j,0)}(0,q)$, the strings corresponding to powers of $q^n$ can be arranged in the following way:
	\begin{align}
		\begin{array}{lccccl}
			q^{in} & -2q^{(i+1)n}	&+2q^{(i+2)n}	&-2q^{(i+2)n}&\cdots&+2(-1)^{k-i}q^{kn}\\
			       &  + 2q^{(i+1)n}	&-4q^{(i+2)n}	&+4q^{(i+2)n}&\cdots&+4(-1)^{k-i-1}q^{kn}\\
				   &  				& + 2q^{(i+2)n}	&-4q^{(i+2)n}&\cdots&+4(-1)^{k-i-2}q^{kn}\\
				   &  				&				& + 2q^{(i+2)n}&\cdots&+4(-1)^{k-i-3}q^{kn}\\
				   &				&				&			 & \ddots & \vdots \\
				   &				&				&			 &  & +2q^{kn} \\
		\end{array}			
		\label{eqn:trianglecancel}
	\end{align}
	These terms add up to $q^{in}$ as can be seen from the fact that all the column sums except for the first column are $0$.
	Strings that advance by $q^{n+1}$ also lead to a similar arrangement:
	\begin{align*}
		\begin{array}{rccccl}
			-2(-1)^{k-i}q^{(k+2)n+1} & \cdots & +2q^{(2k-i-1)n+(k-i-2)} &-2q^{(2k-i)n+(k-i-1)} & +2q^{(2k-i+1)n+(k-i)} & { -q^{(2k-i+2)n+(k-i+1)}}\\
			-4(-1)^{k-i-1}q^{(k+2)n+1} & \cdots & -4q^{(2k-i-1)n+(k-i-2)} &+4q^{(2k-i)n+(k-i-1)} & -2q^{(2k-i+1)n+(k-i)} & \\
			-4(-1)^{k-i-2}q^{(k+2)n+1} & \cdots & +4q^{(2k-i-1)n+(k-i-2)} &-2q^{(2k-i)n+(k-i-1)} &  & \\
			-4(-1)^{k-i-3}q^{(k+2)n+1} & \cdots & -2q^{(2k-i-1)n+(k-i-2)} & &  & \\
			\vdots & \iddots &  & &  & \\
			-2q^{(k+2)n+1} & &  & &  &.
		\end{array}			
	\end{align*}
	Again, all the column sums except the last are $0$, and so these terms add up to $-q^{(2k-i+2)n+(k-i+1)}$.

	We thus conclude that for $n>0$, we have
	\begin{align*}
		(-1)^{k-i}\alpha_{n}^{(k,i,0)}(0,q) &+ 2\sum_{j=i+1}^k (-1)^{k-j}\alpha_{n}^{(k,j,0)}(0,q)=
		\frac{(-1)^{n}q^{\frac{(2k+3)n^2+n}{2}}}{1-q}\left(q^{in}-q^{(2k-i+2)n+(k-i+1)}\right)
		\nonumber\\
		&=\frac{(-1)^{n}q^{\frac{(2k+3)n^2+(2i+1)n}{2}}}{1-q}\left(1-q^{(k-i+1)(2n+1)}\right).
	\end{align*}

	Now we consider the case $n=0$. 
	Using \eqref{eqn:GAalphaki0} and a similar arrangement of terms as in \eqref{eqn:trianglecancel}, we see that 
	\begin{align*}
	 	(-1)^{k-i}\alpha_{0}^{(k,i,0)}(0,q) + 2\sum_{j=i+1}^k (-1)^{k-j}\alpha_{0}^{(k,j,0)}(0,q)
	 	=1+q+\cdots +q^{k-i} = \frac{1-q^{k-i+1}}{1-q}.
	\end{align*}

	Putting everything together, recalling that $a=q$, we now see that:
	\begin{align*}
		(-1)^{k-i}&\beta_{\infty}^{(k,i,0)}(0,q) + 2\sum_{j=i+1}^k (-1)^{k-j}\beta_{\infty}^{(k,j,0)}(0,q) \nonumber\\
		&=\dfrac{1}{(q)_{\infty}(q^2\,\,;\,\,q)_{\infty}}
		\sum_{n=0}^{\infty}\left((-1)^{k-i}\alpha_{n}^{(k,i,0)}(0,q) + 2\sum_{j=i+1}^k (-1)^{k-j}\alpha_{n}^{(k,j,0)}(0,q)\right) \nonumber\\
		&=\dfrac{1}{(q)_{\infty}(q^2\,\,;\,\,q)_{\infty}}
		\sum_{n=0}^{\infty} \frac{(-1)^{n}q^{\frac{(2k+3)n^2+(2i+1)n}{2}}}{1-q}\left(1-q^{(k-i+1)(2n+1)}\right)\nonumber\\
		&= \dfrac{(q^{k-i+1}, q^{k+i+2}, q^{2k+3}\,\,;\,\,q^{2k+3})_{\infty}}{(q)_{\infty}^2}.
	\end{align*}
	Where, in the very last step, we have used the Jacobi triple-product identity.
	Now the required statement follows, using \eqref{eqn:betakiinf_e}.
\end{proof}	

\subsection{Andrews-Bressoud series with $w \to 1$}

We now consider the case $w\rightarrow 1$ and deduce double-pole representations of the 
Andrews-Bressoud (even modulus) series.

\begin{thm}\label{thm:doublepoleAB}
	For $k\geq 1$, $0\leq i\leq k$, we have the following:
	\begin{align*}
		(-1)^{k-i}\dn_{2k,k+i+1}(1,q) + 2\sum_{j=i+1}^k (-1)^{k-j}\dn_{2k,k+j+1}(1,q) 
		=
		\dfrac{(-q;\,\,q)_{\infty}(q^{k-i+1}, q^{k+i+1}, q^{2k+2}\,\,;\,\,q^{2k+2})_{\infty}}{(q)_{\infty}^{2k+1}}.
	\end{align*}
\end{thm}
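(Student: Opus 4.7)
The plan is to mimic the proof of Theorem \ref{thm:doublepoleGA}, specializing Theorem \ref{thm:dp-fermionic} at $w=1$ rather than $w=0$. The crucial observation is that at $w=1$ we have $w^r(-w^{-1}q)_r/(-wq)_r = (-q)_r/(-q)_r = 1$, while $(-wq)_\infty=(-q)_\infty$. Hence the Bailey-side formula in Theorem \ref{thm:dp-fermionic} (for even $t=2k$) simplifies to
\begin{align*}
\frac{(q)_\infty^{2k-1}}{(-q)_\infty}\dn_{2k,k+i+1}(1,q) = \frac{1}{(q)_\infty^2}\sum_{r\geq 0}(-1)^{r+(k-i)}q^{(k+1)r^2}\, P_{k,i}(q,r),
\end{align*}
where $P_{k,i}(q,r)$ is the polynomial in the parenthesis on the right-hand side of Theorem \ref{thm:dp-fermionic}.

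Next, I form the combination $(-1)^{k-i}\alpha_n^{(k,i,0)}(1,q) + 2\sum_{j=i+1}^k (-1)^{k-j}\alpha_n^{(k,j,0)}(1,q)$. Since the only difference from the $w=0$ case analyzed in Theorem \ref{thm:doublepoleGA} is the outer $q$-power (now $(k+1)n^2$ instead of $\tfrac{(2k+3)n^2+n}{2}$) and the signs aligning identically, the exact same triangular cancellation pattern displayed in \eqref{eqn:trianglecancel} applies verbatim to the polynomials $P_{k,j}(q,n)$. For $n>0$ this gives
\begin{align*}
(-1)^{k-i}\alpha_n^{(k,i,0)}(1,q) + 2\sum_{j=i+1}^k (-1)^{k-j}\alpha_n^{(k,j,0)}(1,q)
= \frac{(-1)^n q^{(k+1)n^2+in}}{1-q}\bigl(1 - q^{(k-i+1)(2n+1)}\bigr),
\end{align*}
and the $n=0$ contribution, computed from \eqref{eqn:GAalphaki0}, matches the $n=0$ value of this expression.

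Summing over $n\geq 0$ and applying the reflection $n\mapsto -n-1$ to the term with $q^{(k-i+1)(2n+1)}$ (which maps $(k+1)n^2+in+(k-i+1)(2n+1)$ to $(k+1)m^2+im$ for $m=-n-1$, with a compensating sign), I obtain a full bilateral sum
$$\sum_{n\in\ZZ}(-1)^n q^{(k+1)n^2+in}.$$
The Jacobi triple product \eqref{eqn:jtp} with $a=-q^{k+i+1},\, b=-q^{k-i+1},\, ab=q^{2k+2}$ evaluates this to $(q^{k-i+1},q^{k+i+1},q^{2k+2};q^{2k+2})_\infty$. Finally, using $\beta_\infty = \tfrac{1-q}{(q)_\infty^2}\sum_{r}\alpha_r$ and the identification \eqref{eqn:betakiinf_e} yields the claimed product formula after multiplying through by $(-q)_\infty/(q)_\infty^{2k-1}$.

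The main (minor) obstacle is verifying the reflection symmetry, i.e., that $(k+1)n^2+in+(k-i+1)(2n+1)$ equals $(k+1)(n+1)^2-i(n+1)$; this is a direct calculation. Beyond that, the structural argument---specialization of $w$, telescoping of the $\alpha_n^{(k,j,0)}$, and Jacobi triple product---is entirely parallel to the proof of Theorem \ref{thm:doublepoleGA}, with the modulus shifting from $2k+3$ to $2k+2$ and the extra $(-q)_\infty$ factor arising precisely from $(-wq)_\infty|_{w=1}$.
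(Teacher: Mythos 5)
Your proposal is correct and follows essentially the same route as the paper's own proof: specialize the Bailey-pair output at $w=1$ (where $w^n(-w^{-1}q)_n/(-wq)_n=1$, changing only the outer exponent to $(k+1)n^2$), reuse the triangular telescoping from Theorem \ref{thm:doublepoleGA} to get $\frac{(-1)^nq^{(k+1)n^2+in}}{1-q}\bigl(1-q^{(k-i+1)(2n+1)}\bigr)$, and finish with the Jacobi triple product and \eqref{eqn:betakiinf_e}. Your explicit verification of the reflection $n\mapsto -n-1$ is a detail the paper leaves implicit inside its appeal to the triple product, but the argument is the same.
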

\begin{proof}
	The $i=k$ case was handled in \cite[Section 5]{JenMil-double2}.
	The proof here is exactly analogous to the proof of Theorem \ref{thm:doublepoleGA}.

	For a fixed $n>0$,  the outer factor of \eqref{eqn:GAalphaki} for $\alpha_n^{(k,i,0)}(1,q)$ becomes:
	\begin{align*}
		\frac{(-1)^{n+(k-i)} q^{(k+1)n^2} }{(1-q)}.
	\end{align*}
	Rest of the analysis being exactly the same as in the proof of Theorem \ref{thm:doublepoleGA}, we see that for $n>0$:
	\begin{align*}
		(-1)^{k-i}\alpha_{n}^{(k,i,0)}(1,q) &+ 2\sum_{j=i+1}^k (-1)^{k-j}\alpha_{n}^{(k,j,0)}(1,q)=
		\frac{(-1)^{n}q^{(k+1)n^2}}{1-q}\left(q^{in}-q^{(2k-i+2)n+(k-i+1)}\right)
		\nonumber\\
		&=\frac{(-1)^{n}q^{(k+1)n^2+in}}{1-q}\left(1-q^{(k-i+1)(2n+1)}\right).
	\end{align*}
	The formula for $\alpha_0^{(k,i,0)}(w,q)$ being independent of $w$, we again have:
	\begin{align*}
		(-1)^{k-i}\alpha_{0}^{(k,i,0)}(1,q) + 2\sum_{j=i+1}^k (-1)^{k-j}\alpha_{0}^{(k,j,0)}(1,q)
		=1+q+\cdots +q^{k-i} = \frac{1-q^{k-i+1}}{1-q}.
   \end{align*}
   Combining, we get:
   \begin{align*}
		(-1)^{k-i}&\beta_{\infty}^{(k,i,0)}(1,q) + 2\sum_{j=i+1}^k (-1)^{k-j}\beta_{\infty}^{(k,j,0)}(1,q) \nonumber\\
		&=\dfrac{1}{(q)_{\infty}(q^2\,\,;\,\,q)_{\infty}}
		\sum_{n=0}^{\infty}\left((-1)^{k-i}\alpha_{n}^{(k,i,0)}(1,q) + 2\sum_{j=i+1}^k (-1)^{k-j}\alpha_{n}^{(k,j,0)}(1,q)\right) \nonumber\\
		&=\dfrac{1}{(q)_{\infty}(q^2\,\,;\,\,q)_{\infty}}
		\sum_{n=0}^{\infty} \frac{(-1)^{n}q^{(k+1)n^2+in}}{1-q}\left(1-q^{(k-i+1)(2n+1)}\right)\nonumber\\
		&= \dfrac{(q^{k-i+1}, q^{k+i+1}, q^{2k+2}\,\,;\,\,q^{2k+2})_{\infty}}{(q)_{\infty}^2}.
	\end{align*}
	Where, in the very last step, we have used the Jacobi triple-product identity.
	Now the required statement follows, using \eqref{eqn:betakiinf_e}.
\end{proof}	

\subsection{Andrews-Bressoud series with $w \to q^{1/2}$}

We now consider the case $w\rightarrow q^{1/2}$ and deduce double-pole representations for \emph{some} of the
Andrews-Bressoud (even modulus) series.

\begin{thm}\label{thm:doublepoleAB2}
	For $k\geq 1$, $0\leq i\leq k$, we have the following:
	\begin{align*}
		(-1)^{k-i}&\dn_{2k,k+i+1}(q^{1/2},q) + 2\sum_{j=i+1}^k (-1)^{k-j}\dn_{2k,k+j+1}(q^{1/2},q) 
		\nonumber\\
		&\quad +q^{1/2}\left((-1)^{k-i-1}\dn_{2k,k+i+2}(q^{1/2},q) + 2\sum_{j=i+2}^k (-1)^{k-j}\dn_{2k,k+j+1}(q^{1/2},q)\right) 
		\nonumber\\
		&=\dfrac{(-q^{1/2},\,\,q)_{\infty}(q^{k-i+\frac{1}{2}}, q^{k+i+\frac{3}{2}}, q^{2k+2}\,\,;\,\,q^{2k+2})_{\infty}}{(q)^{2k+1}_{\infty}}.
	\end{align*}
Note that if $i=k$ then we do not have the terms multiplied with $q^{1/2}$ in the left-hand side.
\end{thm}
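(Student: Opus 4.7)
The plan is to follow the pattern of Theorems~\ref{thm:doublepoleGA} and~\ref{thm:doublepoleAB}: start from the Bailey pair $(\alpha^{(k,i,0)},\beta^{(k,i,0)})$ of Section~\ref{sec:dpnsBailey}, specialize $w=q^{1/2}$, form the same alternating combination of the $\alpha_n^{(k,j,0)}$'s with $j=i,i+1,\dots,k$ that produced the collapsing ``triangle'' in the preceding proofs, and pass to the limit $n\to\infty$ in the Bailey identity. The only new feature caused by $w=q^{1/2}$ is the factor
\begin{align*}
	\frac{w^n(-w^{-1}q)_n}{(-wq)_n}\bigg|_{w=q^{1/2}}=\frac{q^{n/2}(1+q^{1/2})}{1+q^{n+1/2}},
\end{align*}
which decorates every $\alpha_n^{(k,i,0)}(q^{1/2},q)$ with an inconvenient denominator $1+q^{n+1/2}$.

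The key observation is that this denominator disappears once one combines the $i$-th combination with $q^{1/2}$ times the $(i{+}1)$-st combination (interpreted as zero when $i=k$, a case in which $(1-q^{2n+1})=(1-q^{n+1/2})(1+q^{n+1/2})$ already provides the cancellation on its own). Indeed, after carrying out the telescoping column-sum used in the proof of Theorem~\ref{thm:doublepoleGA}, the $i$-th combination produces for $n\geq 1$ the ``pattern'' $q^{in}-q^{(2k-i+2)n+(k-i+1)}$, and an elementary check gives
\begin{align*}
	\bigl(q^{in}-q^{(2k-i+2)n+(k-i+1)}\bigr)+q^{1/2}\bigl(q^{(i+1)n}-q^{(2k-i+1)n+(k-i)}\bigr)
	=(1+q^{n+1/2})\,q^{in}\bigl(1-q^{(k-i+1/2)(2n+1)}\bigr),
\end{align*}
so the factor $1+q^{n+1/2}$ in the numerator precisely kills the denominator. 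The $n=0$ term fits the same formula for free because $\alpha_0^{(k,i,0)}$ does not depend on $w$. Hence for every $n\geq 0$ the combined summand reduces to
\begin{align*}
	\frac{(-1)^n(1+q^{1/2})q^{(k+1)n^2+(2i+1)n/2}}{1-q}\bigl(1-q^{(k-i+1/2)(2n+1)}\bigr).
\end{align*}

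Summing over $n\geq 0$ and applying the standard reflection $n\mapsto -n-1$ to the subtracted piece turns this half-sum into the bilateral sum $\sum_{n\in\ZZ}(-1)^n q^{(k+1)n^2+(i+1/2)n}$, which Jacobi's triple product~\eqref{eqn:jtp} with $a=-q^{k+i+3/2}$, $b=-q^{k-i+1/2}$, $ab=q^{2k+2}$ identifies as $(q^{k-i+1/2},q^{k+i+3/2},q^{2k+2};q^{2k+2})_\infty$. Translating back via~\eqref{eqn:betakiinf_e} supplies the prefactor $(-q^{3/2};q)_\infty/(q)_\infty^{2k-1}$, and since $(1+q^{1/2})(-q^{3/2};q)_\infty=(-q^{1/2};q)_\infty$, the right-hand side of the theorem falls out. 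The main obstacle, and the reason why $w=q^{1/2}$ is the specialization that produces even-modulus Andrews--Bressoud series of this particular shape, is exactly the algebraic identity displayed above: once the $(1+q^{n+1/2})$ denominator has been absorbed, everything else is parallel to the $w=0$ and $w=1$ cases already handled.
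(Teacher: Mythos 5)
Your proposal is correct and follows essentially the same route as the paper's proof: the same telescoping combination of the $\alpha_n^{(k,j,0)}(q^{1/2},q)$'s, the same key factorization absorbing the $(1+q^{n+1/2})$ denominator (your displayed identity is exactly the step the paper performs), and the same passage to the bilateral sum and Jacobi triple product. No gaps.
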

\begin{proof}
	For a fixed $n>0$,  the outer factor in $\alpha_n^{(k,i,0)}(q^{1/2},q)$ becomes:
	\begin{align*}
		\frac{(-1)^{n+(k-i)} q^{(k+1)n^2+\frac{n}{2}}(1+q^{\frac{1}{2}}) }{(1-q)(1+q^{n+\frac{1}{2}})}.
	\end{align*}
	Rest of the analysis being exactly the same as in the proof of Theorem \ref{thm:doublepoleGA}, we see that for $n>0$:
	\begin{align*}
		(-1)^{k-i}&\alpha_{n}^{(k,i,0)}(q^{1/2},q) + 2\sum_{j=i+1}^k (-1)^{k-j}\alpha_{n}^{(k,j,0)}(q^{1/2},q)
		\nonumber\\
		&\qquad+q^{1/2}\left((-1)^{k-i-1}\alpha_{n}^{(k,i+1)}(q^{1/2},q) + 2\sum_{j=i+2}^k (-1)^{k-j}\alpha_{n}^{(k,j,0)}(q^{1/2},q)\right)
		\nonumber\\
		&=\frac{(-1)^{n}q^{(k+1)n^2+\frac{n}{2}}(1+q^{\frac{1}{2}})}{(1-q)(1+q^{n+\frac{1}{2}})}
		\left(q^{in}-q^{(2k-i+2)n+(k-i+1)}+q^{1/2}\left(q^{(i+1)n}-q^{(2k-i+1)n+(k-i)}\right)\right)
		\nonumber\\
		&=\frac{(-1)^{n}q^{(k+1)n^2+\frac{n}{2}}(1+q^{\frac{1}{2}})}{(1-q)(1+q^{n+\frac{1}{2}})}
		(q^{in} - q^{(2k-i+1)n+(k-i+\frac{1}{2})})(1+q^{n+\frac{1}{2}})
		\nonumber\\
		&=\frac{(-1)^{n}q^{(k+1)n^2+\frac{n}{2}}}{(1-q^{\frac{1}{2}})}
		(q^{in} - q^{(2k-i+1)n+(k-i+\frac{1}{2})})
		\nonumber\\
		&=\frac{(-1)^{n}q^{(k+1)n^2+(i+\frac{1}{2})n}}{(1-q^{\frac{1}{2}})}
		(1 - q^{(k-i+\frac{1}{2})(2n+1)}).
	\end{align*}
	Similarly, we see:
	\begin{align*}
		(-1)^{k-i}&\alpha_{0}^{(k,i,0)}(q^{1/2},q) + 2\sum_{j=i+1}^k (-1)^{k-j}\alpha_{0}^{(k,j,0)}(q^{1/2},q)
		\nonumber\\
		&\qquad+q^{1/2}\left((-1)^{k-i-1}\alpha_{0}^{(k,i+1)}(q^{1/2},q) + 2\sum_{j=i+2}^k (-1)^{k-j}\alpha_{0}^{(k,j,0)}(q^{1/2},q)\right)
		\nonumber\\
		&=\frac{1-q^{k-i+1}}{1-q}+q^{\frac{1}{2}}\frac{1-q^{k-i}}{1-q}
		=\frac{(1+q^{\frac{1}{2}})(1-q^{k-i+\frac{1}{2}})}{1-q}
		=\frac{1-q^{k-i+\frac{1}{2}}}{1-q^{\frac{1}{2}}}.
	\end{align*}
	Combining, we see:
	\begin{align*}
		(-1)^{k-i}&\beta_{\infty}^{(k,i,0)}(q^{1/2},q) + 2\sum_{j=i+1}^k (-1)^{k-j}\beta_{\infty}^{(k,j,0)}(q^{1/2},q)
		\nonumber\\
		&\qquad +q^{1/2}\left.\left((-1)^{k-i-1}\beta_{\infty}^{(k,i+1)}(q^{1/2},q) + 2\sum_{j=i+2}^k (-1)^{k-j}\beta_{\infty}^{(k,j,0)}(q^{1/2},q)\right)\right)
		\nonumber\\
		&=\frac{1}{(q)_{\infty}(q^2;\,\,q)_{\infty}}
		\sum_{n=0}^\infty\left(
		(-1)^{k-i}\alpha_{n}^{(k,i,0)}(q^{1/2},q) + 2\sum_{j=i+1}^k (-1)^{k-j}\alpha_{n}^{(k,j,0)}(q^{1/2},q)\right.
		\nonumber\\
		&\qquad +q^{1/2}\left.\left((-1)^{k-i-1}\alpha_{n}^{(k,i+1)}(q^{1/2},q) + 2\sum_{j=i+2}^k (-1)^{k-j}\alpha_{n}^{(k,j,0)}(q^{1/2},q)\right)\right)
		\nonumber\\			
		&=\frac{1}{(q)_{\infty}(q^{2};\,\,q)_{\infty}(1-q^{\frac{1}{2}})}
		\sum_{n=0}^{\infty}
		\left((-1)^{n}q^{(k+1)n^2+(i+\frac{1}{2})n}
		(1-q^{(k-i+\frac{1}{2})(2n+1)})\right)
		\nonumber\\
		&=\dfrac{(q^{k-i+\frac{1}{2}}, q^{k+i+\frac{3}{2}}, q^{2k+2}\,\,;\,\,q^{2k+2})_{\infty}}{(q)_{\infty}(q^{2};\,\,q)_{\infty}(1-q^{\frac{1}{2}})}.
	\end{align*}
	Now the required statement follows, using \eqref{eqn:betakiinf_e}.
Note that after we let $q\mapsto q^2$, we get proper identities modulo $4k+4$.
\end{proof}

\section{Identities for Rogers' false theta functions}
\label{sec:ft}

Here we give identities involving double pole sums $\dn_{t,s}$
with an odd number of summation variables $t=2k-1$. 
Now, instead of ``sum=product'' identities, we get identities involving
Rogers' false theta function.
All the proofs are similar to the ones above.

\subsection{Identities with $w\rightarrow 0$}
We start the case $w= 0$ and deduce double-pole representations of all unary false theta functions, thus generalizing \cite[Theorem 5.1]{JenMil-double1}.

\begin{thm} \label{thm:doublepoleFalseAG}
	For $k \geq 1$, $0 \leq i \leq k$, we have the following identities:
	\begin{align*}
	 (-1)^{k-i}\dn_{2k-1,k+i}(0,q) + 2\sum_{j=i+1}^k (-1)^{k-j}\dn_{2k-1,k+j}(0,q) 
		&
		=\frac{1}{(q)_\infty^{2k}} \sum_{n \in \mathbb{Z}} \sgn^*(n) q^{ (k+1)n^2+in}.
	\end{align*}
	Note that for $i=0$, the right-hand side reduces to $\dfrac{1}{(q)_\infty^{2k}}$.
\end{thm}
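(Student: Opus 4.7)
The plan is to adapt the proof of Theorem \ref{thm:doublepoleGA} to the odd-variables setting of Theorem \ref{thm:dp-fermionic}: work with the Bailey pair $(\alpha^{(k,i,1)},\beta^{(k,i,1)})$ relative to $a=q$, specialize $w\to 0$, and take $n\to\infty$ in the defining Bailey relation to recover $\beta_\infty^{(k,i,1)}(0,q)$, which by \eqref{eqn:betakiinf_o} is $(q)_\infty^{2k-2}\dn_{2k-1,k+i}(0,q)$.

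First I would specialize the formula \eqref{eqn:GAalphaki} at $w=0$ with $t=2k-1$. The crucial parity observation is that the sign $(-1)^{(t+1)n+(k-i)}$ collapses to $(-1)^{k-i}$, independent of $n$, in contrast to the even case where it carried an extra $(-1)^n$. The exponent $\tfrac{(t+2)n^2+(\mu-\lambda-2i)n}{2}$ with $\mu-\lambda-2i=-1$, combined with the $w\to 0$ contribution $q^{n(n+1)/2}$ coming from $w^n(-w^{-1}q)_n/(-wq)_n$, produces the outer factor $\tfrac{(-1)^{k-i}q^{(k+1)n^2}}{1-q}$. The triangular cancellation displayed in \eqref{eqn:trianglecancel} then applies verbatim to the bracketed polynomial factor, giving, for every $n\geq 0$,
\begin{align*}
(-1)^{k-i}\alpha_n^{(k,i,1)}(0,q) + 2\sum_{j=i+1}^k(-1)^{k-j}\alpha_n^{(k,j,1)}(0,q) = \frac{q^{(k+1)n^2+in}}{1-q}\bigl(1-q^{(k-i+1)(2n+1)}\bigr).
\end{align*}

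Next, summing over $n\geq 0$ and multiplying by $\tfrac{1}{(q)_\infty (q^2;q)_\infty}=\tfrac{1-q}{(q)_\infty^2}$ (the $n\to\infty$ limit of $\tfrac{1}{(q)_{n-r}(q^2;q)_{n+r}}$) produces the corresponding combination of $\beta_\infty^{(k,i,1)}(0,q)$ values. The resulting $q$-series is precisely $\Psi(q^{k+i+1},q^{k-i+1})=\sum_{n\in\ZZ}\sgn^*(n)\,q^{(k+1)n^2+in}$ via \eqref{eqn:Psi}. Invoking \eqref{eqn:betakiinf_o} at $w=0$ and dividing by $(q)_\infty^{2k-2}$ yields the stated identity; the $i=0$ specialization further reduces the false theta function to $1$ by the Jacobi triple product, recovering the $1/(q)_\infty^{2k}$ right-hand side.

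The only real conceptual point, rather than a genuine obstacle, is the parity observation at the very start. It is precisely the absence of the alternating factor $(-1)^n$ in the odd-variables case that replaces the Jacobi triple product appearing in Theorem \ref{thm:doublepoleGA} by Rogers' false theta function here. Once this is noted, the triangular telescoping, the summation, and the extraction from the Bailey pair are all direct transcriptions of the even-variable argument.
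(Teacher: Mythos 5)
Your argument for $k\geq 2$ is correct and is essentially the paper's own proof (the paper in fact only sketches it, saying the computation parallels Theorem \ref{thm:doublepoleGA} with the $(-1)^n$ absent from the outer factor; you have filled in exactly the intended details, and the parity bookkeeping, the exponent $(k+1)n^2$, the telescoping, and the identification with $\Psi(q^{k+i+1},q^{k-i+1})$ all check out).

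There is, however, a genuine gap: the case $k=1$. The theorem is stated for $k\geq 1$, but the entire Bailey-pair apparatus you invoke is only available for $t=2k-1\geq 2$: the sequence of moves defining $\beta_n^{(k,i,1)}$, the limit formula \eqref{eqn:betakiinf_o}, and the odd-$t$ half of Theorem \ref{thm:dp-fermionic} are all set up under the hypothesis $t\geq 2$ (indeed, for $t=1$ and $i=k=1$ the prescription would ask you to apply the move \eqref{move:Fb} a negative number of times, and the underlying Proposition \ref{prop:doublepoleBailey} also assumes $t\geq 2$). So your proof does not cover $\dn_{1,1}$ and $\dn_{1,2}$. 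The paper handles $k=1$ by a separate direct computation from Proposition \ref{prop:doublepoleBailey1}: at $w=0$ that proposition gives $\dn_{1,2}(0,q)=\frac{1}{(q)_\infty^2}\sum_{j\geq 0}(-1)^jq^{\frac{j^2+j}{2}}=\frac{1}{(q)_\infty^2}\sum_{n\in\ZZ}\sgn^*(n)q^{2n^2+n}$ (the $i=1$ case), and the combination $-\dn_{1,1}(0,q)+2\dn_{1,2}(0,q)=\frac{1}{(q)_\infty^2}\sum_{j\geq 0}(-1)^jq^{\frac{j^2+j}{2}}(1+q^{j+1})$ telescopes to $\frac{1}{(q)_\infty^2}$ (the $i=0$ case). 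You would need to add such an argument to make the proof complete. A minor further point: for $i=0$ the right-hand side collapses to $1/(q)_\infty^{2k}$ simply because the terms for $n$ and $-n$ in the false theta sum cancel pairwise, not by the Jacobi triple product, which does not apply to a $\sgn^*$-weighted (false) theta series.
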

\begin{proof} 
	For $k\geq 2$, Theorem \ref{thm:dp-fermionic} applies and
	the proof is similar to the proof of 
	Theorem \ref{thm:doublepoleGA}, with one of significant changes being
	that the outer factor analogous to \eqref{eqn:AGout} does not have the $(-1)^n$ part.
	We omit rest of the details.

	For $k=1$, we use Proposition \ref{prop:doublepoleBailey1} to see:
	\begin{align*}
	\dn_{1,2}(0,q) &= \dfrac{1}{(q)_\infty^2}\sum_{j\geq 0}(-1)^j q^{\frac{j^2+j}{2}}
	=\dfrac{1}{(q)_\infty^2}\sum_{n\in\ZZ}\sgn^*(n)q^{2n^2+n}\\
	-\dn_{1,1}(0,q)+2\dn_{1,2}(0,q) &= 
	\dfrac{1}{(q)_\infty^2}\sum_{j\geq 0}(-1)^j q^{\frac{j^2+j}{2}}(1+q^{j+1})
	=\dfrac{1}{(q)_\infty^2},
	\end{align*}
	as required.
\end{proof}
\begin{rem} The above result gives new $q$-series representations of distinguished characters of irreducible modules of the $(1,k+1)$-singlet vertex algebra \cite{BM}. 
\end{rem}

\begin{thm}\label{thm:doublepoleFalseAB}
	For $k\geq 1$, $0\leq i\leq k$, we have the following: 	\begin{align*}
		(-1)^{k-i}\dn_{2k-1,k+i}(1,q) + 2\sum_{j=i+1}^k (-1)^{k-j}\dn_{2k-1,k+j}(1,q) 
		=
		\dfrac{(-q;\,\,q)_{\infty}}{(q)_{\infty}^{2k}}
		\sum_{n\in\ZZ}\sgn^*(n)q^{(k+\frac{1}{2})n^2 + (i-\frac{1}{2})n}.
	\end{align*}
\end{thm}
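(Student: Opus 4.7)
\smallskip

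\noindent\textbf{Proof proposal for Theorem \ref{thm:doublepoleFalseAB}.}
My plan is to run the same strategy used in the proofs of Theorems \ref{thm:doublepoleGA}--\ref{thm:doublepoleAB}, but on the odd-$t$ side of Theorem \ref{thm:dp-fermionic} (i.e., using $\alpha_n^{(k,i,1)}$ and $\beta_n^{(k,i,1)}$) and with $w=1$. First I would treat the generic case $k\geq 2$ so that Theorem \ref{thm:dp-fermionic} applies directly. Substituting $w=1$ in \eqref{eqn:GAalphaki}, the ratio $w^n(-w^{-1}q)_n/(-wq)_n$ collapses to $(-q)_n/(-q)_n=1$. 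With $t=2k-1$ (so $\mu-\lambda-2i=-1$ and $(t+1)n=2kn$ has even parity), the outer factor of \eqref{eqn:GAalphaki} reduces, for $n>0$, to
$$\frac{(-1)^{k-i}\,q^{\frac{(2k+1)n^2-n}{2}}}{1-q}.$$

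Next I would observe that the polynomial factor inside the parentheses of \eqref{eqn:GAalphaki} depends only on $k,i,n$, not on $p$, so the triangle-shaped sign-cancellation used in Theorem \ref{thm:doublepoleGA} (two arrays, one advancing in powers of $q^n$ and the other in powers of $q^{n+1}$) goes through verbatim. Combining, for $n>0$,
$$(-1)^{k-i}\alpha_n^{(k,i,1)}(1,q)+2\sum_{j=i+1}^{k}(-1)^{k-j}\alpha_n^{(k,j,1)}(1,q)=\frac{q^{\frac{(2k+1)n^2+(2i-1)n}{2}}}{1-q}\bigl(1-q^{(k-i+1)(2n+1)}\bigr),$$
while the $n=0$ contribution is $(1-q^{k-i+1})/(1-q)$, exactly matching the $n=0$ specialization of the previous expression (the $w$-dependent outer factor did not affect the $n=0$ analysis, since \eqref{eqn:GAalphaki0} is $w$-free).

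The key step is then to recognize the resulting sum as a false theta function. I would split the right-hand side and apply the substitution $n\mapsto -n-1$ to the subtracted piece: the exponent
$$(k+\tfrac12)n^2+(i-\tfrac12)n+(k-i+1)(2n+1)$$
becomes $(k+\tfrac12)m^2+(i-\tfrac12)m$ after $n=-m-1$, so
\begin{align*}
\sum_{n\geq 0}q^{(k+\frac12)n^2+(i-\frac12)n}\bigl(1-q^{(k-i+1)(2n+1)}\bigr)
=\sum_{n\in\ZZ}\sgn^*(n)\,q^{(k+\frac12)n^2+(i-\frac12)n}.
\end{align*}
Now I would assemble the Bailey output: since $\beta_\infty=\frac{1-q}{(q)_\infty^2}\sum_{r\geq 0}\alpha_r$ (because $a=q$ gives $(q)_\infty(q^2;q)_\infty=(q)_\infty^2/(1-q)$), the $1-q$'s cancel and, invoking \eqref{eqn:betakiinf_o}, I recover
$$\frac{(q)_\infty^{2k-2}}{(-q)_\infty}\Bigl[(-1)^{k-i}\dn_{2k-1,k+i}(1,q)+2\sum_{j=i+1}^{k}(-1)^{k-j}\dn_{2k-1,k+j}(1,q)\Bigr]=\frac{1}{(q)_\infty^2}\sum_{n\in\ZZ}\sgn^*(n)\,q^{(k+\frac12)n^2+(i-\frac12)n},$$
which rearranges to the claim. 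For the boundary case $k=1$ (where Theorem \ref{thm:dp-fermionic} does not apply), I would follow the $k=1$ argument in the proof of Theorem \ref{thm:doublepoleFalseAG}: invoke Proposition \ref{prop:doublepoleBailey1} at $w=1$ (so the $(-w)_n$ factor becomes $(-1)_n$), compute $\dn_{1,1}(1,q)$ and $\dn_{1,2}(1,q)$ in closed form via \eqref{eqn:euler2} and \eqref{eqn:qbin}, and check the two required specializations ($i=0,1$) directly against the corresponding false theta series.

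The main obstacle I anticipate is purely bookkeeping: verifying that the triangle-cancellation of \eqref{eqn:trianglecancel} really does transfer unchanged to the $p=1$ setting and lining up the signs coming from $(-1)^{(t+1)n}$ being $+1$ here (whereas it was $(-1)^n$ in the $t=2k$ case), so that the false theta function has the correct $(i-\tfrac12)n$ linear exponent (rather than $(i+\tfrac12)n$) as stated.
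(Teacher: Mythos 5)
Your proposal is correct and follows essentially the same route as the paper: for $k\geq 2$ the paper simply declares the argument ``similar to the proof of Theorem \ref{thm:doublepoleAB}'' (i.e., specialize \eqref{eqn:GAalphaki} at $w=1$ on the odd-$t$ side, reuse the triangle cancellation, and apply the $n\mapsto -n-1$ reflection to obtain the false theta series), and it handles $k=1$ separately via Proposition \ref{prop:doublepoleBailey1}, exactly as you do. Your computations of the outer factor, the linear exponent $(i-\tfrac12)n$, and the disappearance of the $(-1)^n$ all check out; the only detail worth adding is that the $k=1$ verification ultimately rests on Rogers' classical identity \eqref{eqn:ftRogers}, not just Euler's and the $q$-binomial theorem.
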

\begin{proof}
	Again, the proof for $k\geq 2$ is similar to the proof of Theorem \ref{thm:doublepoleAB}.
	For $k=1$, using Proposition \ref{prop:doublepoleBailey1} with $w=1$, we see:
	\begin{align}
		\dn_{1,2}(1,q)&=\frac{(-q;q)_\infty}{(q)_\infty^2}\sum_{j\geq 0}(-1)^j\frac{q^{\frac{j^2+j}{2}}}{(-q)_j}
		=\frac{(-q;q)_\infty}{(q)_\infty^2}\sum_{n\in\ZZ}\sgn^*(n)q^{\frac{3n^2+n}{2}}\label{eqn:ftRogers}\\
		-\dn_{1,2}(1,q)+2\dn_{1,2}(1,q)&=
		\frac{(-q;q)_\infty}{(q)_\infty^2}\sum_{j\geq 0}(-1)^j{q^{\frac{j^2+j}{2}}}
		\left(\frac{2}{(-q)_j}-\frac{1-q^{j+1}}{(-q)_{j+1}} \right)\nonumber\\
		&=\frac{(-q;q)_\infty}{(q)_\infty^2}\sum_{j\geq 0}(-1)^j
		{q^{\frac{j^2+j}{2}}}
		\left(
		\frac{1}{(-q)_j}+2\frac{q^{j+1}}{(-q)_{j+1}}
		\right) 
		\nonumber\\
		&=\frac{(-q;q)_\infty}{(q)_\infty^2}
		\left(\sum_{j\geq 0}(-1)^j\frac{q^{\frac{j^2+j}{2}}}{(-q)_{j}}
		+2\sum_{j\geq 0}(-1)^j\frac{q^{\frac{j^2+j}{2}+j+1}}{(-q)_{j+1}}
		\right)\nonumber\\
		&=\frac{(-q;q)_\infty}{(q)_\infty^2}
		\left(\sum_{j\geq 0}(-1)^j\frac{q^{\frac{j^2+j}{2}}}{(-q)_{j}}
		-2\sum_{j\geq 1}(-1)^j\frac{q^{\frac{j^2+j}{2}}}{(-q)_{j}}
		\right)\nonumber\\
		&=\frac{(-q;q)_\infty}{(q)_\infty^2}
		\left(1-\sum_{j\geq 1}(-1)^j\frac{q^{\frac{j^2+j}{2}}}{(-q)_{j}}\right)
		\nonumber\\
		&=\frac{(-q;q)_\infty}{(q)_\infty^2}\sum_{n\in\ZZ}\sgn^*(n)q^{\frac{3n^2-n}{2}}.\nonumber
	\end{align}
	Right-hand side of \eqref{eqn:ftRogers} is well-known due to Rogers \cite{Rog} (see also \cite{Sil-book}).
\end{proof}

\begin{thm}\label{thm:doublepoleFalseAB2}
	For $k\geq 1$, $0\leq i\leq k$, we have the following:
	\begin{align*}
		(-1)^{k-i}&\dn_{2k-1,k+i}(q^{\frac12},q) + 2\sum_{j=i+1}^k (-1)^{k-j}\dn_{2k-1,k+j}(q^{\frac12},q) 
		\nonumber\\
		&\quad +q^{\frac12}\left((-1)^{k-i-1}\dn_{2k-1,k+i+1}(q^{\frac12},q) + 2\sum_{j=i+2}^k (-1)^{k-j}\dn_{2k-1,k+j}(q^{\frac12},q)\right) 
		\nonumber\\
		&=
		\dfrac{(-q^{\frac12};\,\,q)_{\infty}}{(q)_{\infty}^{2k}}
		\sum_{n\in\ZZ}\sgn^*(n)q^{(k+\frac{1}{2})n^2 + in}.
	\end{align*}
\end{thm}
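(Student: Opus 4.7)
The plan is to mirror the proof of Theorem \ref{thm:doublepoleAB2} (which handled the even case $t=2k$ at the same specialization $w=q^{1/2}$), now for odd $t=2k-1$. For $k\geq 2$, Theorem \ref{thm:dp-fermionic} supplies the needed Bailey pair $(\alpha^{(k,i,1)},\beta^{(k,i,1)})$; the base case $k=1$ lies outside that theorem and must be treated separately via Proposition \ref{prop:doublepoleBailey1}, in the same style as the $k=1$ endgames of Theorems \ref{thm:doublepoleFalseAG} and \ref{thm:doublepoleFalseAB}.

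For $k\geq 2$, the key input is \eqref{eqn:GAalphaki} with $p=1$ and $w=q^{1/2}$. Since $t+1=2k$ is even the sign $(-1)^{(t+1)n}$ drops out, and $\mu-\lambda-2i=-1$; using $(-q^{1/2};q)_n/(-q^{3/2};q)_n=(1+q^{1/2})/(1+q^{n+1/2})$, the outer factor for $n>0$ simplifies to $\tfrac{(-1)^{k-i}q^{(k+\frac12)n^2}(1+q^{1/2})}{(1-q)(1+q^{n+1/2})}$. The triangular cancellations used in the proofs of Theorems \ref{thm:doublepoleGA} and \ref{thm:doublepoleAB2}, applied separately to the group without $q^{1/2}$ (parameter $i$) and to the group multiplied by $q^{1/2}$ (parameter $i+1$), collapse the bracketed expression in \eqref{eqn:GAalphaki} to $q^{in}-q^{(2k-i+2)n+(k-i+1)}+q^{1/2}\bigl(q^{(i+1)n}-q^{(2k-i+1)n+(k-i)}\bigr)$, which factors as $(1+q^{n+1/2})\bigl(q^{in}-q^{(2k-i+1)n+(k-i+\frac12)}\bigr)$. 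The factor $1+q^{n+1/2}$ cancels with the outer denominator, leaving the closed form $\tfrac{q^{(k+\frac12)n^2+in}\bigl(1-q^{(k-i+\frac12)(2n+1)}\bigr)}{1-q^{1/2}}$ for $n>0$; the $n=0$ contribution, read off from the $w$-independent \eqref{eqn:GAalphaki0}, equals $\tfrac{1-q^{k-i+\frac12}}{1-q^{1/2}}$, which coincides with the $n=0$ value of the closed form, so the same expression is valid uniformly for $n\geq 0$.

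Summing over $n$, dividing by $(q)_\infty(q^2;q)_\infty$ (from $\beta=L(q)\alpha$), and invoking \eqref{eqn:betakiinf_o} converts each $\beta_\infty^{(k,j,1)}(q^{1/2},q)$ into $\dn_{2k-1,k+j}(q^{1/2},q)$ modulo the factor $(-q^{3/2};q)_\infty/(q)_\infty^{2k-2}$, which combines with the remaining infinite products to produce the stated prefactor $(-q^{1/2};q)_\infty/(q)_\infty^{2k}$. The second monomial $q^{(k+\frac12)n^2+(2k-i+1)n+(k-i+\frac12)}$ is converted to the negative-index contribution of a Rogers false theta by the substitution $n\mapsto -n-1$, whose image has exponent $(k+\frac12)n^2+in$; recombining the two ranges yields the one-sided $\sgn^*$-sum $\sum_{n\in\ZZ}\sgn^*(n)q^{(k+\frac12)n^2+in}$ appearing on the right. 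I expect the main obstacle to be a clean book-keeping verification of the $(1+q^{n+1/2})$ factorization that kills the awkward denominator — every other step is a direct analogue of arguments already carried out for Theorems \ref{thm:doublepoleAB2} and \ref{thm:doublepoleFalseAB}, and the $k=1$ reduction via Proposition \ref{prop:doublepoleBailey1} proceeds exactly as in the closing computation of Theorem \ref{thm:doublepoleFalseAB}.
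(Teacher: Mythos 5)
Your proposal is correct and follows essentially the same route as the paper: for $k\geq 2$ it runs the Bailey-pair combination of Theorem \ref{thm:dp-fermionic} (odd case $p=1$) through the same triangular cancellation and $(1+q^{n+1/2})$ factorization used for Theorem \ref{thm:doublepoleAB2}, and for $k=1$ it falls back on Proposition \ref{prop:doublepoleBailey1} exactly as the paper does. All the key computations you flag (the outer factor $\tfrac{(-1)^{k-i}q^{(k+\frac12)n^2}(1+q^{1/2})}{(1-q)(1+q^{n+1/2})}$, the collapse to $(1+q^{n+1/2})\bigl(q^{in}-q^{(2k-i+1)n+(k-i+\frac12)}\bigr)$, the matching $n=0$ value, and the bookkeeping of $(-q^{3/2};q)_\infty$ via \eqref{eqn:betakiinf_o}) check out.
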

\begin{proof}
	For $k=1$, using Proposition \ref{prop:doublepoleBailey1} with $w=q^{1/2}$, we see:
	\begin{align}
		\dn_{1,2}(1,q)&=\frac{(-q^{\frac12};q)_\infty}{(q)_\infty^2}\sum_{n\geq 0}(-1)^n\frac{q^{\frac{n^2+n}{2}}}{(-q^{\frac12})_{n+1}}
		=\frac{(-q^{\frac12};q)_\infty}{(q)_\infty^2}
		\sum_{n\in\ZZ}\sgn^*(n)q^{\frac{3n^2}{2}+n}\label{eqn:ftRogers2}\\
		-\dn_{1,1}+(2+q^{\frac12})\dn_{1,2}&=
		\frac{(-q^{1/2};q)_\infty}{(q)_\infty^2}\sum_{n\geq 0}(-1)^n{q^{\frac{n^2+n}{2}}}
		\left( \dfrac{2+q^{\frac12}}{(-q^{\frac12})_{n+1}}-\dfrac{1-q^{n+1}}{(-q^{\frac12})_{n+2}}\right)\nonumber\\
		&=\frac{(-q^{\frac12};q)_\infty}{(q)_\infty^2}\sum_{n\geq 0}(-1)^n\dfrac{{q^{\frac{n^2+n}{2}}}(1+q^{\frac12})}{(-q^{1/2})_{n+2}}
		\left(1+q^{n+\frac32}+q^{n+1}\right)\nonumber\\
		&=\frac{(-q^{\frac12};q)_\infty(1+q^{\frac12})}{(q)_\infty^2}
		\left(\sum_{n\geq 0}(-1)^n\dfrac{{q^{\frac{n^2+n}{2}}}}{(-q^{\frac12})_{n+1}}
		+\sum_{n\geq 0}(-1)^n\dfrac{{q^{\frac{n^2+n}{2}+n+1}}}{(-q^{\frac12})_{n+2}}\right)\nonumber\\
		&=\frac{(-q^{\frac12};q)_\infty(1+q^{\frac12})}{(q)_\infty^2}
		\left(\sum_{n\geq 0}(-1)^n\dfrac{{q^{\frac{n^2+n}{2}}}}{(-q^{\frac12})_{n+1}}
		-\sum_{n\geq 1}(-1)^n\dfrac{{q^{\frac{n^2+n}{2}}}}{(-q^{\frac12})_{n+1}}\right)\nonumber\\
		&=\frac{(-q^{\frac12};q)_\infty}{(q)_\infty^2}.\nonumber
	\end{align}
	Here, again, \eqref{eqn:ftRogers2} is due to Rogers \cite{Rog} (see also \cite{Sil-book}).
\end{proof}	

\begin{rem} Theorems  \ref{thm:doublepoleFalseAB2} and \ref{thm:doublepoleFalseAB} give new $q$-series representations of distinguished irreducible characters for the  $(1,2k+1)$ $N=1$ singlet vertex superalgebra in Neveu-Schwarz and Ramond sector, respectively.
\end{rem}

\section{An alternative approach to double pole identities}
\label{sec:alt}

In this part we present a different approach to double pole identities based  on $q$-hypergeometric summations
combined with  Andrews-Gordon identities (Theorem \ref{AG-id}) and $q$-series identities for 
false theta functions (Theorem \ref{false-id}). We also employ the relevant $q$-difference equations (\ref{AG-q-diff}) and (\ref{false-q-diff}).

Let us start with an identity (here $m \geq 0$):
\begin{equation} \label{false}
\sum_{n \geq 0} \frac{q^{(m+1)n}}{(q)_{n}^2}=\frac{1}{(q)_\infty} \sum_{n \geq 0} \frac{(q)_{m} q^{n^2+(m+1)n}}{(q)_n^2},
\end{equation}
an easy consequence of (\ref{Heine}) with $a,b \to 0$, $c\rightarrow q$ and $z=q^{m+1}$. This expression is essentially a difference of two partial theta functions and for $m=0$  it gives the Rogers' false theta function $\frac{1}{(q)^2_\infty} \sum_{n \geq 0} (-1)^n q^{n(n+1)/2}$.
We employ (\ref{false}) and include an additional summation variable to analyze $\dn_{2,2}(0,q)$ and $\dn_{2,3}(0,q)$:
\begin{align*}
& \sum_{m, n \geq 0} \frac{q^{mn+m+n}}{(q)^2_{m}(q)^2_{n}}
=\frac{1}{(q)_\infty}\sum_{n \geq 0} \frac{q^n}{(q)_{n}}  \sum_{m \geq 0} \frac{q^{m^2+(n+1)m}}{(q)_m^2} \\
&=\frac{1}{(q)_\infty} \sum_{m \geq 0} \frac{q^{m^2+m}}{(q)_m^2}\sum_{n \geq 0} \frac{q^{n+nm}}{(q)_n} =\frac{1}{(q)^2_\infty} \sum_{m \geq 0} \frac{q^{m^2+m}}{(q)_m}, 
\end{align*}
where in the last equality we use Euler's identity \eqref{eqn:euler1} with $z=q^{m+1}$. 
Along the same lines we get 
$$\sum_{m, n \geq 0} \frac{q^{mn+m+n+kn}}{(q)^2_{m}(q)^2_{n}}=\frac{1}{(q)^2_\infty} \sum_{m \geq 0} \frac{(q)_{m+k} q^{m^2+m}}{(q)^2_m},$$
which, after specialization $k=1$, gives 
$$\sum_{m,n \geq 0} \frac{q^{mn+m+2n}}{(q)^2_{m}(q)^2_{n}}=\frac{1}{(q)^2_\infty} \sum_{m \geq 0} \frac{(1-q^{m+1}) q^{m^2+m}}{(q)_m}.$$
This formula, the Rogers-Ramanujan recursion (\ref{RR-rec}) (specialized at $x=1$) together with the second Rogers-Ramanujan identity now gives (\ref{RR:non-vacuum}).

Next we consider the double pole series with three summation variables: $\dn_{3,3}(0,q)$, $\dn_{3,4}(0,q)$ and $\dn_{3,5}(0,q)$. We first compute
\begin{align*}
& \sum_{m,n,k \geq 0} \frac{q^{mn+nk+m+n+k}}{(q)_m^2 (q)_n^2 (q)_k^2}=\sum_{m \geq 0}  \frac{q^{m}}{(q)_m^2} \sum_{n,k \geq 0} \frac{q^{mn+nk+n+k}}{(q)_n^2 (q)_k^2} \\
&=\frac{1}{(q)^2_\infty} \sum_{m \geq 0}  \frac{q^{m}}{(q)_m^2}  \sum_{n \geq 0} \frac{(q)_{n+m}q^{n^2+n}}{(q)_n^2}=\frac{1}{(q)^2_\infty} \sum_{n \geq 0}  \frac{q^{n^2+n}}{(q)_n} \sum_{m \geq 0}  \frac{(q^{n+1})_{m} q^{m}}{(q)_m^2}=\frac{1}{(q)_\infty^3} \sum_{m,n \geq 0} \frac{q^{(m+n)^2+n^2+m+2n}}{(q)_n^2 (q)_m},
\end{align*}
where in the last equality we use (\ref{Jackson}) (with $a \to 0$, $b=q^{n+1}$, $c=z=q$), \eqref{eqn:pochA-B}, and we change summation variables. Completely analogously we get
$$\sum_{m,n,k \geq 0} \frac{q^{mn+nk+2m+n+k}}{(q)_m^2 (q)_n^2 (q)_k^2}=\frac{1-q}{(q)_\infty^3}\sum_{m,n \geq 0} \frac{q^{(m+n)^2+n^2+m+3n}}{(q)_n^2 (q)_m}.$$
Finally, 
\begin{align*}
& \sum_{m,n,k \geq 0} \frac{q^{mn+nk+m+2n+k}}{(q)_m^2 (q)_n^2 (q)_k^2}=\sum_{m \geq 0}  \frac{q^{m}}{(q)_m^2} \sum_{n,k \geq 0} \frac{q^{mn+nk+2n+k}}{(q)_n^2 (q)_k^2} =\frac{1}{(q)^2_\infty}  \sum_{m \geq 0}  \frac{q^{m}}{(q)_m^2}  \sum_{n \geq 0} \frac{(q)_{n+m+1}q^{n^2+n}}{(q)_n^2} \\
& =\frac{1}{(q)^2_\infty}\sum_{n \geq 0} \frac{(1-q^{n+1}) q^{n^2+n}}{(q)_n} \sum_{m \geq 0}  \frac{(q^{n+2})_{m}q^{m}}{(q)_m^2}=\frac{1}{(q)^3_\infty}\sum_{m, n \geq 0} \frac{(1-q^{n+m})^2 q^{(n+m)^2+n^2-m}}{(q)_n^2 (q)_m}
\end{align*}
where in the last equality we again use (\ref{Jackson}) (now with $a \to 0$, $b=q^{n+2}$, $c=z=q$), (\ref{eqn:pochA-B}), and we shift the summation variables.
These three identities combined with (\ref{false-rec}) (with $x=1$) and 
\begin{equation*}
\sum_{m, n \geq 0} \frac{ q^{(n+m)^2+n^2-m}}{(q)_n^2 (q)_m}=\sum_{m, n \geq 0} \frac{ q^{(n+m)^2+n^2}}{(q)_n^2 (q)_m}
+\sum_{m, n \geq 0} \frac{q^{(n+m)^2+n^2+2n+m}}{(q)_n^2 (q)_m}
\end{equation*}
give the following identities:
\begin{align*}
 (q)_\infty^3 \sum_{n_1,n_2,n_3 \geq 0} \frac{q^{n_1+n_2+n_3+n_1 n_2 +n_2 n_3}}{(q)_{n_1}^2 (q)_{n_2}^2 (q)_{n_3}^2} &= \sum_{n_1,n_2 \geq 0} \frac{q^{(n_1+n_2)^2+n_2^2+n_1+2n_2}}{(q)_{n_1}(q)_{n_2}^2} \\
 (q)_\infty^3 \sum_{n_1,n_2,n_3 \geq 0} \frac{(2-q^{n_1})q^{n_1+n_2+n_3+n_1 n_2 +n_2 n_3}}{(q)_{n_1}^2 (q)_{n_2}^2 (q)_{n_3}^2} &=\sum_{n_1,n_2 \geq 0} \frac{q^{(n_1+n_2)^2+n_2^2+n_2}}{(q)_{n_1}(q)_{n_2}^2}  \\
(q)_\infty^3 \sum_{n_1,n_2,n_3 \geq 0} \frac{(2-2q^{n_1}+q^{n_2})q^{n_1+n_2+n_3+n_1 n_2 +n_2 n_3}}{(q)_{n_1}^2 (q)_{n_2}^2 (q)_{n_3}^2} &=\sum_{n_1,n_2 \geq 0} \frac{q^{(n_1+n_2)^2+n_2^2}}{(q)_{n_1}(q)_{n_2}^2}.  \\
\end{align*}
Now the required statement (Theorem \ref{thm:doublepoleFalseAG}, $k=2$) follows 
from Theorem \ref{false-id}. We can proceed in this fashion to analyze $k$-fold summations, for $k \geq 4$, etc. We note that 
in this approach we naturally encounter more general $\dn$-type series 
	$$\sum\limits_{n_1,\dots,n_t\geq 0}\dfrac{(-w)_{n_1}q^{n_1n_2+\cdots+n_{t-1}n_t + a_1 n_1 + \cdots  +a_k n_k} }{(q)_{n_1}^2\cdots (q)_{n_t}^2}$$
where $a_i \in \mathbb{N}$. 
We plan to study new $q$-series identities for these series in our future publications.

\begin{rem}
Above, we have explicitly reduced our double-pole representations for Rogers-Ramanujan and false theta identities to their more well-known 
representations, namely, the right-hand side of \eqref{MFI:AG} with $k=1$ in the former case and \eqref{MFI:FT} with $k=2$ in the latter.
However, it is possible to prove double-pole representations for the Andrews-Gordon series in a different way
using a certain uniqueness property \cite{LepZhu}. 
One of us is currently investigating this \cite{Kan}.
\end{rem}

\bibliographystyle{abbrv}

\providecommand{\oldpreprint}[2]{\textsf{arXiv:\mbox{#2}/#1}}\providecommand{\preprint}[2]{\textsf{arXiv:#1
  [\mbox{#2}]}}

\end{document}